\documentclass{amsart}
\usepackage[colorlinks=true, allcolors=blue]{hyperref}
\usepackage{amsmath}
\usepackage{paralist}
\usepackage{amssymb}
\usepackage{amsthm}
\usepackage{amscd}
\usepackage{graphicx,mathrsfs}
\usepackage{fontenc}
\usepackage{enumitem}

\usepackage[paperwidth=7in, paperheight=10in, margin=.75in]{geometry}
\numberwithin{equation}{section}

\newtheorem{Thm}{Theorem}[section]
\newtheorem{Lem}{Lemma}[section]
\newtheorem{Cor}{Corollary}[section]
\newtheorem{Prop}{Proposition}[section]

\newtheorem{Def}{Definition}[section]
\newtheorem{Rem}{Remark}[section]

\begin{document}
\sloppy
\allowdisplaybreaks[4]
\title[Well-posedness of the degenerate master equation]{Well-posedness of the mean field game master equation on Carnot tori}

\author{Yiming Jiang, Yawei Wei, Yiyun Yang}
\address{School of Mathematical Sciences and LPMC\\ Nankai University\\ Tianjin 300071 China}
\email{ymjiangnk@nankai.edu.cn}
\address{School of Mathematical Sciences and LPMC\\ Nankai University\\ Tianjin 300071 China}
\email{weiyawei@nankai.edu.cn}
\address{School of Mathematical Sciences\\ Nankai University\\ Tianjin 300071 China}
\email{1120210035@mail.nankai.edu.cn}
\keywords{mean field game; master equation; Carnot groups; H\"{o}rmander vector fields; Carnot tori; degenerate parabolic equations}
\subjclass[2020]{35Q89, 35K65, 35R03}

\begin{abstract}
We study the master equation for a second-order mean field game on Carnot tori, which means the generic player can move periodically only along admissible trajectories given by the family of vector fields generating the Carnot group. As examples of sub-Riemannian manifolds, Carnot groups represent a type of non-commutative groups characterized by stratified Lie algebra structures. In order to obtain the well-posedness of the master equation, we analyze the properties of its solution by investigating a degenerate mean field game system for which there exists an equivalent characterization with the master equation. The main part of this paper lies in leveraging the regularity properties of solutions to two classes of linear degenerate parabolic equations and a class of linear degenerate coupled systems to derive the existence of solutions to the master equation. The research in this paper is motivated by \cite{19CDLL,24MMM}.
\end{abstract}

\maketitle
\section{Introduction}
In this paper, we study a kind of non-linear infinite-dimensional partial differential equation (PDE in short) defined on the space of probability measures, called master equation as follows:
\begin{equation}\label{ME}
\begin{cases}
-\partial_t U(t, x, m)-\Delta_{\mathcal{X}} U(t, x, m)+H\left(x, D_{\mathcal{X}} U(t, x, m)\right)\\
\quad-\int_{\mathbb{T}_{\mathbb{G}}} \Delta_{\mathcal{X}}^y\frac{\delta U}{\delta m}(t, x, m, y) d m(y)\\
\quad+\int_{\mathbb{T}_{\mathbb{G}}} D_{\mathcal{X}}^y \frac{\delta U}{\delta m} (t, x, m, y) \cdot D_p H\left(y, D_{\mathcal{X}} U(t, y, m)\right) d m(y)\\
=F(x, m),\quad \text{in } [0, T] \times \mathbb{T}_{\mathbb{G}} \times \mathcal{P}\left(\mathbb{T}_{\mathbb{G}}\right), \\
U(T, x, m)=G(x, m),\quad \text{in } \mathbb{T}_{\mathbb{G}} \times \mathcal{P}\left(\mathbb{T}_{\mathbb{G}}\right).
\end{cases}
\end{equation}
Here, $\mathbb{T}_{\mathbb{G}}$ denotes the torus in the Carnot group $\mathbb{G}=\left(\mathbb{R}^n,\circ\right)$, namely $\mathbb{T}_{\mathbb{G}}:=\mathbb{G}/\mathbb{Z}^{n}$, and $\mathcal{X}=\{X_1,X_2,\ldots,X_{n_1}\},n_1< n$ denotes the Jacobian generators of $\mathbb{G}$, which is a typical family of vector fields with an anisotropic structure satisfying the H\"{o}rmander condition, i.e.
\begin{equation*}
\operatorname{rank}(\operatorname{Lie}\{X_1,\ldots,X_{n_1}\}(x))=n,\,\text{ for any }x\in\mathbb{R}^n,
\end{equation*}
where $\operatorname{Lie}\{X_1,\ldots,X_{n_1}\}(x)$ denotes the Lie algebra induced by the given vector fields. Note that for a smooth vector field $X$ defined on some $n$-dimensional domain $\Omega$, it can be expressed in two equivalent forms:
$$
X=\sum\limits_{j=1}^n c_{j}(x)\partial_{x_j} \text{ or } X=\left(c_{1}(x),c_{2}(x),\ldots,c_{n}(x)\right)^{\top},\,x\in\Omega,
$$
for some smooth functions $\{c_{j}(x)\}_{j=1,\ldots,n}$. For further details on H\"{o}rmander vector fields, Carnot groups and Carnot tori, see Section \ref{Sec_2}. In addition, $\mathcal{P}\left(\mathbb{T}_{\mathbb{G}}\right)$ is the set of Borel probability measures on $\mathbb{T}_{\mathbb{G}}$, $\frac{\delta U}{\delta m}$ is the first order derivative of $U$ with respect to the measure $m$ (see Definition \ref{derivative w.r.t. m}), and $H: \mathbb{T}_{\mathbb{G}} \times \mathbb{R}^{n_1} \to \mathbb{R}$ is given by $H(x,p):=\frac{1}{2}|p|^2$. For any function $f:\mathbb{T}_{\mathbb{G}} \to \mathbb{R}$, we define the subgradient and the hypoelliptic operator associated with $\mathcal{X}$ respectively as
$$
D_{\mathcal{X}}f:=(X_1f,\ldots,X_{n_1}f)^{\top},\quad\Delta_{\mathcal{X}}f:=\sum_{i=1}^{n_1}X_i^2f.
$$
As for any vector-valued function $g:\mathbb{T}_{\mathbb{G}} \to \mathbb{R}^{n_1}$, the corresponding divergence is defined as
$$
\operatorname{div}_{\mathcal{X}}g=\sum_{i=1}^{n_1}X_ig.
$$
For the sake of distinction, the differential operators $D_{\mathcal{X}}^y$ and $\Delta_{\mathcal{X}}^y$ with superscript $y$ denote the operators acting on the variable $y$.

Heuristically, equation \eqref{ME} is a Hamilton-Jacobi-Bellman (HJB in short) equation on the space of measures, arising from the limit problem of a differential game with finitely many indistinguishable players, in which the dynamic of player $i$, with $1\leq i \leq N$, is driven by the stochastic diffusion process defined on the Carnot torus $\mathbb{T}_{\mathbb{G}}$, which means each player must periodically follow the horizontal curves with respect to the family of vector fields $\mathcal{X}$ generating the Carnot group $\mathbb{G}$, i.e.
\begin{equation*}
\begin{cases}
d Z_{s}^i=\sum_{k=1}^{n_1}\alpha^{i,k}(s,Z_s^i)X_k(Z_s^i) ds+\sqrt{2}\sum_{k=1}^{n_1}X_k(Z_s^i)d B_{s}^{i,k}, \\
Z_{t}^i=x^{i}=(x^{i,1},\ldots,x^{i,n})^{\top}, \text{ in } \mathbb{T}_{\mathbb{G}},
\end{cases}
\end{equation*}
where $X_k(Z_s^i)d B_{s}^{i,k}$ is the Stratonovich-type, $Z_{s}^i=(Z_{s}^{i,1},\ldots,Z_{s}^{i,n})^{\top}$ represents the state of the $i$-th player, $\alpha^i=(\alpha^{i,1},\ldots,\alpha^{i,n_1})^{\top}$ is the control chosen from some admissible domain $A_i\subseteq\mathbb{R}^{n_1}$, $\{B_{s}^{i}=(B_s^{i,1},\ldots,B_s^{i,n_1})^{\top}\}_{i=1,\ldots,N}$ are independent $n_1$-dimensional Brownian motions with coordinate independence, and $x^{i}$ is the given initial condition. Let $\boldsymbol{Z}_s=(Z_{s}^{1},\ldots,Z_{s}^{N})$ denote the collection of all players' states and the notation $\boldsymbol{x}$ indicates a vector of $\left(\mathbb{T}_{\mathbb{G}}\right)^N$ defined by $\boldsymbol{x}:=(x^1,\ldots,x^N)$. The player $i$ choose his own strategy in order to minimize the cost function
\begin{equation*}
J^{N,i}(t, \boldsymbol{x}, (\alpha^i)_{i=1,\ldots,N})=\mathbb{E}\left[\int_{t}^{T} \left(L\left(Z_{s}^i, \alpha_{s}^i\right)+F^{N,i}\left(\boldsymbol{Z}_{s}\right)\right) d s+G^{N,i}\left(\boldsymbol{Z}_{T}\right)\right],
\end{equation*}
where the Lagrangian $L\left(Z_{s}^i, \alpha_{s}^i\right):=\frac{1}{2}|\alpha_{s}^i|^2 $, while $F^{N,i}$ and $G^{N,i}$ are respectively the running cost and final cost function associated with the $i$-th player. Due to the symmetry of the game, we can suppose that $F^{N,i}$ and $G^{N,i}$ take the form
\begin{equation*}
F^{N,i}(\boldsymbol{x}) = F(x^i,m_{\boldsymbol{x}}^{N,i}),\quad G^{N,i}(\boldsymbol{x}) = G(x^i,m_{\boldsymbol{x}}^{N,i}),
\end{equation*}
where $F,G: \mathbb{T}_{\mathbb{G}} \times \mathcal{P}(\mathbb{T}_{\mathbb{G}}) \to \mathbb{R}$, and $m_{\boldsymbol{x}}^{N,i}:=\frac{1}{N-1}\sum\limits_{j \neq i}\delta_{x^j}$ is the empirical measure of all players except player $i$.

Now, denoting $\left(v^{N,i}\right)_{i=1,\ldots,N}$ as the value functions of the players, we say that the controls $\left(\hat{\alpha}^i\right)_{i=1,\ldots,N}$ provide a Nash equilibrium if the following inequality holds for all controls $\left(\alpha^i\right)_{i=1,\ldots,N}$ and for all $i$,
\begin{equation*}
v^{N,i}(t,\boldsymbol{x}):=J^{N,i}\left(t,\boldsymbol{x},\left(\hat{\alpha}^i\right)_{i=1,\ldots,N}\right) \leq J^{N,i}\left(t,\boldsymbol{x},\alpha^i,\left(\hat{\alpha}^j\right)_{j \neq i}\right).
\end{equation*}
By using the It\^{o}'s formula and the dynamic programming principle, we can deduce the HJB equations that $\left(v^{N,i}\right)_{i=1,\ldots,N}$ satisfy, namely the Nash system as follows:
\begin{equation}\label{Nash system}
\begin{cases}
-\partial_{t} v^{N,i}(t, \boldsymbol{x})-\sum\limits_{j=1}^{N} \Delta_{\mathcal{X}}^{x^j}v^{N,i}(t, \boldsymbol{x})+H\left(x^i,D_{\mathcal{X}}^{x^i} v^{N,i}(t,\boldsymbol{x})\right)\\
\quad-\sum\limits_{j \neq i}D_pH\left(x^j,D_{\mathcal{X}}^{x^j} v^{N,j}(t,\boldsymbol{x})\right) \cdot D_{\mathcal{X}}^{x^j} v^{N,i}(t,\boldsymbol{x})=F^{N,i}(\boldsymbol{x}),\\
\quad \text {in }[0,T] \times \left(\mathbb{T}_{\mathbb{G}}\right)^N, i \in\{1, \ldots, N\},\\
v^{N,i}(T,\boldsymbol{x})=G^{N,i}(\boldsymbol{x}), \quad \text {in } \left(\mathbb{T}_{\mathbb{G}}\right)^N.
\end{cases}
\end{equation}
Note that the Hamiltonian of the system is the Fenchel conjugate of the Lagrangian $L$, namely
\begin{equation*}
H(x,p):=\sup_{a \in A_i} \left(-a \cdot p-\frac{1}{2}|a|^2\right)=\frac{1}{2}|p|^2.
\end{equation*}
This leads to the optimal feedback strategies as
\begin{equation*}
\left(\hat{\alpha}^i(t,\boldsymbol{x})=-D_pH\left(x^i,D_{\mathcal{X}}^{x^i} v^{N,i}(t,\boldsymbol{x})\right)\right)_{i=1,\ldots,N}.
\end{equation*}
Assume that \eqref{Nash system} has a unique solution $\left(v^{N,i}\right)_{i=1,\ldots,N}$. Then for each $i\in \{1,\ldots,N\}$, $v^{N,i}$ is symmetric with respect to all the permutations on $\{1,\ldots,N\}\setminus\{i\}$ and for $i \neq j$, the $x^i$ in $v^{N,i}$ plays the same role as $x^j$ plays in $v^{N,j}$. Therefore, it is reasonable to expect that, as $N \to +\infty$,
\begin{equation*}
v^{N,i}(t,\boldsymbol{x}) \simeq U(t,x^i,m_{\boldsymbol{x}}^{N,i}),
\end{equation*}
where $U$ maps from $[0,T]\times \mathbb{T}_{\mathbb{G}} \times \mathcal{P}(\mathbb{T}_{\mathbb{G}})$ to $\mathbb{R}$. Analogous to the method employed in \cite[Proposition 6.1.1]{19CDLL} for computing the relevant derivatives, we derive
\begin{equation*}
D_{\mathcal{X}}^{x^j} v^{N,i}(t,\boldsymbol{x}) \simeq
\begin{cases}
D_{\mathcal{X}}U\left(t,x^i,m_{\boldsymbol{x}}^{N,i}\right), & \mbox{if } j=i, \\
\frac{1}{N-1}D_{\mathcal{X}}^{x^j}\frac{\delta U}{\delta m}\left(t,x^i,m_{\boldsymbol{x}}^{N,i},x^j\right), & \mbox{otherwise},
\end{cases}
\end{equation*}
and
\begin{equation*}
\Delta_{\mathcal{X}}^{x^j} v^{N,i}(t,\boldsymbol{x}) \simeq
\begin{cases}
\Delta_{\mathcal{X}}U\left(t,x^i,m_{\boldsymbol{x}}^{N,i}\right), & \mbox{if } j=i,\\
\frac{1}{N-1}\Delta_{\mathcal{X}}^{x^j}\frac{\delta U}{\delta m}\left(t,x^i,m_{\boldsymbol{x}}^{N,i},x^j\right)\\
\quad+\left(\frac{1}{N-1}\right)^2 \operatorname{Tr}\left[D_{\mathcal{X}}^{x^j,x^j}\frac{\delta^2 U}{\delta m^2}\right]\left(t,x^i,m_{\boldsymbol{x}}^{N,i},x^j,x^j\right), & \mbox{otherwise}.
\end{cases}
\end{equation*}
Substituting the above relations into the Nash system \eqref{Nash system} and then letting $N \to +\infty$, we thus obtain the master equation of the form in \eqref{ME}.

The mean field game (MFG in short) theory is typically employed to solve the differential game problems involving infinitely many small and indistinguishable players. This theory was first introduced in 2006 by Lasry and Lions \cite{06LL_I,06LL_II,07LL,08LLG}. Around the same time, Huang, Caines and Malham\'{e} \cite{06HCM} also established similar definitions. The limit problem in MFG theory essentially reduces to the analysis of a coupled system of PDEs, which consists of a backward HJB equation satisfied by the value function $u$ of individual players and a forward Fokker-Planck-Kolmogorov (FPK in short) equation satisfied by the distribution law $m$ of the population. It was established by Lions in his lectures at Coll\`{e}ge de France \cite{Li} that there exists an equivalent characterization between the solutions to the MFG system and the master equation. Consequently, the study of Nash equilibria in a MFG can be reduced to the analysis of one unique equation, namely the master equation. It is worth mentioning that the form of the MFG system considered in this paper is as follows:
\begin{equation}\label{MFG system}
\begin{cases}
-\partial_t u-\Delta_{\mathcal{X}}u+H\left(x,D_{\mathcal{X}}u(t,x)\right)=F(x, m(t)), & \mbox{in } [t_0,T] \times \mathbb{T}_{\mathbb{G}}, \\
\partial_t m-\Delta_{\mathcal{X}} m-\operatorname{div}_{\mathcal{X}}\left(m D_pH\left(x,D_{\mathcal{X}}u\right)\right)=0, & \mbox{in } [t_0,T] \times \mathbb{T}_{\mathbb{G}}, \\
u(T,x)=G(x, m(T)),\quad m(t_0)=m_0, & \mbox{in } \mathbb{T}_{\mathbb{G}},
\end{cases}
\end{equation}
which is composed of two degenerate parabolic equations.

In the last decade, different papers have investigated the master equation and presented the most critical topics such as existence and uniqueness; regularity results and convergence problem. For instance, from a probabilistic perspective, Carmona and Delarue \cite{14CD} interpreted the master equation as a decoupled field of infinite-dimensional forward-backward stochastic differential equations. Chassagneux et al. \cite{14CCD} proved for the first time the well-posedness of the first-order master equation using a probabilistic approach. Moreover, Gangbo et al. \cite{22GMMZ} provided the first global in time well-posedness result in the case of non-separable displacement monotone Hamiltonians and non-degenerate idiosyncratic noise. From an analytical perspective, Bensoussan, Frehse and Yam \cite{15BFY,17BFY} recharacterized the master eqution as a system of PDEs on $L^{2}$ spaces. Furthermore, Gangbo and \'{S}wi\c{e}ch \cite{15GS} proved a small time existence for the master equation in the first-order MFG (i.e. MFG without idiosyncratic noise). Cardaliaguet et al. \cite{19CDLL} provided some general and well-known results on the well-posedness of classical solutions to both first and second-order master equations, corresponding to the nonlocal MFGs without and with common noise, respectively. Additionally, we can refer to \cite{23JR} for some new findings and clarifications concerning the results in \cite{19CDLL}. In recent years, Ricciardi \cite{21Ri} studied the well-posedness of the MFG master equation in a framework of Neumann boundary condition. In addition to the above studies on classical solutions, researches on weak solutions are likewise important. Bertucci \cite{21Be_F,21Be_C} presented the notion of monotone solution to master equations in the case of finite or continuous state space. Meanwhile, Cardaliaguet and Souganidis \cite{21CS} introduced a notion of weak solution to the MFG master equation without idiosyncratic noise. In this paper, we investigate the first-order master equation for a class of degenerate MFGs. Using the analytical approach, we establish an equivalent characterization between the master equation \eqref{ME} and the degenerate MFG system \eqref{MFG system}, and finally prove the well-posedness and regularity of the solution in a space-non-isotropic framework.

Compared with the case where PDEs satisfy the uniform ellipticity condition (i.e., non-degenerate), the researches on the degenerate MFG systems have emerged more recently. In the context of hypoelliptic MFGs, Dragoni and Feleqi \cite{18DF} studied the ergodic MFG systems with H\"{o}rmander diffusion, which is a class of systems of degenerate elliptic PDEs satisfying H\"{o}rmander condition. Later, Feleqi et al. \cite{20FGT} considered hypoelliptic MFG systems with quadratic Hamiltonians and proved the existence and uniqueness of the solution using the technique of Hopf-Cole transform. Furthermore, Mimikos-Stamatopoulos \cite{21Mi} considered the hypoelliptic MFG system with local coupling, driven by the FPK diffusion; Jiang et al. \cite{23JRWX} proved the global well-posedness of the hypoelliptic MFG systems driven by the Grushin diffusion. While Mannucci et al. \cite{24MMM} obtained the short-time existence of classical solutions to the MFG system defined on Carnot groups. As for more general degenerate MFGs, Cardaliaguet et al. \cite{15CGPT} proved existence and uniqueness of a suitably defined weak solution to the degenerate parabolic MFG system with local coupling. Further, Ferreira et al. \cite{21FGT} extended the existence of weak solutions to a wide class of time-dependent degenerate MFG systems. Moreover, Cardaliaguet et al. \cite{22CSS} built a new notion of probabilistically weak solutions for the MFG systems with common noise and degenerate idiosyncratic noise. However, researches on degenerate MFG master equations are fewer, and is limited to the totally degenerate case. For instance, 
Bansil et al. \cite{23BMM} constructed global in time classical solutions to degenerate MFG master equations without idiosyncratic noise. 
More recently, Bansil and M\'{e}sz\'{a}ros \cite{24BM} 
established new global well-posedness results for the associated master equations in the case of potentially degenerate idiosyncratic noise. In this paper, we investigate the non-totally degenerate MFG master equation, namely by addressing the periodic MFG problem on Carnot groups. Such groups are non-commutative groups with specific structures and also typical examples of sub-Riemannian manifolds.

\noindent
\textbf{Notations.} Throughout this paper, let $T > 0$ be a fixed finite time. Note that the positive constant, typically denoted by $C$, may take distinct values across different lines. By convention, we let $c>0$ denote a constant dependent on $\mathbb{G}$; when $c$ further depends on $f_1,\ldots,f_k$, we write it as $c_{f_1,\ldots,f_k}$.

Let $\alpha_{i}$ be the homogeneous degree of $X_{i}$ (in fact, $\alpha_{i}=1$ if $i\in \{1,\ldots,n_1\}$). For any $k\in\mathbb{N}$ and multi-index $I=\left(i_1,\ldots,i_k\right), i_j \in \{1,\ldots,n_1\},j\in\{1,\ldots,k\}$ with the length $|I|:=\sum_{j=1}^{n}\alpha_{i_j}$, we define
\begin{equation*}
X_I:=
\begin{cases}
X_{i_1} \cdots X_{i_k}, & \mbox{if } \dim(I)\geq1, \\
\mathrm{Id}, & \mbox{if } \dim(I)=0,
\end{cases}
\end{equation*}
where $\dim(I)$ is the dimension of vector $I$, i.e. $\dim(I)=k$.

For any interval $D \subseteq\mathbb{R}$ and $n$-dimensional open domain $\Omega$, we define the space
\begin{equation*}
B\left(D;C_{\mathcal{X}}^{k+\alpha}\left(\Omega\right)\right):=\left\{\phi:D\to C_{\mathcal{X}}^{k+\alpha}\left(\Omega\right)~\bigg|~ \sup_{t\in D}\|\phi(t,\cdot)\|_{C_{\mathcal{X}}^{k+\alpha}\left(\Omega\right)}<+\infty\right\}
\end{equation*}  
for any $k\in\mathbb{N}$, $\alpha\in(0,1]$, where $C_{\mathcal{X}}^{k+\alpha}$ is the non-isotropic H\"{o}rder space defined in \eqref{non-isotropic Holder spaces_Def} below, with the H\"{o}lder norm defined in \eqref{Holder norm}. Also, we define the function space
$$
C_{\mathcal{X}}^{1,2}(D\times\Omega):=\left\{u:D\times\Omega\to\mathbb{R}~|~\partial_{t}u, X_Iu\in C(D\times\Omega),\,\forall\,|I|\leq 2 \right\}.
$$

Denote $d_{cc}$ as the Carnot-Carath\'{e}odory distance induced by the vector fields $\mathcal{X}$ (see Definition \ref{Def_CC distance} below). For $x \in \Omega$, the $d_{cc}$-ball is then defined as
$$
B_\delta(x)=\{y \in \Omega:d_{cc}(x,y)<\delta\}.
$$

Let $\mathcal{P}\left(\Omega\right)$ be the set of Borel probability measures on $\Omega$, endowed with the Kantorovich-Rubinstein distance
$$
d_1\left(m, m'\right):=\sup_{[\phi]_{C_{\mathcal{X}}^{0+1}\left(\Omega\right)}\leq 1} \int_{\Omega} \phi(y) d\left(m-m'\right)(y),
$$
where the supremum is taken over all $d_{cc}$-Lipschitz continuous maps $\phi: \Omega \to \mathbb{R}$ with the Lipschitz constant bounded by $1$. It can be known from \cite[Section 5.1]{18CD} that $d_1$ is well-defined and this distance metricizes the weak convergence of measures. Also, we have the following equivalent definition:
$$
d_1\left(m, m'\right):=\inf_{\gamma \in \Pi\left(m, m'\right)} \left[\int_{\Omega \times \Omega} d_{cc}(x,y) d\gamma(x,y)\right],
$$
where $\Pi\left(m, m'\right)$ is the set of Borel joint probability measures on $\Omega \times \Omega$ such that the marginal probability measures are respectively $m$ and $m'$.

When the probability measure $m$ is absolutely continuous with respect to the Lebesgue measure, we reuse the notation $m$ to denote its density, namely, we write $m: \Omega \ni x \mapsto m(x) \in \mathbb{R}_{+}$. In addition, consider $(m(t))_{t \in[0, T]}$ as the flows of time dependent measures, with $m(t) \in \mathcal{P}\left(\Omega\right)$ for any $t \in[0, T]$. For each $t \in[0, T]$, if $m(t)$ is absolutely continuous with respect to the Lebesgue measure, we can identify $m(t)$ with its density and denote $m:[0, T] \times \Omega \ni(t, x) \mapsto m(t, x) \in \mathbb{R}_{+}$ as the collection of the densities.

Let us introduce the definition of the first order derivative of a function with respect to the measure.
\begin{Def}[see {\cite[Definition 2.2.1]{19CDLL}}]\label{derivative w.r.t. m}
Let $U:\mathcal{P}\left(\Omega\right)\to\mathbb{R}$. We say that $U$ is $C^1$ if there exists a continuous map $\frac{\delta U}{\delta m}: \mathcal{P}\left(\Omega\right) \times \Omega
\to \mathbb{R}$ such that, for any $m, m^{\prime} \in \mathcal{P}\left(\Omega\right)$,
$$
\lim _{s \to 0^{+}} \frac{U\left((1-s) m+s m^{\prime}\right)-U(m)}{s}=\int_{\Omega} \frac{\delta U}{\delta m}(m, y) d\left(m^{\prime}-m\right)(y) .
$$
\end{Def}
It is noteworthy that $\frac{\delta U}{\delta m}$ is defined up to an additive constant. To ensure uniqueness, we impose the following normalization condition:
$$
\int_{\Omega} \frac{\delta U}{\delta m}(m, y) d m(y)=0 .
$$
The relevant integral form is that, for any $m, m^{\prime} \in \mathcal{P}\left(\Omega\right)$,
\begin{equation*}
U \left(m^{\prime}\right)-U(m) =\int_0^1 \int_{\Omega} \frac{\delta U}{\delta m}\left((1-s) m+s m^{\prime}, y\right) d\left(m^{\prime}-m\right)(y) d s .
\end{equation*}

\noindent
\textbf{Assumptions.} Let $k\in\{2,3,\ldots\}$ and $\alpha\in(0,1)$. We make the following assumptions:
\begin{enumerate}[label=\bfseries{A\arabic*)}, leftmargin=*, itemindent=2.3em]
\item $F:\mathbb{T}_{\mathbb{G}} \times \mathcal{P}(\mathbb{T}_{\mathbb{G}}) \to \mathbb{R}$ satisfies, there exists a constant $c_F>0$ such that for any $m,m' \in \mathcal{P}\left(\mathbb{T}_{\mathbb{G}}\right)$ and any $\rho \in C_{\mathcal{X}}^{-(k-1+\alpha)}\left(\mathbb{T}_{\mathbb{G}}\right)$,
\begin{equation}\label{monotonicity_F}
\int_{\mathbb{T}_{\mathbb{G}}}\left(F(x, m)-F\left(x, m^{\prime}\right)\right) d\left(m-m^{\prime}\right)(x) \geq 0,
\end{equation}
\begin{equation}\label{property_F}
\left\langle \rho,\frac{\delta F}{\delta m}(\cdot,m)(\rho) \right\rangle \geq 0,
\end{equation}
where $\frac{\delta F}{\delta m}(x,m)(\rho):=\left\langle \rho,\frac{\delta F}{\delta m}(x,m,\cdot) \right\rangle$, and
\begin{equation*}
\sup _{m \in \mathcal{P}(\mathbb{T}_{\mathbb{G}})}\left(\|F(\cdot, m)\|_{C_{\mathcal{X}}^{k-2+\alpha}\left(\mathbb{T}_{\mathbb{G}}\right)} +\left\|\frac{\delta F}{\delta m}(\cdot,m,\cdot)\right\|_{C_{\mathcal{X},y}^{k-1+\alpha}\left(\mathbb{T}_{\mathbb{G}};C_{\mathcal{X},x}^{k-1+\alpha}\left(\mathbb{T}_{\mathbb{G}}\right)\right)}\right) +\operatorname{Lip}\left(\frac{\delta F}{\delta m}\right) \leq c_F,
\end{equation*}
where
\begin{equation*}
\operatorname{Lip}\left(\frac{\delta F}{\delta m}\right):=\sup _{\substack{m_1,m_2 \in \mathcal{P}(\mathbb{T}_{\mathbb{G}})\\ m_1 \neq m_2}}\frac{\left\|\frac{\delta F}{\delta m}\left(\cdot,m_1,\cdot\right)-\frac{\delta F}{\delta m}\left(\cdot,m_2,\cdot\right)\right\|_{C_{\mathcal{X},y}^{k-1+\alpha}\left(\mathbb{T}_{\mathbb{G}};C_{\mathcal{X},x}^{k-1+\alpha}\left(\mathbb{T}_{\mathbb{G}}\right)\right)}}{d_1\left(m_1, m_2\right)};
\end{equation*}
\label{assum1}
\item $G:\mathbb{T}_{\mathbb{G}} \times \mathcal{P}(\mathbb{T}_{\mathbb{G}}) \to \mathbb{R}$ satisfies, there exists a constant $c_G>0$ such that for any $m,m' \in \mathcal{P}\left(\mathbb{T}_{\mathbb{G}}\right)$ and any $\rho \in C_{\mathcal{X}}^{-(k-1+\alpha)}\left(\mathbb{T}_{\mathbb{G}}\right)$,
\begin{equation}\label{monotonicity_G}
\int_{\mathbb{T}_{\mathbb{G}}}\left(G(x, m)-G\left(x, m^{\prime}\right)\right) d\left(m-m^{\prime}\right)(x) \geq 0,
\end{equation}
\begin{equation}\label{property_G}
\left\langle \rho,\frac{\delta G}{\delta m}(\cdot,m)(\rho) \right\rangle \geq 0,
\end{equation}
where $\frac{\delta G}{\delta m}(x,m)(\rho):=\left\langle \rho,\frac{\delta G}{\delta m}(x,m,\cdot) \right\rangle$, and
$$
\sup_{m \in \mathcal{P}(\mathbb{T}_{\mathbb{G}})}\left(\|G(\cdot, m)\|_{C_{\mathcal{X}}^{k+\alpha}\left(\mathbb{T}_{\mathbb{G}}\right)} +\left\|\frac{\delta G}{\delta m}(\cdot,m,\cdot)\right\|_{C_{\mathcal{X},y}^{k-1+\alpha}\left(\mathbb{T}_{\mathbb{G}};C_{\mathcal{X},x}^{k+\alpha}\left(\mathbb{T}_{\mathbb{G}}\right)\right)}\right) +\operatorname{Lip}\left(\frac{\delta G}{\delta m}\right) \leq c_G,
$$
where
$$
\operatorname{Lip}\left(\frac{\delta G}{\delta m}\right):=\sup _{\substack{m_1,m_2 \in \mathcal{P}(\mathbb{T}_{\mathbb{G}})\\ m_1 \neq m_2}}\frac{\left\|\frac{\delta G}{\delta m}\left(\cdot,m_1,\cdot\right)-\frac{\delta G}{\delta m}\left(\cdot,m_2,\cdot\right)\right\|_{C_{\mathcal{X},y}^{k-1+\alpha}\left(\mathbb{T}_{\mathbb{G}};C_{\mathcal{X},x}^{k+\alpha}\left(\mathbb{T}_{\mathbb{G}}\right)\right)}}{d_1\left(m_1, m_2\right)}.
$$
Here, the definition of norm $\|\cdot\|_{C_{\mathcal{X},y}^{k_1+\alpha_1}\left(\mathbb{T}_{\mathbb{G}};C_{\mathcal{X},x}^{k_2+\alpha_2}\left(\mathbb{T}_{\mathbb{G}}\right)\right)}$ for any $k_1,k_2 \in \mathbb{N}$ and $\alpha_1,\alpha_2 \in(0,1]$ can be found in \eqref{Holder norm_two space variables} below.
\label{assum2}
\item $H:\mathbb{T}_{\mathbb{G}}\times\mathbb{R}^{n_1}\to\mathbb{R}$ is smooth, globally Lipschitz continuous, and satisfies the coercivity condition:
\begin{equation*}
c_{H}^{-1}I_{n_1 \times n_1} \leq D_{pp}^2 H(x,p)\leq c_{H}I_{n_1 \times n_1},\,(x,p)\in\mathbb{T}_{\mathbb{G}}\times\mathbb{R}^{n_1}
\end{equation*}
for some constant $c_{H}>0$.\label{assum3}
\end{enumerate}
\begin{Rem}
The assumptions \eqref{property_F} and \eqref{property_G} are respectively stronger than the monotonicity conditions \eqref{monotonicity_F} and \eqref{monotonicity_G}. In fact, the latter cannot imply the former, as we can find a counter-example in \cite[Remark 2.23]{23JR}.
\end{Rem}

Before giving the main results, let us state the concept of the solution to the master equation.
\begin{Def}\label{Def_ME sol.}
Let $\alpha\in(0,1)$. We say that a map $U:[0, T] \times \mathbb{T}_{\mathbb{G}} \times \mathcal{P}\left(\mathbb{T}_{\mathbb{G}}\right) \to \mathbb{R}$ is a solution to the master equation \eqref{ME} if
\begin{enumerate}[label=(\arabic*), leftmargin=*, itemindent=1.7em]
\item $U\in C\left([0,T]\times\mathbb{T}_{\mathbb{G}}\times \mathcal{P}(\mathbb{T}_{\mathbb{G}})\right)$ and $U(t,x,m)$ is of class $C^{1}$ in $t$ uniformly with respect to $(x,m)$ and class $C_{\mathcal{X}}^{2+\alpha}$ in $x$ uniformly with respect to $(t,m)$;
\item $U$ is of class $C^1$ in $m$, where the first-order derivative
    $$
    [0, T] \times \mathbb{T}_{\mathbb{G}} \times \mathcal{P}\left(\mathbb{T}_{\mathbb{G}}\right) \times \mathbb{T}_{\mathbb{G}} \ni (t, x, m, y)\mapsto\frac{\delta U}{\delta m}(t,x,m,y)
    $$
    is continuous in all its variables, and $\frac{\delta U}{\delta m}$ is of class $C_{\mathcal{X}}^1$ in $x$ and class $C_{\mathcal{X}}^2$ in $y$ with the derivatives being continuous in all the variables;
\item $U$ satisfies the master equation \eqref{ME}.
\end{enumerate}
\end{Def}
Under the assumptions outlined above, we intend to investigate the existence and uniqueness of the solution to the master equation \eqref{ME}. The following is the main result.
\begin{Thm}[Well-posedness of the master equation]\label{Thm_ME wellposed regularity}
Let assumptions \ref{assum1}-\ref{assum2} hold with $k\in\{3,4,\ldots\}$ and $H(x,p)=\frac{1}{2}\left|p\right|^2$. Then there exists a unique solution U to the master equation \eqref{ME} in the sense of Definition \ref{Def_ME sol.}.

Moreover, the derivative $\frac{\delta U}{\delta m}$ satisfies
\begin{equation*}
\sup_{(t,m) \in [0,T] \times \mathcal{P}(\mathbb{T}_{\mathbb{G}})}\left\|\frac{\delta U}{\delta m}\left(t, \cdot, m, \cdot\right)\right\|_{C_{\mathcal{X},y}^{k-1+\alpha}\left(\mathbb{T}_{\mathbb{G}};C_{\mathcal{X},x}^{k+\alpha}\left(\mathbb{T}_{\mathbb{G}}\right)\right)} \leq C,
\end{equation*}
and $\frac{\delta U}{\delta m}$ and its Lie derivatives with respect to $(x,y)$ are continuous on $[0,T] \times \mathbb{T}_{\mathbb{G}} \times \mathcal{P}\left(\mathbb{T}_{\mathbb{G}}\right) \times \mathbb{T}_{\mathbb{G}}$. Furthermore,
\begin{align*}
& \left\|U\left(t, \cdot, m_1\right)-U\left(t, \cdot, m_2\right)-\int_{\mathbb{T}_{\mathbb{G}}} \frac{\delta U}{\delta m}\left(t, \cdot, m_2, y\right) d\left(m_1-m_2\right)(y)\right\|_{C_\mathcal{X}^{k+\alpha}(\mathbb{T}_{\mathbb{G}})} \\
\leq & C d_{1}^{2}\left(m_1,m_2\right).
\end{align*}
Here, the constants $C>0$ depend on $\mathbb{G}$, $\alpha$, $k$, $T$, $c_{F}$ and $c_{G}$ only.
\end{Thm}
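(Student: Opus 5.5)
We follow the strategy of \cite{19CDLL}, adapted to the sub-Riemannian setting of \cite{24MMM}. We define $U$ through the MFG system \eqref{MFG system} and then check that it solves \eqref{ME}.

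\textbf{Step 1: definition of $U$.} For every $(t_0,m_0)\in[0,T]\times\mathcal{P}(\mathbb{T}_{\mathbb{G}})$, we solve \eqref{MFG system} with $H=\frac12|p|^2$. Existence comes from a Schauder fixed point on $m\in C([t_0,T];\mathcal{P}(\mathbb{T}_{\mathbb{G}}))$. The ingredients are the Hopf--Cole transform $w=e^{-u/2}$, which linearizes the HJB equation into $-\partial_t w-\Delta_{\mathcal{X}}w=-\frac12 F w$, and the non-isotropic Schauder estimates for $\partial_t-\Delta_{\mathcal{X}}$ on $\mathbb{T}_{\mathbb{G}}$. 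These give $\sup_t\|u(t)\|_{C^{k+\alpha}_{\mathcal{X}}}\le C$ uniformly in $(t_0,m_0)$. Uniqueness follows from the Lasry--Lions duality argument, using \eqref{monotonicity_F}, \eqref{monotonicity_G} and strict convexity of $H$. We then set $U(t_0,x,m_0):=u(t_0,x)$. The Lipschitz dependence of $(u,m)$ on $m_0$ in $d_1$ follows from the same duality argument combined with the linear estimates, and it gives continuity of $U$.

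\textbf{Step 2: the linearized system and $\frac{\delta U}{\delta m}$.} For $\mu_0\in C^{-(k-1+\alpha)}_{\mathcal{X}}$ we study the linear coupled system
\begin{equation*}
\begin{cases}
-\partial_t v-\Delta_{\mathcal{X}}v+D_{\mathcal{X}}u\cdot D_{\mathcal{X}}v=\frac{\delta F}{\delta m}(x,m(t))(\mu(t)),\\
\partial_t\mu-\Delta_{\mathcal{X}}\mu-\operatorname{div}_{\mathcal{X}}(\mu D_{\mathcal{X}}u)-\operatorname{div}_{\mathcal{X}}(mD_{\mathcal{X}}v)=0,\\
v(T)=\frac{\delta G}{\delta m}(x,m(T))(\mu(T)),\quad \mu(t_0)=\mu_0.
\end{cases}
\end{equation*}
Well-posedness and the bound $\sup_t\|v(t)\|_{C^{k+\alpha}_{\mathcal{X}}}\le C\|\mu_0\|_{C^{-(k-1+\alpha)}_{\mathcal{X}}}$ rest on three inputs: Schauder estimates for the degenerate backward equation; duality estimates in negative H\"older spaces $C^{-(k-1+\alpha)}_{\mathcal{X}}$ for the forward equation with a divergence source; and the a priori energy identity $\frac{d}{dt}\langle v,\mu\rangle$. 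That identity uses \eqref{property_F}, \eqref{property_G} and $D^2_{pp}H=I$ to control $\int m|D_{\mathcal{X}}v|^2$. A Leray--Schauder / continuity-method argument then closes the existence proof. By linearity, $v(t_0,x)=\langle\mu_0,K(t_0,x,m_0,\cdot)\rangle$, and we define $\frac{\delta U}{\delta m}:=K$. Taking $\mu_0=X_I\delta_y$ with $|I|\le k-1$ yields the $C^{k-1+\alpha}_{\mathcal{X},y}(C^{k+\alpha}_{\mathcal{X},x})$ bound.

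\textbf{Step 3: the second-order expansion.} Take the difference $(u_1-u_2-v,\,m_1-m_2-\mu)$ with $\mu_0=m_1-m_2$. It solves the same linear system, with source terms that are quadratic in $(u_1-u_2,m_1-m_2)$: the term $\frac12|D_{\mathcal{X}}(u_1-u_2)|^2$, the remainder of $F$ controlled by $\operatorname{Lip}(\frac{\delta F}{\delta m})d_1^2$, and $\operatorname{div}_{\mathcal{X}}((m_1-m_2)D_{\mathcal{X}}(u_1-u_2))$. Applying the Step 2 estimate with sources, together with the Lipschitz bounds from Step 1, gives the $Cd_1^2$ bound in $C^{k+\alpha}_{\mathcal{X}}$. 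This shows that $K$ is indeed the derivative in the sense of Definition \ref{derivative w.r.t. m}. Continuity of $K$ and its Lie derivatives in all variables follows by the same stability argument.

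\textbf{Step 4: the equation and uniqueness.} For smooth $m_0$, use $U(t_0+h,x,m(t_0+h))=u(t_0+h,x)$ and expand with Step 3. Since $m$ solves the FPK equation, we get
\begin{equation*}
\partial_tU+\int\Delta^y_{\mathcal{X}}\tfrac{\delta U}{\delta m}\,dm-\int D^y_{\mathcal{X}}\tfrac{\delta U}{\delta m}\cdot D_{\mathcal{X}}U\,dm=\partial_tu .
\end{equation*}
Combining this with the HJB equation gives \eqref{ME}, and density plus continuity extend it to all $m_0$. For uniqueness, given any solution $V$, solve the FPK equation with drift $-D_{\mathcal{X}}V(t,\cdot,\tilde m(t))$ to produce $\tilde m$. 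Then $\tilde u(t,x):=V(t,x,\tilde m(t))$ solves the HJB equation, so $(\tilde u,\tilde m)$ solves \eqref{MFG system}, and uniqueness of that system gives $V=U$.

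\textbf{Main obstacle.} The hardest part will be Step 2: closing the linearized system's estimates in non-isotropic negative-order spaces. We need the commutator structure of the $X_i$ when differentiating the equations, and we need to handle the distributional data $\mu_0=X_I\delta_y$, for which no Euclidean Fourier tools are available. To do this we plan to rely on heat-kernel estimates for $\Delta_{\mathcal{X}}$ on $\mathbb{T}_{\mathbb{G}}$ and the Schauder theory of Section \ref{Sec_2}.
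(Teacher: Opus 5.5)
Your proposal is correct and follows essentially the same route as the paper. The shared steps are: a Schauder fixed point with the Hopf--Cole transform and Lasry--Lions monotonicity for the MFG system; Lipschitz dependence on $m_0$; Leray--Schauder for the linearized system, closed by the energy identity using \eqref{property_F}--\eqref{property_G}; the kernel $K=\frac{\delta U}{\delta m}$ built from $\delta_y$ and $X_I^*\delta_y$; the $Cd_1^2$ remainder from the quadratic sources; the equation derived along smooth flows and extended by density; and uniqueness by rebuilding an MFG solution from $V$ through a nonlinear FPK equation. The only variation is cosmetic: you differentiate $U$ along $m(t)$ using the second-order expansion, which is legitimate for smooth $m_0$ since then $d_1(m(t_0+h),m_0)=O(h)$, whereas the paper uses the integral form of $\frac{\delta U}{\delta m}$ along $(1-s)m(t_0)+sm(t_0+h)$.
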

Let us outline the idea of the proof for the theorem. To prove uniqueness, we construct a solution to the MFG system \eqref{MFG system} from a solution to the master equation \eqref{ME}. The uniqueness of the solution to the master equation \eqref{ME} can then be implied by the uniqueness of the solution to the MFG system \eqref{MFG system} (see Proposition \ref {Prop_MFG system wellposed regularity} below). To prove existence, we conversely consider the unique solution $(u,m) \in C_{\mathcal{X}}^{1,2}\left([t_0,T]\times\mathbb{T}_{\mathbb{G}}\right) \times C\left([t_0,T];\mathcal{P}(\mathbb{T}_{\mathbb{G}})\right)$ to the MFG system \eqref{MFG system} with initial condition $m(t_0)=m_0$ for any $\left(t_0,m_0\right)\in[0,T]\times\mathcal{P}\left(\mathbb{T}_{\mathbb{G}}\right)$, then we define
\begin{equation}\label{U_Def}
U\left(t_0, x, m_0\right):=u\left(t_0, x\right).
\end{equation}
The existence can be obtained by proving that $U$ is a solution to the master equation \eqref{ME}. In order to do this, we will establish some regularity results of $U$ in Propositions \ref{Prop_MFG system wellposed regularity}-\ref{Prop_U C^1 w.r.t. m} that follow. More precisely, in Proposition \ref{Prop_MFG system wellposed regularity}, we have an insight into properties of the solution to the MFG system \eqref{MFG system}, which are uniformly in $(t_0,m_{0})$. Thus we can get the Schauder estimates of $U$ with respect to $x$ and the differentiability of $U$ in $x$. In addition, we also need to prove the Lipschitz property of $U$ with respect to the measure, which can be seen in Proposition \ref{Prop_Lip. ctn. of U}. Furthermore, we obtain the $C^1$ differentiability of $U$ with respect to the measure in Proposition \ref{Prop_U C^1 w.r.t. m}. 
In the following way, we give the statements and the idea of proof for each proposition.
\begin{Prop}[Regularity results of $U$ in $x$]\label{Prop_MFG system wellposed regularity}
Let assumptions \ref{assum1}-\ref{assum2} hold and $H(x,p)=\frac{1}{2}\left|p\right|^2$. Then, for any initial condition $\left(t_0, m_0\right) \in[0, T] \times \mathcal{P}\left(\mathbb{T}_{\mathbb{G}}\right)$, the MFG system (\ref{MFG system}) has a unique solution $(u, m) \in C_{\mathcal{X}}^{\frac{2+\alpha}{2},2+\alpha}\left([t_0,T]\times\mathbb{T}_{\mathbb{G}}\right) \times C^{\frac{1}{2}}\left([t_0,T];\mathcal{P}(\mathbb{T}_{\mathbb{G}})\right)$, satisfying
\begin{equation}\label{MFG_regularity}
\sup _{t_1 \neq t_2} \frac{d_1\left(m\left(t_1\right), m\left(t_2\right)\right)}{\left|t_1-t_2\right|^{\frac{1}{2}}}+\left\|u\right\|_{C^{\frac{\alpha}{2},k+\alpha}_{\mathcal{X}}\left([t_0,T]\times\mathbb{T}_{\mathbb{G}}\right)} +\left\|u\right\|_{C^{\frac{2+\alpha}{2},2+\alpha}_{\mathcal{X}}\left([t_0,T]\times\mathbb{T}_{\mathbb{G}}\right)} \leq C,
\end{equation}
where the constant $C>0$ does not depend on $\left(t_0, m_0\right)$. Here, for any $k\in\mathbb{N}$, the space $C_{\mathcal{X}}^{\frac{\alpha}{2},k+\alpha}$ is defined in \eqref{time-space-Holder space_Def}, and $C^{\frac{k+\alpha}{2},k+\alpha}_{\mathcal{X}}$ is defined in \eqref{P-Holder space_Def} below.

Moreover, if $m_0$ is absolutely continuous with respect to the Lebesgue measure and has a smooth positive density, then $m\in C_{\mathcal{X}}^{\frac{2+\alpha}{2},2+\alpha}\left([t_0,T]\times\mathbb{T}_{\mathbb{G}}\right)\cap C^{\frac{\alpha}{2},k+\alpha}_{\mathcal{X}}\left([t_0,T]\times\mathbb{T}_{\mathbb{G}}\right)$ and $m>0$.

Furthermore, the solution is stable: if $m_0^{i} \to m_0$ in $\mathcal{P}\left(\mathbb{T}_{\mathbb{G}}\right)$ as $i\to+\infty$, then the corresponding solutions $\left(u^{i},m^{i}\right) \to (u,m)$ in $C_{\mathcal{X}}^{1,2}\left([t_0,T]\times\mathbb{T}_{\mathbb{G}}\right) \times C\left([t_0,T];\mathcal{P}(\mathbb{T}_{\mathbb{G}})\right)$ as $i\to+\infty$.

Finally, let $U$ be the function defined in \eqref{U_Def}, then $U$ satisfies
$$
\sup _{t_0 \in[0, T]} \sup _{m_0 \in \mathcal{P}\left(\mathbb{T}_{\mathbb{G}}\right)}\|U(t_0, \cdot, m_0)\|_{C^{k+\alpha}_{\mathcal{X}}\left(\mathbb{T}_{\mathbb{G}}\right)} \leq C,
$$
where $C>0$ depends on $\mathbb{G}$, $\alpha$, $k$, $T$, $c_F$ and $c_G$ only. Moreover, $U(t_0,\cdot,m_0)\in C_{\mathcal{X}}^2\left(\mathbb{T}_{\mathbb{G}}\right)$ with $U$ and its Lie derivatives being continuous in $(t_0,x,m_0)$.
\end{Prop}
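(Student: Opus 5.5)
The natural route is to exploit $H(x,p)=\frac12|p|^2$ through the Hopf--Cole transform, as in \cite{20FGT}, and then run a Schauder fixed point on the flow of measures, as in \cite{24MMM}. The two degenerate parabolic equations in \eqref{MFG system} then become linear hypoelliptic equations on $\mathbb{T}_{\mathbb{G}}$. I would use the non-isotropic Schauder theory for $\partial_t-\Delta_{\mathcal{X}}$ on Carnot groups, periodized to the torus, which I take as available from Section \ref{Sec_2}.

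\textbf{Existence.} Fix $(t_0,m_0)$ and consider the set $\mathcal{C}$ of flows $\mu\in C^{1/2}([t_0,T];\mathcal{P}(\mathbb{T}_{\mathbb{G}}))$ with H\"older constant at most $L$. The set $\mathcal{C}$ is convex and, by Ascoli, compact in $C([t_0,T];\mathcal{P})$. The fixed-point map is built in three steps.
\begin{itemize}
\item Given $\mu\in\mathcal{C}$, set $w=e^{-u/2}$, so that the HJB equation becomes the linear equation $-\partial_t w-\Delta_{\mathcal{X}}w+\frac12F(x,\mu(t))w=0$ with $w(T)=e^{-G(\cdot,\mu(T))/2}$.
\item By the comparison principle, $w$ is bounded above and below by positive constants depending only on $\|F\|_\infty,\|G\|_\infty,T$. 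Hence $u=-2\log w$ is bounded.
\item The Schauder estimate in $C^{\frac{2+\alpha}{2},2+\alpha}_{\mathcal{X}}$ bounds $u$ independently of $\mu$. For this, $F(\cdot,\mu(t))$ must be $\alpha/2$-H\"older in $t$, which follows from $\operatorname{Lip}$ in $d_1$ together with the $C^{1/2}$ bound on $\mu$.
\end{itemize}
Next I solve the linear FPK equation with the bounded drift $D_{\mathcal{X}}u$. Testing against $d_{cc}$-Lipschitz functions and using the SDE representation $dZ=-\sum_k X_ku\,X_k(Z)\,ds+\sqrt2\sum_k X_k(Z)\circ dB^k$ gives $\mathbb{E}[d_{cc}(Z_s,Z_t)]\le C|t-s|^{1/2}$. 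This yields the $C^{1/2}$ bound on $m$ with a constant $L$ independent of $\mu$, so the map $\mu\mapsto m$ sends $\mathcal{C}$ into itself. Continuity of the map follows from stability of the linear problems. Schauder's theorem then gives a solution.

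\textbf{Higher regularity.} To reach $C^{\frac{\alpha}{2},k+\alpha}_{\mathcal{X}}$, I would differentiate the equation for $w$ along products $X_I$. Because the $X_i$ do not commute with $\Delta_{\mathcal{X}}$, the resulting equations contain commutator terms, which involve vector fields of higher layers. These are controlled using the stratified structure: any higher-layer field is a bracket of horizontal ones, and horizontal fields are left-invariant, so $[X_i,\Delta_{\mathcal{X}}]$ is a second-order horizontal operator. The bound is then obtained by induction on $|I|$, using that $F(\cdot,m)\in C^{k-2+\alpha}_{\mathcal{X}}$ and $G(\cdot,m)\in C^{k+\alpha}_{\mathcal{X}}$ uniformly in $m$. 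Alternatively, one can apply interior hypoelliptic Schauder estimates repeatedly with source $F\in C^{k-2+\alpha}_{\mathcal{X}}$ and use that the data are only Hölder in time, which is why only $\alpha/2$ time regularity is claimed. \emph{I expect this step to be the main obstacle}: obtaining the $C^{k+\alpha}_{\mathcal{X}}$ bound uniformly up to $t=T$ with constants independent of $(t_0,m_0)$, given the non-commutativity and the anisotropic H\"older spaces. If $m_0$ has a smooth positive density, the same linear theory applied to the FPK equation, now with smooth coefficients, gives the regularity of $m$, and the strong maximum principle for hypoelliptic operators (Bony) gives $m>0$.

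\textbf{Uniqueness.} I would use the Lasry--Lions argument. Subtract the two systems and test the difference of the HJB equations against $m_1-m_2$. Integrating in time and using \eqref{monotonicity_F}, \eqref{monotonicity_G} and the strict convexity of $H$ gives
\begin{equation*}
\int_{t_0}^{T}\!\!\int_{\mathbb{T}_{\mathbb{G}}}(m_1+m_2)\,|D_{\mathcal{X}}u_1-D_{\mathcal{X}}u_2|^2\,dx\,dt\le 0.
\end{equation*}
Hence the two FPK equations have the same drift, so $m_1=m_2$ by uniqueness of the linear FPK equation, and then $u_1=u_2$. For a general measure $m_0$ the integrations by parts need justification; I would handle this by approximation, or by noting that $m(t)$ has a density for $t>t_0$ by hypoellipticity.

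\textbf{Stability and properties of $U$.} For stability, the uniform bounds give compactness of $(u^i,m^i)$ in $C^{1,2}_{\mathcal{X}}\times C(\mathcal{P})$ (Arzelà--Ascoli, using the $C^{2+\alpha}$ bounds). Every limit point solves \eqref{MFG system} with initial datum $m_0$, so uniqueness gives convergence of the whole sequence. Finally, the bound on $U(t_0,\cdot,m_0)$ is the estimate \eqref{MFG_regularity} at $t=t_0$. Continuity of $U$ and its Lie derivatives in $(t_0,x,m_0)$ follows from the stability result combined with the time regularity in \eqref{MFG_regularity}, which handles variations of $t_0$.
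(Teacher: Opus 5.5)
Your architecture matches the paper's. It uses the Hopf--Cole reduction to the linear equation for $w=e^{-u/2}$, a Schauder fixed point on flows with a uniform $C^{1/2}$ bound in $d_1$, the Lasry--Lions argument for uniqueness, and stability via uniqueness of limits. Two sub-steps are done differently:
\begin{enumerate}
\item \emph{The $\frac12$-H\"older bound on $m$.} The paper stays analytic. It writes $\int\xi\,d(m(t_1)-m(t_2))$ as a time increment of the solution of the dual backward equation with $d_{cc}$-Lipschitz terminal datum $\xi$, and then applies Lemma \ref{Lem_LHJB Lipschitz}. Your SDE route also works, but $D_{\mathcal{X}}u$ is only $C^{k-1+\alpha}_{\mathcal{X}}$. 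You should therefore build the process by Girsanov from horizontal Brownian motion: the drift is horizontal and bounded, and $d_{cc}(Z_s,Z_t)$ then scales like $|t-s|^{1/2}$ by homogeneity. You must also identify its law with the duality solution of Lemma \ref{Lem_general FPK wellposed regularity}.
\item \emph{Stability.} The paper derives a Gronwall-type Cauchy estimate for $u^i-u^j$ in $C^{k+\alpha}_{\mathcal{X}}$. Your compactness-plus-uniqueness argument is softer but sufficient. Your remark that joint continuity of $U$ in $t_0$ also needs the time regularity, together with the flow property of the MFG system, covers a point the paper passes over.
\end{enumerate}

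Two points need fixing.

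First, the commutator claim in your higher-regularity step is false. We have $[X_i,\Delta_{\mathcal{X}}]=\sum_j\left([X_i,X_j]X_j+X_j[X_i,X_j]\right)$ with $[X_i,X_j]$ in the second layer. So it is a third-order horizontal operator, of the same homogeneous degree as $X_i\Delta_{\mathcal{X}}$; on the Heisenberg group, for instance, $[X,\Delta_{\mathcal{X}}]=2[X,Y]Y$. Differentiating the $w$-equation along $X_I$ therefore produces a source containing exactly the derivatives you are trying to bound, and the induction on $|I|$ does not close. Left-invariance does not help, since $\Delta_{\mathcal{X}}$ is itself left-invariant and the horizontal fields are not central. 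What commutes with it is the family of right-invariant fields, and these do not descend to $\mathbb{T}_{\mathbb{G}}=\mathbb{G}/\mathbb{Z}^n$. Your second option is the right one, and it is exactly what the paper does. It applies the higher-order non-isotropic Schauder estimate \cite[Proposition 4.1]{JWY} to the linear equation for $w$, then transfers it to $u=-2\log w$ using the positive lower bound on $w$.

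Second, the self-map property of your fixed-point set must not be circular. The $C^{\frac{2+\alpha}{2},2+\alpha}_{\mathcal{X}}$ bound on $u$ depends on $L$ through the time modulus of $F(\cdot,\mu(t))$. So $L$ has to be fixed from a bound on $\|D_{\mathcal{X}}u\|_{L^\infty}$ that is independent of $L$. Lemma \ref{Lem_LHJB Lipschitz}, applied to the $w$-equation with source $-\frac12F(\cdot,\mu(t))w$, gives such a bound. It involves only $\|F\|_{L^\infty}$, $\|w\|_{L^\infty}$ and the Lipschitz norm of $e^{-G(\cdot,\mu(T))/2}$. The paper leaves this point implicit too.
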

\begin{Prop}[Lipschitz continuity of $U$ in $m_0$]\label{Prop_Lip. ctn. of U}
Let assumptions \ref{assum1}-\ref{assum3} and the conclusions of Proposition \ref{Prop_MFG system wellposed regularity} hold, $t_0 \in[0, T]$, $m_0^1$, $m_0^2 \in \mathcal{P}\left(\mathbb{T}_{\mathbb{G}}\right)$, and $\left(u_1, m_1\right)$, $\left(u_2, m_2\right)$ be the solutions to the MFG system \eqref{MFG system} with initial conditions $\left(t_0, m_0^1\right)$ and $\left(t_0, m_0^2\right)$ respectively. Then
$$
\sup _{t \in[t_0, T]}\left\{d_1\left(m_1(t), m_2(t)\right)+\left\|u_1(t, \cdot)-u_2(t, \cdot)\right\|_{C_{\mathcal{X}}^{k+\alpha}(\mathbb{T}_{\mathbb{G}})}\right\} \leq C d_1\left(m_0^1, m_0^2\right),
$$
where the constant $C>0$ does not depend on $t_0$, $m_0^1$ and $m_0^2$.

Finally, let $U$ be the function defined in \eqref{U_Def}, then
\begin{equation}\label{U_Lip. in m}
\left\|U\left(t_0, \cdot, m_0^1\right)-U\left(t_0, \cdot, m_0^2\right)\right\|_{{C_{\mathcal{X}}^{k+\alpha}(\mathbb{T}_{\mathbb{G}})}} \leq C d_1\left(m_0^1, m_0^2\right),
\end{equation}
where the constant $C>0$ depends on $\mathbb{G}$, $\alpha$, $k$, $T$, $c_{F}$, $c_{G}$ and $H$ only.
\end{Prop}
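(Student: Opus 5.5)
We follow the classical argument of \cite[Proposition 3.1.1]{19CDLL}, adapted to the non-isotropic setting. Set $u:=u_1-u_2$ and $m:=m_1-m_2$. The first step is the Lasry--Lions duality computation. Using $H(x,p)=\frac12|p|^2$ (or, more generally, assumption \ref{assum3}), we compute $\frac{d}{dt}\int_{\mathbb{T}_{\mathbb{G}}} u(t)\,dm(t)$. For this we integrate by parts with respect to the vector fields $X_i$. This is legitimate on $\mathbb{T}_{\mathbb{G}}$: the Jacobian generators of a Carnot group are divergence-free with respect to Lebesgue measure, so $X_i^*=-X_i$. Integrating in time from $t_0$ to $T$ gives
\begin{align*}
&\int_{t_0}^T\int_{\mathbb{T}_{\mathbb{G}}}\Bigl(m_1 \bigl(H(x,D_{\mathcal X}u_2)-H(x,D_{\mathcal X}u_1)-D_pH(x,D_{\mathcal X}u_1)\cdot D_{\mathcal X}(u_2-u_1)\bigr)\\
&\qquad+m_2(\cdots)\Bigr)\,dx\,dt
+\int_{t_0}^T\!\!\int (F(x,m_1)-F(x,m_2))\,d(m_1-m_2)\,dt\\
&\qquad+\int (G(x,m_1(T))-G(x,m_2(T)))\,d(m_1-m_2)(T)
=-\int u(t_0)\,d(m_0^1-m_0^2).
\end{align*}
By the monotonicity conditions \eqref{monotonicity_F}--\eqref{monotonicity_G} and the convexity in \ref{assum3}, we obtain
\[
\int_{t_0}^T\!\!\int (m_1+m_2)|D_{\mathcal X}u|^2 \le C\|u(t_0)\|_{C^{0+1}_{\mathcal X}}\,d_1(m_0^1,m_0^2).
\]
Here we use that $d_1$ is dual to $d_{cc}$-Lipschitz functions and that a horizontal gradient bound controls the $d_{cc}$-Lipschitz constant.

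The second step estimates $m$. We test the difference of the FPK equations against the solution $\varphi$ of the backward linear degenerate equation
\[
-\partial_t\varphi-\Delta_{\mathcal X}\varphi+D_pH(x,D_{\mathcal X}u_1)\cdot D_{\mathcal X}\varphi=0,\qquad \varphi(t)=\xi,
\]
with $\xi$ $d_{cc}$-Lipschitz of constant at most $1$. The drift $D_{\mathcal X}u_1$ is bounded in $C^{\alpha/2,k-1+\alpha}_{\mathcal X}$ by Proposition \ref{Prop_MFG system wellposed regularity}, so the linear hypoelliptic theory (Lipschitz estimates along $\mathcal X$) gives $\|D_{\mathcal X}\varphi\|_\infty\le C$. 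This yields
\[
\int \xi\, d(m_1-m_2)(t)\le C d_1(m_0^1,m_0^2)+C\int_{t_0}^t\!\!\int m_2|D_{\mathcal X}u|\,dx\,ds .
\]
By Cauchy--Schwarz and the first step,
\[
\sup_t d_1(m_1(t),m_2(t))\le C d_1(m_0^1,m_0^2)+C\|u\|^{1/2}_{C^{0,1}}\,d_1(m_0^1,m_0^2)^{1/2}.
\]

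The third step estimates $u$. The function $u$ solves the linear equation
\[
-\partial_t u-\Delta_{\mathcal X}u+V\cdot D_{\mathcal X}u=F(x,m_1)-F(x,m_2),\qquad u(T)=G(\cdot,m_1(T))-G(\cdot,m_2(T)),
\]
where $V=\int_0^1 D_pH(x,sD_{\mathcal X}u_1+(1-s)D_{\mathcal X}u_2)\,ds$ is bounded in $C^{\alpha/2,k-1+\alpha}_{\mathcal X}$. Assumptions \ref{assum1}--\ref{assum2}, namely the bounds on $\delta F/\delta m$ and $\delta G/\delta m$, give
\[
\|F(\cdot,m_1(t))-F(\cdot,m_2(t))\|_{C^{k-1+\alpha}_{\mathcal X}}\le C d_1(m_1(t),m_2(t)),
\]
and the same for $G$ in $C^{k+\alpha}_{\mathcal X}$. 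The Schauder estimates for linear degenerate parabolic equations in non-isotropic H\"older spaces, which the paper develops and which we assume, then give
\[
\sup_t\|u(t)\|_{C^{k+\alpha}_{\mathcal X}}\le C\sup_t d_1(m_1(t),m_2(t)).
\]

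Combining the three steps, set $A=\sup_t d_1(m_1,m_2)$ and $\delta=d_1(m_0^1,m_0^2)$. Then $A\le C\delta+C A^{1/2}\delta^{1/2}$, so by Young's inequality $A\le C\delta$, and hence $\|u\|\le C\delta$. The bound \eqref{U_Lip. in m} follows by taking $t=t_0$ and using \eqref{U_Def}.

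The main obstacle is the second step. Measures need not have densities, so we first work with smooth positive $m_0^i$ using Proposition \ref{Prop_MFG system wellposed regularity}, and conclude by density and the stability statement. The real difficulty is the Lipschitz bound on $\varphi$ along horizontal directions, uniformly in the data, which requires the degenerate parabolic Schauder theory on $\mathbb{T}_{\mathbb{G}}$ with $\xi$ only $C^{0+1}_{\mathcal X}$.
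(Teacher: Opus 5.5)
Your proposal is correct and follows essentially the same route as the paper. The Lasry--Lions monotonicity identity bounds $\int_{t_0}^T\!\int |D_{\mathcal X}(u_1-u_2)|^2\,dm_2\,dt$ by $\|D_{\mathcal X}(u_1-u_2)(t_0)\|_{L^\infty}\,d_1(m_0^1,m_0^2)$. Duality with the backward linear equation and Lemma \ref{Lem_LHJB Lipschitz} then control $\sup_t d_1(m_1(t),m_2(t))$, and Lemma \ref{Lem_LHJB wellposed regularity} controls $u_1-u_2$ in $C^{k+\alpha}_{\mathcal X}$. The loop is closed by absorption: you absorb in $\sup_t d_1$, while the paper absorbs in $\|D_{\mathcal X}(u_1-u_2)(t_0)\|_{L^\infty}$ via an $\epsilon$-Cauchy inequality, which is equivalent.

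The only slip is a sign: the right-hand side of your duality identity should be $+\int u(t_0)\,d(m_0^1-m_0^2)$. This is harmless, since you only use its absolute value.
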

To prove Proposition \ref{Prop_MFG system wellposed regularity}, we need Lemmas \ref{Lem_LHJB well-posedness}-\ref{Lem_general FPK wellposed regularity} below and then use the Schauder fixed point theorem. To prove Proposition \ref{Prop_Lip. ctn. of U}, we need Lemmas \ref{Lem_LHJB well-posedness}-\ref{Lem_LHJB Lipschitz} below, which are essential to get the estimates of $\left\|u_1(t, \cdot)-u_2(t, \cdot)\right\|_{2+\alpha}$ and $d_1\left(m_1(t), m_2(t)\right)$. Specifically, Lemmas \ref{Lem_LHJB wellposed regularity}-\ref{Lem_general FPK wellposed regularity} deal with two classes of linear parabolic equations on Carnot tori: the linearized HJB equation
\begin{equation}\label{LHJB}
\begin{cases}
-\partial_t z-\Delta_{\mathcal{X}} z+b(t,x) \cdot D_{\mathcal{X}} z=f(t, x),& \text {in }[0, T) \times \mathbb{T}_{\mathbb{G}}, \\
z(T, x)=z_T(x),& \text {in }\mathbb{T}_{\mathbb{G}}
\end{cases}
\end{equation}
and the FPK equation with the following general form:
\begin{equation}\label{general FPK}
\begin{cases}
\partial_t\rho-\Delta_\mathcal{X}\rho-\operatorname{div}_{\mathcal{X}}(\rho b)=\upsilon, & \text{ in } [0,T] \times \mathbb{T}_{\mathbb{G}}, \\
\rho(0)=\rho_{0}, & \text{ in } \mathbb{T}_{\mathbb{G}}.
\end{cases}
\end{equation}
Now we give the detailed statements of these lemmas as follows:
\begin{Lem}[see {\cite[Corollary 1.1]{25JWY} and \cite[Proposition 4.2]{JWY}}]\label{Lem_LHJB well-posedness}
Let $\alpha\in(0,1)$. For any $i,j\in\{1,2,\ldots,n_1\}$, assume $a_{i,j}(t,x)$, $b_i(t,x)$, $c(t,x)$, $f(t,x)$ are continuous on $[0,T]\times\mathbb{T}_{\mathbb{G}}$, satisfying $a_{i,j}\in C_{{\mathcal{X}}}^{\frac{\alpha}{2},\alpha} \left([0,T]\times\mathbb{T}_{\mathbb{G}}\right)$, $b_i$, $c$, $f\in B\left([0,T];C_{\mathcal{X}}^{\alpha}\left(\mathbb{T}_{\mathbb{G}}\right)\right)$, and $g(x)\in C_{\mathcal{X}}^{2+\alpha}\left(\mathbb{T}_{\mathbb{G}}\right)$. Then there exists a unique solution $z(t,x)\in C_{{\mathcal{X}}}^{1,2} \left([0,T]\times\mathbb{T}_{\mathbb{G}}\right)$ to the equation
\begin{equation*}\label{LHJB 2}
\begin{cases}
\mathbf{H} z(t,x)=f(t,x),& \text {in }[0, T] \times \mathbb{T}_{\mathbb{G}}, \\
z(0,x)=g(x),& \text {in }\mathbb{T}_{\mathbb{G}},
\end{cases}
\end{equation*}
where the operator
\begin{equation*}
\mathbf{H}:=\partial_t-\sum_{i,j=1}^{n_1}a_{i,j}(t,x)X_i X_j-\sum_{i=1}^{n_1}b_i(t,x)X_i-c(t,x).
\end{equation*}
Moreover, $z$ has the form
\begin{equation*}
z(t,x)=\int_{\mathbb{T}_{\mathbb{G}}}\sum_{k\in\mathbb{Z}^{n}}\Gamma(t,k\circ x;0,y)g(y)d y + \int_{0}^{t}\int_{\mathbb{T}_{\mathbb{G}}}\sum_{k\in\mathbb{Z}^{n}}\Gamma(t,k\circ x;s,y)f(s,y)d y d s
\end{equation*}
for any $(t,x)\in[0,T]\times\mathbb{T}_{\mathbb{G}}$. Here, $\Gamma:(\mathbb{R}\times\mathbb{G})\times(\mathbb{R}\times\mathbb{G})\to\mathbb{R}$ is the fundamental solution for $\mathbf{H}$ on $\mathbb{R}\times\mathbb{G}$ and $\Gamma\geq 0$.
\end{Lem}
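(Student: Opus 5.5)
The plan is to lift the problem to the whole group $\mathbb{G}$ and then return to the torus by periodization. First, extend the coefficients $a_{i,j},b_i,c,f$ and the datum $g$ to $[0,T]\times\mathbb{G}$ as $\mathbb{Z}^n$-periodic functions, meaning invariant under left translation $x\mapsto k\circ x$, $k\in\mathbb{Z}^n$. Because $\mathbb{T}_{\mathbb{G}}=\mathbb{G}/\mathbb{Z}^n$ and the vector fields $X_i$ are left-invariant, these extensions keep the non-isotropic H\"older regularity, with norms uniform on $\mathbb{G}$.

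We then use the existence theory for the fundamental solution $\Gamma$ of $\mathbf{H}$ on $\mathbb{R}\times\mathbb{G}$ with H\"older coefficients. This is the Levi parametrix method in the Bonfiglioli--Lanconelli--Uguzzoni framework for H\"ormander-type operators on Carnot groups. It gives
\begin{itemize}
\item $\Gamma\geq 0$;
\item Gaussian upper bounds $\Gamma(t,x;s,y)\leq C(t-s)^{-Q/2}\exp\left(-c\,d_{cc}(x,y)^2/(t-s)\right)$, where $Q$ is the homogeneous dimension;
\item analogous bounds for $X_I\Gamma$ with $|I|\leq 2$ and for $\partial_t\Gamma$, carrying the extra factors $(t-s)^{-|I|/2}$.
\end{itemize}
The Cauchy problem on $\mathbb{G}$ with bounded data is then solved by the formula $z=\int\Gamma g+\iint\Gamma f$.

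Second, we periodize. Fix a fundamental domain of $\mathbb{T}_{\mathbb{G}}$ and split $\int_{\mathbb{G}}$ into a sum over $k\circ$(fundamental domain). We then check that the series $\sum_{k}\Gamma(t,k\circ x;s,y)$ converges absolutely and locally uniformly for $t>s$. This follows from the Gaussian bound together with polynomial volume growth, $|B_R|\sim R^Q$, since $d_{cc}(k\circ x,y)\to\infty$ as $|k|\to\infty$. Term-by-term differentiation via the derivative bounds shows that $z$ lies in $C^{1,2}_{\mathcal{X}}$ and solves the equation. Periodicity of the coefficients gives uniqueness of $\Gamma$ under translation, namely $\Gamma(t,k\circ x;s,k\circ y)=\Gamma(t,x;s,y)$, and hence $z$ descends to $\mathbb{T}_{\mathbb{G}}$. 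The singular term $k=0$ is handled exactly as on $\mathbb{G}$: $\int\Gamma f$ is $C^{1,2}$ because $f$ is H\"older in $x$ uniformly in $t$ (the $B([0,T];C^\alpha_{\mathcal{X}})$ assumption), via the standard cancellation $\int X_iX_j\Gamma(t,x;s,y)\,dy\approx 0$ for the frozen-coefficient kernel.

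Third, uniqueness. If $z$ solves the problem with $f=0$ and $g=0$, apply the maximum principle on the compact $\mathbb{T}_{\mathbb{G}}$ to $w=e^{-\lambda t}z$ with $\lambda>\sup|c|$. At an interior positive maximum we have $D_{\mathcal{X}}w=0$ and $\sum a_{ij}X_iX_jw\leq 0$, the latter because $(a_{ij})$ is positive definite and the horizontal Hessian at a maximum is nonpositive. Together with $\partial_t w\geq 0$ this contradicts $\mathbf{H}z=0$. The same argument applied to $-z$ gives $z\equiv0$.

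The main obstacle is the construction of $\Gamma$ with the required derivative bounds when the coefficients are merely H\"older in $(t,x)$ for $a_{ij}$, and only bounded-measurable in $t$ for $b,c,f$. The parametrix iteration has to be run using the non-isotropic Gaussian estimates of the frozen operators $\partial_t-\sum a_{ij}(s,y)X_iX_j$, whose kernels are explicit by homogeneity and left-invariance. I would take this step from the cited works \cite{25JWY,JWY} rather than redo it.
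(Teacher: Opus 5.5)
Your outline is sound and matches what the paper relies on: the paper gives no argument of its own beyond citing \cite[Corollary 1.1]{25JWY} and \cite[Proposition 4.2]{JWY}, and your scheme unpacks that citation naturally. It consists of the fundamental solution of $\mathbf{H}$ on $\mathbb{R}\times\mathbb{G}$ with the parametrix step taken from those same sources, periodization via left-invariance of the $X_i$ and the invariance $\Gamma(t,k\circ x;s,k\circ y)=\Gamma(t,x;s,y)$, and uniqueness by the maximum principle on the compact torus. The one step to add is the behaviour at $t=0$: derivative bounds on $\Gamma$ give $z\in C^{1,2}_{\mathcal{X}}$ only on $(0,T]\times\mathbb{T}_{\mathbb{G}}$, and reaching the closed interval requires $g\in C^{2+\alpha}_{\mathcal{X}}(\mathbb{T}_{\mathbb{G}})$, which your sketch never uses (e.g.\ write $z=g+\tilde z$ with $\mathbf{H}\tilde z=f-\mathbf{H}g\in B([0,T];C^{\alpha}_{\mathcal{X}}(\mathbb{T}_{\mathbb{G}}))$ and $\tilde z(0)=0$, then identify it with the $\Gamma$-formula by your uniqueness argument); note also that your maximum-principle step, like the existence of $\Gamma$, uses symmetry and uniform positive definiteness of $(a_{i,j})$, which the statement leaves implicit.
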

\begin{Lem}[see {\cite[Theorem 1.1]{JWY}}]\label{Lem_LHJB wellposed regularity}
Let $k\in\mathbb{Z}_+$ and $\alpha\in(0,1)$. Assume $b(t,x)$ and $f(t,x)$ are continuous on $[0,T]\times\mathbb{T}_{\mathbb{G}}$, satisfying $b\in B\left([0,T];C_{\mathcal{X}}^{k-1+\alpha}\left(\mathbb{T}_{\mathbb{G}};\mathbb{R}^{n_1}\right)\right)$, $f\in B\left([0,T];C_{\mathcal{X}}^{k-1+\alpha}\left(\mathbb{T}_{\mathbb{G}}\right)\right)$ and $z_T(x)\in C_{\mathcal{X}}^{k+\alpha}\left(\mathbb{T}_{\mathbb{G}}\right)$. Then equation \eqref{LHJB} has a unique solution $z$ which belongs to $C_{\mathcal{X}}^{1,2} \left([0,T) \times \mathbb{T}_{\mathbb{G}} \right) \cap C([0, T] \times \mathbb{T}_{\mathbb{G}})$ and satisfies
\begin{equation}\label{LHJB_USE}
\sup_{t \in[0, T]}\|z(t, \cdot)\|_{C_{\mathcal{X}}^{k+\alpha}\left(\mathbb{T}_{\mathbb{G}}\right)} \leq C\left(\left\|z_T\right\|_{C_{\mathcal{X}}^{k+\alpha}\left(\mathbb{T}_{\mathbb{G}}\right)}+\sup_{t\in(0,T)}\|f(t, \cdot)\|_{C_{\mathcal{X}}^{k-1+\alpha}\left(\mathbb{T}_{\mathbb{G}}\right)}\right).
\end{equation}
In addition, for any constant $\epsilon \in (0,T)$, $z$ satisfies
\begin{equation}\label{LHJB_HSE 1}
\sup_{\substack{t \neq t'\\t,t' \in [0,T-\epsilon]}} \frac{\left\|z\left(t^{\prime}, \cdot\right)-z(t, \cdot)\right\|_{C_{\mathcal{X}}^{k+\alpha}\left(\mathbb{T}_{\mathbb{G}}\right)}}{\left|t^{\prime}-t\right|^{\frac{1}{2}}} \leq C\left(\epsilon^{-\frac{1}{2}}\left\|z_T\right\|_{C_{\mathcal{X}}^{k+\alpha}\left(\mathbb{T}_{\mathbb{G}}\right)}+\sup_{t \in(0, T)}\|f(t, \cdot)\|_{C_{\mathcal{X}}^{k-1+\alpha}\left(\mathbb{T}_{\mathbb{G}}\right)}\right).
\end{equation}
Moreover, if $z_T(x)\in C_{\mathcal{X}}^{k+1+\alpha}\left(\mathbb{T}_{\mathbb{G}}\right)$, then $z$ satisfies
\begin{equation}\label{LHJB_HSE 2}
\sup _{\substack{t \neq t'\\t,t' \in [0,T]}} \frac{\left\|z\left(t^{\prime}, \cdot\right)-z(t, \cdot)\right\|_{C_{\mathcal{X}}^{k+\alpha}\left(\mathbb{T}_{\mathbb{G}}\right)}}{\left|t^{\prime}-t\right|^{\frac{1}{2}}} \leq C\left(\left\|z_T\right\|_{C_{\mathcal{X}}^{k+1+\alpha}\left(\mathbb{T}_{\mathbb{G}}\right)}+\sup_{t \in(0, T)}\|f(t, \cdot)\|_{C_{\mathcal{X}}^{k-1+\alpha}\left(\mathbb{T}_{\mathbb{G}}\right)}\right).
\end{equation}
Here, the constants $C>0$ depend on $\mathbb{G}$, $\alpha$, $k$, $T$ and $\sup_{t\in(0,T)}\|b(t,\cdot)\|_{C_{\mathcal{X}}^{k-1+\alpha}\left(\mathbb{T}_{\mathbb{G}};\mathbb{R}^{n_1}\right)}$ only.
\end{Lem}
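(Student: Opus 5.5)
The plan is to work with the Duhamel representation of Lemma \ref{Lem_LHJB well-posedness} and then bootstrap the horizontal derivatives one order at a time.

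\textbf{Existence and uniqueness.} Reverse time by setting $w(t,x)=z(T-t,x)$. Then \eqref{LHJB} becomes the forward problem $\partial_t w-\Delta_{\mathcal{X}}w+\tilde b\cdot D_{\mathcal{X}}w=\tilde f$ with $w(0)=z_T$, where $\tilde b(t,x)=b(T-t,x)$ and $\tilde f(t,x)=f(T-t,x)$. This is $\mathbf{H}w=\tilde f$ with
\[
a_{i,j}=\delta_{ij},\qquad b_i=-\tilde b_i,\qquad c=0 .
\]
Since $k\geq 1$, these coefficients lie in $B([0,T];C^{\alpha}_{\mathcal{X}})$. For $k\geq2$, Lemma \ref{Lem_LHJB well-posedness} applies directly and gives existence, uniqueness, $C^{1,2}_{\mathcal{X}}$ regularity, and the representation formula through the periodized fundamental solution $\Gamma_{per}=\sum_{k}\Gamma(t,k\circ x;s,y)$.

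For $k=1$ the datum $z_T$ is only $C^{1+\alpha}_{\mathcal{X}}$. I would approximate $z_T$ by $C^{2+\alpha}_{\mathcal{X}}$ data, for instance by a convolution mollification on the group, solve each approximate problem, and pass to the limit using the uniform estimate proved below. This yields a solution in $C^{1,2}_{\mathcal{X}}([0,T)\times\mathbb{T}_{\mathbb{G}})\cap C([0,T]\times\mathbb{T}_{\mathbb{G}})$. Uniqueness in that class follows from the weak maximum principle for $\partial_t-\Delta_{\mathcal{X}}+b\cdot D_{\mathcal{X}}$ on the compact manifold $\mathbb{T}_{\mathbb{G}}$.

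\textbf{Estimate \eqref{LHJB_USE}.} I would use the Gaussian-type bounds for $\Gamma$ and its horizontal derivatives available on Carnot groups (Bonfiglioli–Lanconelli–Uguzzoni type):
\[
|X_I\Gamma(t,x;s,y)|\leq C(t-s)^{-(Q+|I|)/2}e^{-c\,d_{cc}^2(x,y)/(t-s)} ,
\]
where $Q$ is the homogeneous dimension, together with the corresponding Hölder continuity of $X_I\Gamma$. From these, the standard singular-integral argument for the volume potential gives a gain of one derivative:
\[
\sup_t\Big\|\int_0^t\!\!\int\Gamma_{per}f\Big\|_{C^{k+\alpha}_{\mathcal{X}}}\leq C\sup_t\|f\|_{C^{k-1+\alpha}_{\mathcal{X}}} .
\]
For the initial-datum term, it is convenient to first treat the constant-coefficient heat semigroup $e^{t\Delta_{\mathcal{X}}}$. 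This semigroup commutes with the left-invariant fields, so its $C^{k+\alpha}_{\mathcal{X}}$ norm is controlled by $\|z_T\|_{C^{k+\alpha}_{\mathcal{X}}}$. The drift term $b\cdot D_{\mathcal{X}}z$ is then moved to the right-hand side, which gives the fixed-point identity
\[
z=e^{(T-t)\Delta_{\mathcal{X}}}z_T+\int_t^T e^{(s-t)\Delta_{\mathcal{X}}}\bigl(f-b\cdot D_{\mathcal{X}}z\bigr)\,ds .
\]
The $C^{k+\alpha}$ control is obtained inductively in $k$. At $k=1$, one needs $\|b\cdot D_{\mathcal{X}}z\|_{C^{\alpha}}\leq C\|z\|_{C^{1+\alpha}}$, which is absorbed either by working on a short time interval or by a Gronwall argument using the integrable singularity $(s-t)^{-1/2}$, with a weighted-in-time sup norm. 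For the step from $k-1$ to $k$, the inductive hypothesis already controls $\|b\cdot D_{\mathcal{X}}z\|_{C^{k-1+\alpha}}$ up to a lower-order remainder, and the same smoothing argument closes the estimate.

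\textbf{Time Hölder bounds \eqref{LHJB_HSE 1} and \eqref{LHJB_HSE 2}.} I would split
\[
z(t')-z(t)=\bigl(e^{(T-t')\Delta}-e^{(T-t)\Delta}\bigr)z_T+\text{Duhamel differences}.
\]
The semigroup difference is bounded in $C^{k+\alpha}$ by $|t-t'|^{1/2}$ times either $\|z_T\|_{C^{k+1+\alpha}}$, using $\|\partial_s e^{s\Delta}g\|_{C^{k+\alpha}}\lesssim s^{-1/2}\|g\|_{C^{k+1+\alpha}}$ integrated in $s$, or $\epsilon^{-1/2}\|z_T\|_{C^{k+\alpha}}$ when $T-t\geq\epsilon$. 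The Duhamel pieces give $|t-t'|^{1/2}$ with the $f$ and $b$ norms, via the same kernel bounds.

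\textbf{Main obstacle.} The delicate step is the kernel estimate for the gain of one derivative in the non-isotropic Hölder scale. It requires controlling $X_I$ derivatives of order $k$ of the Duhamel integral when $f$ has only $k-1$ derivatives. The approach is to move one derivative onto the kernel and use the cancellation property $\int X_j\Gamma\,dy\approx0$, which holds in the left-invariant constant-coefficient case. This is also why freezing the drift and using the $e^{t\Delta_{\mathcal{X}}}$ semigroup is preferable to working directly with $\Gamma$ for the variable-coefficient operator.
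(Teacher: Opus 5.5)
The paper does not prove this lemma; it imports it from \cite[Theorem 1.1]{JWY}, so your argument has to stand on its own. The overall skeleton is reasonable: a mild formulation with the drift moved to the right-hand side, a singular Gronwall argument with kernel $(s-t)^{-1/2}$, time increments via $\partial_\sigma e^{\sigma\Delta_{\mathcal{X}}}=\Delta_{\mathcal{X}}e^{\sigma\Delta_{\mathcal{X}}}$, and uniqueness by the maximum principle.

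\textbf{The gap.} The step on which every $C^{k+\alpha}_{\mathcal{X}}$ bound rests is false: $e^{t\Delta_{\mathcal{X}}}$ does \emph{not} commute with the left-invariant fields $X_i$ when $\mathbb{G}$ is non-abelian. Since $\Delta_{\mathcal{X}}$ is left-invariant, $e^{t\Delta_{\mathcal{X}}}g=g*p_t$, where $(g*p)(x)=\int g(y)p(y^{-1}\circ x)\,dy$ and $p_t$ is the heat kernel of $\Delta_{\mathcal{X}}$ on $\mathbb{G}$.
\begin{itemize}
\item This semigroup commutes with left translations, i.e.\ with the right-invariant fields $\tilde X_j$ that generate them.
\item By contrast, $X_I(g*p_t)=g*X_Ip_t$, while integration by parts gives $(X_jg)*p_t=g*\tilde X_jp_t$.
\item Already in the Heisenberg group, $[X_1,X_2]$ is a nonzero central field and $[X_1,\Delta_{\mathcal{X}}]=2[X_1,X_2]X_2\neq 0$. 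Differentiating at $t=0$ shows $X_1e^{t\Delta_{\mathcal{X}}}\neq e^{t\Delta_{\mathcal{X}}}X_1$.
\item You cannot switch to the $\tilde X_j$ either: in general they do not descend to $\mathbb{T}_{\mathbb{G}}$, which is the quotient by the \emph{left} action of $\mathbb{Z}^n$.
\end{itemize}

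\textbf{Consequences.} Neither $\|e^{(T-t)\Delta_{\mathcal{X}}}z_T\|_{C^{k+\alpha}_{\mathcal{X}}}\leq C\|z_T\|_{C^{k+\alpha}_{\mathcal{X}}}$ nor the one-derivative gain for the Duhamel term is justified. The latter requires moving $k-1$ horizontal derivatives onto $f-b\cdot D_{\mathcal{X}}z$; the cancellation $\int X_j\Gamma\,dy=0$ only handles the single derivative left on the kernel. The same obstruction rules out differentiating the equation, since $[X_i,\Delta_{\mathcal{X}}]$ has the same homogeneous order as $X_i\Delta_{\mathcal{X}}$. It also affects your mollification for $k=1$: right convolution on $\mathbb{T}_{\mathbb{G}}$ does not commute with $X_i$, and left convolution destroys periodicity. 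This non-commutativity is exactly the difficulty the paper says it delegates to \cite{JWY}.

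\textbf{What is missing.} You need two semigroup estimates in the left-invariant H\"older scale on $\mathbb{T}_{\mathbb{G}}$, for $0<\tau\leq T$:
\[
\|e^{\tau\Delta_{\mathcal{X}}}h\|_{C^{k+\alpha}_{\mathcal{X}}(\mathbb{T}_{\mathbb{G}})}\leq C\|h\|_{C^{k+\alpha}_{\mathcal{X}}(\mathbb{T}_{\mathbb{G}})},\qquad
\|e^{\tau\Delta_{\mathcal{X}}}h\|_{C^{k+\alpha}_{\mathcal{X}}(\mathbb{T}_{\mathbb{G}})}\leq C\tau^{-\frac{1}{2}}\|h\|_{C^{k-1+\alpha}_{\mathcal{X}}(\mathbb{T}_{\mathbb{G}})}.
\]
These should hold, but they need a real argument. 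Two possible routes:
\begin{itemize}
\item Write the discrepancy between $X_Ip_\tau$ and the right-invariant derivatives of $p_\tau$ produced by integration by parts as polynomially weighted derivatives of $p_\tau$ along the higher layers. Homogeneity then gives the correct power of $\tau$, and you combine this with the stratified Taylor formula for $h$ and the vanishing moments of these kernels.
\item Use Folland's characterization of the Lipschitz classes $\Gamma_\beta$ on stratified groups through the heat or Poisson semigroup, which makes both bounds essentially immediate. Combine it with the identification of $\Gamma_{k+\alpha}$ with $C^{k+\alpha}_{\mathcal{X}}$ for $\alpha\in(0,1)$, viewing periodic functions as bounded functions on $\mathbb{G}$.
\end{itemize}

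With these estimates your plan closes, and the induction on $k$ becomes superfluous. The top-order term $b\cdot D_{\mathcal{X}}X_Iz$ is never ``lower order''; at every level it is handled only by the Gronwall absorption. That argument should be run in $B([0,T];C^{k+\alpha}_{\mathcal{X}})$, for instance by contraction on short time intervals, so that the quantity being estimated is known a priori to be finite.
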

\begin{Lem}[see {\cite[Theorem 1.2]{JWY}}]\label{Lem_LHJB Lipschitz}
Assume $b(t,x)$ and $f(t,x)$ are continuous on $[0,T]\times\mathbb{T}_{\mathbb{G}}$, satisfying $b\in B\left([0,T];C_{\mathcal{X}}^{\alpha}\left(\mathbb{T}_{\mathbb{G}};\mathbb{R}^{n_1}\right)\right)$, $f\in B\left([0,T];C_{\mathcal{X}}^{\alpha}\left(\mathbb{T}_{\mathbb{G}}\right)\right)$ and $z_T(x)\in C_{\mathcal{X}}^{0+1}\left(\mathbb{T}_{\mathbb{G}}\right)$. Then there exists a unique solution $z\in C_{\mathcal{X}}^{1,2} \left([0, T) \times \mathbb{T}_{\mathbb{G}} \right) \cap C([0, T] \times \mathbb{T}_{\mathbb{G}})$ to the equation \eqref{LHJB}, satisfying
\begin{align}\label{LHJB_Holder&Lipschitz}
& \sup_{\substack{t \neq t'\\t,t' \in [0,T]}} \frac{\left\|z\left(t^{\prime}, \cdot\right)-z(t, \cdot)\right\|_{L^{\infty}\left(\mathbb{T}_{\mathbb{G}}\right)}}{\left|t^{\prime}-t\right|^{\frac{1}{2}}} +\sup_{t \in [0,T]}\left[z(t,\cdot)\right]_{C_{\mathcal{X}}^{0+1}\left(\mathbb{T}_{\mathbb{G}}\right)}\notag \\
\leq & C\left(\left\|z_T\right\|_{C_{\mathcal{X}}^{0+1}\left(\mathbb{T}_{\mathbb{G}}\right)} +\|f\|_{L^{\infty}\left((0,T)\times\mathbb{T}_{\mathbb{G}}\right)}\right),
\end{align}
where the constant $C>0$ depends on $\mathbb{G}$, $T$ and $\|b\|_{L^{\infty}\left((0,T)\times\mathbb{T}_{\mathbb{G}}\right)}$ only.
\end{Lem}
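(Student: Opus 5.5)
The plan is to work with the Duhamel representation relative to the hypoelliptic heat semigroup of $\Delta_{\mathcal{X}}$ on $\mathbb{T}_{\mathbb{G}}$, and to move the drift into the source. Reversing time ($\tau=T-t$), \eqref{LHJB} becomes a forward equation. Let $p_\tau(x,y)=\sum_{k\in\mathbb{Z}^n}\Gamma_0(\tau,k\circ x;0,y)$ be the periodized heat kernel of $\partial_\tau-\Delta_{\mathcal{X}}$, and $P_\tau$ the corresponding semigroup. Then I expect the identity
\begin{equation*}
z(t)=P_{T-t}z_T+\int_t^T P_{s-t}\bigl(f(s)-b(s)\cdot D_{\mathcal{X}}z(s)\bigr)\,ds .
\end{equation*}

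\emph{Existence and uniqueness.} Existence of a classical solution is not immediate because $z_T$ is only Lipschitz. I would mollify $z_T$ (on the torus, using group convolution) to get $z_T^\varepsilon\in C^{2+\alpha}_{\mathcal{X}}$ with $[z_T^\varepsilon]_{C^{0+1}_{\mathcal{X}}}\le[z_T]_{C^{0+1}_{\mathcal{X}}}$ and $z_T^\varepsilon\to z_T$ uniformly. Lemma \ref{Lem_LHJB well-posedness} (with $a_{ij}=\delta_{ij}$, $c=0$, after time reversal) then gives classical solutions $z^\varepsilon$. The maximum principle gives $\|z^\varepsilon-z^{\varepsilon'}\|_\infty\le\|z_T^\varepsilon-z_T^{\varepsilon'}\|_\infty$. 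Interior Schauder estimates on $[0,T-\delta]$ (Lemma \ref{Lem_LHJB wellposed regularity} applied from time $T-\delta$, or the representation formula) give $C^{1,2}_{\mathcal{X}}$ compactness away from $T$. So $z^\varepsilon\to z$, a solution in $C^{1,2}_{\mathcal{X}}([0,T)\times\mathbb{T}_{\mathbb{G}})\cap C([0,T]\times\mathbb{T}_{\mathbb{G}})$. Uniqueness follows from the weak maximum principle on the compact torus. It therefore suffices to prove \eqref{LHJB_Holder&Lipschitz} for smooth data with constants independent of $\varepsilon$.

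\emph{Lipschitz bound.} The key kernel facts are the Gaussian bounds for the fundamental solution on Carnot groups, transferred to the torus by periodization:
\begin{equation*}
|D_{\mathcal{X}}^x p_\tau(x,y)|\le C\tau^{-\frac{Q+1}{2}}e^{-c\,d_{cc}(x,y)^2/\tau},\qquad \int_{\mathbb{T}_{\mathbb{G}}}D_{\mathcal{X}}^xp_\tau(x,y)\,dy=0,
\end{equation*}
where $Q$ is the homogeneous dimension. The vanishing integral comes from conservation of mass.

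For the terminal term, I write
\begin{equation*}
D_{\mathcal{X}}P_\tau z_T(x)=\int D^x_{\mathcal{X}}p_\tau(x,y)\bigl(z_T(y)-z_T(x)\bigr)dy,
\end{equation*}
and bound $|z_T(y)-z_T(x)|\le L\,d_{cc}(x,y)$ with $L=[z_T]_{C^{0+1}_{\mathcal{X}}}$. The Gaussian moment then gives $|D_{\mathcal{X}}P_\tau z_T|\le C\tau^{-1/2}L\,\tau^{1/2}=CL$. For a source $g$, the same kernel bound gives $\|D_{\mathcal{X}}P_\tau g\|_\infty\le C\tau^{-1/2}\|g\|_\infty$. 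Setting $\phi(t)=\|D_{\mathcal{X}}z(t)\|_\infty$, I obtain
\begin{equation*}
\phi(t)\le CL+C\int_t^T(s-t)^{-1/2}\bigl(\|f\|_\infty+\|b\|_\infty\phi(s)\bigr)ds.
\end{equation*}
A Gronwall lemma for the weakly singular kernel $(s-t)^{-1/2}$ (iterate once to get an integrable kernel) yields $\sup_t\phi(t)\le C(L+\|f\|_\infty)$. Since $d_{cc}$ is realized by horizontal curves, this is exactly the bound on $\sup_t[z(t)]_{C^{0+1}_{\mathcal{X}}}$.

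\emph{Time H\"older bound.} For $t<t'$, the semigroup property gives
\begin{equation*}
z(t)=P_{t'-t}z(t')+\int_t^{t'}P_{s-t}\bigl(f-b\cdot D_{\mathcal{X}}z\bigr)(s)\,ds.
\end{equation*}
The integral term is bounded in sup norm by $(t'-t)\bigl(\|f\|_\infty+\|b\|_\infty\sup\phi\bigr)$. For the first term, the Lipschitz bound just obtained gives
\begin{equation*}
|P_\tau g(x)-g(x)|\le\int p_\tau(x,y)\,|g(y)-g(x)|\,dy\le[g]_{C^{0+1}_{\mathcal{X}}}\int p_\tau(x,y)\,d_{cc}(x,y)\,dy\le C[g]_{C^{0+1}_{\mathcal{X}}}\tau^{1/2},
\end{equation*}
applied with $g=z(t')$. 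Combining gives \eqref{LHJB_Holder&Lipschitz}.

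The main obstacle is the kernel step. One must justify the Gaussian bound on the horizontal gradient of the periodized kernel on $\mathbb{T}_{\mathbb{G}}$ uniformly for $\tau\in(0,T]$, since the sum over $\mathbb{Z}^n$ must converge along with its $\mathcal{X}$-derivatives. One must also check the zero-mean property. I would derive both from the group-level estimates for $\Gamma_0$ (homogeneity and left-invariance) together with the fact that $\mathbb{Z}^n$ is a lattice acting by left translation, exactly as in the representation of Lemma \ref{Lem_LHJB well-posedness}.
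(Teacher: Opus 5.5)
The paper gives no proof of this lemma; it is quoted from \cite[Theorem 1.2]{JWY}, so your proposal has to be judged on its own.

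\textbf{What works.} The quantitative core is sound. The Duhamel formula around the periodized sub-Laplacian heat semigroup, the Gaussian bound for $D^x_{\mathcal X}p_\tau$ combined with $\int_{\mathbb T_{\mathbb G}} D^x_{\mathcal X}p_\tau(x,y)\,dy=0$, the weakly singular Gronwall lemma, and the semigroup identity between $t$ and $t'$ together give \eqref{LHJB_Holder&Lipschitz} for smooth terminal data. The constant depends only on $\mathbb G$, $T$ and $\|b\|_{L^\infty}$, as claimed. (The zero-mean property is really $P_\tau 1=1$, not conservation of mass, but both hold because $\Delta_{\mathcal X}$ is symmetric.)

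\textbf{A harmless correction to the mollification.} Functions on $\mathbb T_{\mathbb G}$ are invariant under left translations, so the mollification must be the right convolution $z_T*\phi_\varepsilon$. That convolution controls the horizontal Lipschitz seminorm only up to a constant, as in \cite[Proposition 1.2]{JWY}, not with constant $1$. The simplest choice is $z_T^\varepsilon:=P_\varepsilon z_T$. Your own bounds then give $[z_T^\varepsilon]_{C^{0+1}_{\mathcal X}}\le C[z_T]_{C^{0+1}_{\mathcal X}}$ and $\|z_T^\varepsilon-z_T\|_{L^\infty}\le C[z_T]_{C^{0+1}_{\mathcal X}}\varepsilon^{1/2}$.

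\textbf{The genuine gap: existence for Lipschitz data.} The unproved step is the existence of a solution in $C^{1,2}_{\mathcal X}([0,T)\times\mathbb T_{\mathbb G})$ when $z_T$ is only Lipschitz. Lemma \ref{Lem_LHJB wellposed regularity} has no smoothing effect. It only propagates $C^{k+\alpha}_{\mathcal X}$ regularity already present in the terminal datum. Its case $k=1$ needs $C^{1+\alpha}_{\mathcal X}$ data and yields only $C^{1+\alpha}_{\mathcal X}$ bounds. Its case $k=2$ needs $b,f\in B([0,T];C^{1+\alpha}_{\mathcal X})$, whereas here they are only $C^{\alpha}_{\mathcal X}$ in space. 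Applying it ``from time $T-\delta$'' presupposes a bound on $\|z^\varepsilon(T-\delta)\|_{C^{2+\alpha}_{\mathcal X}}$ uniform in $\varepsilon$, which is exactly what is missing. So the asserted $C^{1,2}_{\mathcal X}$ compactness away from $T$ is not justified.

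\textbf{How to close it.} The smoothing has to be proved from the representation formula. This needs Gaussian bounds for horizontal derivatives of $p_\tau$ up to order three, and a two-step bootstrap.
\begin{enumerate}
\item Since $g^\varepsilon:=f-b\cdot D_{\mathcal X}z^\varepsilon$ is uniformly bounded, interpolating the first- and second-order kernel bounds gives $[D_{\mathcal X}z^\varepsilon(t)]_{C^{\beta}_{\mathcal X}}\le C\bigl(1+(T-t)^{-\beta/2}\bigr)$ for $\beta<1$.
\item Then $g^\varepsilon(s)\in C^{\gamma}_{\mathcal X}$ with $\gamma=\min\{\alpha,\beta\}$ and an integrable weight. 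The cancellation $\int_{\mathbb T_{\mathbb G}} X^x_iX^x_jp_\tau(x,y)\,dy=0$ yields $\sup_{\varepsilon}\sup_{t\le T-\delta}\|z^\varepsilon(t)\|_{C^{2+\gamma'}_{\mathcal X}}<\infty$ for $\gamma'<\gamma$.
\end{enumerate}
You then do not need time-equicontinuity of second derivatives. The limit satisfies $z(T-\delta)\in C^{2+\gamma'}_{\mathcal X}$. Lemma \ref{Lem_LHJB well-posedness}, used with exponent $\gamma'$, gives a classical solution on $[0,T-\delta]$ with terminal datum $z(T-\delta)$. The maximum principle identifies it with $\lim_\varepsilon z^\varepsilon$ there.

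\textbf{Uniqueness.} Solutions in the stated class are classical only on $[0,T)$. So uniqueness should be obtained by applying the maximum principle on $[0,T-\delta]$ and letting $\delta\to 0$, using continuity at $t=T$.
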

\begin{Def}[Weak solution of the FPK equation]\label{Def_general FPK weak sol.}
Let $k\in\mathbb{Z}_+$ and $\alpha\in(0,1)$. Assume $b \in C_{\mathcal{X}}^{\frac{\alpha}{2},k-1+\alpha}\left([0,T]\times\mathbb{T}_{\mathbb{G}};\mathbb{R}^{n_1}\right)$, $\upsilon \in L^1\left([0,T];C_{\mathcal{X}}^{-k}\left([0,1)^n\right)\cap C_{\mathcal{X}}^{-(k+\alpha)}(\mathbb{T}_{\mathbb{G}})\right)$ and $\rho_0 \in C_{\mathcal{X}}^{-k}\left([0,1)^n\right)\cap C_{\mathcal{X}}^{-(k+\alpha)}(\mathbb{T}_{\mathbb{G}})$. For a given function $\rho \in C\left([0,T];C_\mathcal{X}^{-(k+\alpha)}(\mathbb{T}_{\mathbb{G}})\right)$, if for all $f \in C\left([0,t]\times\mathbb{T}_{\mathbb{G}}\right)\cap B\left([0,t];C_\mathcal{X}^{k+\alpha}(\mathbb{T}_{\mathbb{G}})\right)$, $\xi \in C_\mathcal{X}^{k+\alpha}(\mathbb{T}_{\mathbb{G}})$ and the solution $z \in C_{\mathcal{X}}^{1,2}\left([0,t)\times\mathbb{T}_{\mathbb{G}}\right) \cap C\left([0,t]\times\mathbb{T}_{\mathbb{G}}\right)$ to the linear equation as follows
\begin{equation}\label{dual FPK}
\begin{cases}
-\partial_tz-\Delta_\mathcal{X}z+b\cdot D_{\mathcal{X}}z=f, & \text{ in } [0,t) \times \mathbb{T}_{\mathbb{G}}, \\
z(t)=\xi, & \text{ in } \mathbb{T}_{\mathbb{G}},
\end{cases}
\end{equation}
the following weak formulation holds true:
\begin{equation}\label{general FPK_weak formulation}
\left\langle\rho(t),\xi\right\rangle +\int_{0}^{t} \left\langle \rho(s),f(s,\cdot) \right\rangle ds =\left\langle\rho_0,z(0,\cdot)\right\rangle +\int_{0}^{t}\left\langle \upsilon(s),z(s,\cdot)\right\rangle ds,
\end{equation}
then we say that $\rho$ is a weak solution to equation \eqref{general FPK}. Here, $C_{\mathcal{X}}^{-k}\left([0,1)^n\right)$ and $C_{\mathcal{X}}^{-(k+\alpha)}(\mathbb{T}_{\mathbb{G}})$ are dual spaces of $C_{\mathcal{X}}^{k}\left([0,1)^n\right)$ and $C_{\mathcal{X}}^{k+\alpha}(\mathbb{T}_{\mathbb{G}})$ respectively, and $\left\langle \cdot,\cdot \right\rangle$ denotes the duality between $C_\mathcal{X}^{-(k+\alpha)}(\mathbb{T}_{\mathbb{G}})$ and $C_\mathcal{X}^{k+\alpha}(\mathbb{T}_{\mathbb{G}})$.
\end{Def}
\begin{Lem}[see {\cite[Theorem 1.3]{JWY}}]\label{Lem_general FPK wellposed regularity}
Let $k\in\mathbb{Z}_+$ and $\alpha\in(0,1)$. Assume $b \in C_{\mathcal{X}}^{\frac{\alpha}{2},k-1+\alpha}\left([0,T]\times\mathbb{T}_{\mathbb{G}};\mathbb{R}^{n_1}\right)$, $\upsilon \in L^1\left([0,T];C_{\mathcal{X}}^{-k}\left([0,1)^n\right)\cap C_{\mathcal{X}}^{-(k+\alpha)}(\mathbb{T}_{\mathbb{G}})\right)$ and $\rho_0 \in C_{\mathcal{X}}^{-k}\left([0,1)^n\right)\cap C_{\mathcal{X}}^{-(k+\alpha)}(\mathbb{T}_{\mathbb{G}})$. Then there exists a unique weak solution $\rho$ in the sense of Definition \ref{Def_general FPK weak sol.} to the equation \eqref{general FPK}, satisfying
\begin{equation}\label{general FPK sol. regularity}
\sup_{t \in [0,T]}\left\|\rho(t)\right\|_{C_{\mathcal{X}}^{-(k+\alpha)}(\mathbb{T}_{\mathbb{G}})} \leq C\left(\left\|\rho_0\right\|_{C_{\mathcal{X}}^{-(k+\alpha)}(\mathbb{T}_{\mathbb{G}})} + \left\|\upsilon\right\|_{L^1\left([0,T];C_{\mathcal{X}}^{-(k+\alpha)}(\mathbb{T}_{\mathbb{G}})\right)}\right),
\end{equation}
where the constant $C>0$ depends on $\mathbb{G}$, $\alpha$, $k$, $T$ and $\sup_{t\in(0,T)}\|b(t,\cdot)\|_{C_{\mathcal{X}}^{k-1+\alpha}\left(\mathbb{T}_{\mathbb{G}};\mathbb{R}^{n_1}\right)}$ only.

Moreover, the solution is stable: if $b^{i} \to b$ in $C_{\mathcal{X}}^{\frac{\alpha}{2},k-1+\alpha}\left([0,T] \times \mathbb{T}_{\mathbb{G}};\mathbb{R}^{n_1}\right)$, $\upsilon^{i} \to \upsilon$ in $L^1\left([0,T];C_{\mathcal{X}}^{-(k+\alpha)}(\mathbb{T}_{\mathbb{G}})\right)$ and $\rho_{0}^{i} \to \rho_{0}$ in $C_{\mathcal{X}}^{-(k+\alpha)}(\mathbb{T}_{\mathbb{G}})$ as $i\to +\infty$, with $\upsilon^{i}\in L^1\left([0,T];C_{\mathcal{X}}^{-k}\left([0,1)^n\right)\right)$ and $\rho_{0}^{i}\in C_{\mathcal{X}}^{-k}\left([0,1)^n\right)$, then, calling $\rho^{i}$ and $\rho$ the solutions related to $\left(\rho_{0}^{i},b^{i},\upsilon^{i}\right)$ and $\left(\rho_{0},b,\upsilon\right)$ respectively, we have $\rho^{i} \to \rho$ in $C\left([0,T];C_{\mathcal{X}}^{-(k+\alpha)}(\mathbb{T}_{\mathbb{G}})\right)$ as $i\to +\infty$.
\end{Lem}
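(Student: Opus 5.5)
The plan is a duality argument built on the backward linear equation \eqref{dual FPK}, for which Lemma \ref{Lem_LHJB wellposed regularity} provides everything needed. Fix $t\in(0,T]$. For $\xi\in C_{\mathcal{X}}^{k+\alpha}(\mathbb{T}_{\mathbb{G}})$, let $z^{t,\xi}$ be the solution of \eqref{dual FPK} on $[0,t]$ with $f=0$ and $z^{t,\xi}(t)=\xi$. By \eqref{LHJB_USE},
\[
\sup_{s\in[0,t]}\|z^{t,\xi}(s,\cdot)\|_{C_{\mathcal{X}}^{k+\alpha}(\mathbb{T}_{\mathbb{G}})}\le C\|\xi\|_{C_{\mathcal{X}}^{k+\alpha}(\mathbb{T}_{\mathbb{G}})},
\]
where $C$ depends only on $\mathbb{G},\alpha,k,T$ and $\sup_t\|b(t,\cdot)\|_{C_{\mathcal{X}}^{k-1+\alpha}}$. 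I then \emph{define}
\[
\langle\rho(t),\xi\rangle:=\langle\rho_0,z^{t,\xi}(0,\cdot)\rangle+\int_0^t\langle\upsilon(s),z^{t,\xi}(s,\cdot)\rangle\,ds .
\]
The map $\xi\mapsto z^{t,\xi}$ is linear by uniqueness in Lemma \ref{Lem_LHJB wellposed regularity}, so $\rho(t)$ is a linear functional. The bound above gives $\|\rho(t)\|_{C_{\mathcal{X}}^{-(k+\alpha)}}\le C(\|\rho_0\|_{C_{\mathcal{X}}^{-(k+\alpha)}}+\|\upsilon\|_{L^1(C_{\mathcal{X}}^{-(k+\alpha)})})$, which is \eqref{general FPK sol. regularity}. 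The assumption that $\rho_0,\upsilon(s)$ also lie in $C_{\mathcal{X}}^{-k}([0,1)^n)$ is used only to make sense of the pairings with the $z$'s, viewed as periodic functions on the fundamental cell; these are $C_{\mathcal{X}}^{k}$ there.

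Next I check that this $\rho$ satisfies the full weak formulation \eqref{general FPK_weak formulation} for a general source $f$. I use Duhamel's principle at the level of the dual equation: the solution $z$ of \eqref{dual FPK} with data $(f,\xi)$ decomposes as
\[
z(s)=z^{t,\xi}(s)+\int_s^t z^{\tau,f(\tau)}(s)\,d\tau,
\]
by uniqueness in Lemma \ref{Lem_LHJB wellposed regularity}. I insert this into the right-hand side of \eqref{general FPK_weak formulation} and apply Fubini, which is justified by the uniform $C_{\mathcal{X}}^{k+\alpha}$ bounds and continuity of $f$. The contribution of the $z^{\tau,f(\tau)}$ terms becomes $\int_0^t\langle\rho(\tau),f(\tau)\rangle d\tau$ by the definition of $\rho(\tau)$, which yields the identity.

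Uniqueness is immediate from this formulation. If $\rho_1,\rho_2$ are two weak solutions, their difference satisfies \eqref{general FPK_weak formulation} with zero data. Choosing $f=0$ gives $\langle\rho_1(t)-\rho_2(t),\xi\rangle=0$ for every $\xi$ and every $t$.

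The step I expect to be the main obstacle is continuity $\rho\in C([0,T];C_{\mathcal{X}}^{-(k+\alpha)})$, because it must hold uniformly over the unit ball of $\xi$. For $t'<t$ I use the semigroup identity $z^{t,\xi}(s)=z^{t',z^{t,\xi}(t')}(s)$ for $s\le t'$. This gives
\[
\langle\rho(t)-\rho(t'),\xi\rangle=\langle\rho(t'),z^{t,\xi}(t')-\xi\rangle+\int_{t'}^{t}\langle\upsilon(s),z^{t,\xi}(s)\rangle\,ds.
\]
The integral term is $o(1)$ uniformly in $\xi$, by integrability of $\upsilon$. For the first term, \eqref{LHJB_HSE 2} controls $\|z^{t,\xi}(t')-\xi\|_{C_{\mathcal{X}}^{k+\alpha}}$ by $|t-t'|^{1/2}\|\xi\|_{C_{\mathcal{X}}^{k+1+\alpha}}$, which does not suffice directly. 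I would handle this by interpolation: I would work with the pairing at a weaker level, exploiting the extra $C_{\mathcal{X}}^{-k}$ regularity of the data, or use a compactness argument. The latter relies on the compact embedding of the $C_{\mathcal{X}}^{k+\alpha}$-ball into $C_{\mathcal{X}}^{k+\alpha'}$ with $\alpha'<\alpha$, applied after rerunning the construction with $\alpha'$.

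Finally, stability. I compare the dual solutions $z^i$ and $z$ for $b^i$ and $b$. The difference $w=z^i-z$ solves \eqref{dual FPK} with drift $b^i$, terminal datum $0$ and source $(b-b^i)\cdot D_{\mathcal{X}}z$. That source is bounded in $C_{\mathcal{X}}^{k-1+\alpha}$ by $\|b-b^i\|\,\|z\|_{C_{\mathcal{X}}^{k+\alpha}}$, so \eqref{LHJB_USE} gives $\sup_s\|w(s)\|_{C_{\mathcal{X}}^{k+\alpha}}\to0$ uniformly over the unit ball of $\xi$. Combining this with the convergence $\rho_0^i\to\rho_0$ and $\upsilon^i\to\upsilon$ in the representation formula yields $\rho^i\to\rho$ in $C([0,T];C_{\mathcal{X}}^{-(k+\alpha)})$.
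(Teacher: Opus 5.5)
The paper does not prove this lemma; it imports it from \cite[Theorem 1.3]{JWY}, so your argument has to stand on its own. Your transposition scheme is the natural one, and the bound, uniqueness and stability parts are fine: define $\rho(t)$ through the dual problem, read off \eqref{general FPK sol. regularity} from \eqref{LHJB_USE}, get uniqueness from $f=0$, and get stability from the drift-perturbation estimate for $z^i-z$.

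\textbf{The gap.} The step you flag yourself, continuity of $t\mapsto\rho(t)$ in $C_{\mathcal{X}}^{-(k+\alpha)}(\mathbb{T}_{\mathbb{G}})$, is never proved. It is part of the definition of weak solution, and it is not a technicality: it is exactly where the hypothesis $\rho_0,\upsilon\in C_{\mathcal{X}}^{-k}([0,1)^n)$ must be used, and your argument as written never uses it.

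Without that hypothesis the conclusion fails. Take $b=0$, $\upsilon=0$, and $\rho_0\neq0$ in $C_{\mathcal{X}}^{-(k+\alpha)}(\mathbb{T}_{\mathbb{G}})$ vanishing on $C^\infty(\mathbb{T}_{\mathbb{G}})$.
\begin{itemize}
\item Such a $\rho_0$ exists by Hahn--Banach, because the closure of smooth functions in $C_{\mathcal{X}}^{k+\alpha}$ is a proper ``little H\"older'' subspace. For instance, $\xi(x)=\psi(x_1)$ with $\psi$ $1$-periodic and $\psi^{(k)}(s)=|s|^\alpha$ near $0$ lies outside it.
\item By hypoellipticity, $z^{t,\xi}(0)\in C^\infty$ for every $t>0$. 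Your formula therefore forces $\rho(t)=0$ for all $t>0$, while $\rho(0)=\rho_0\neq0$.
\item Such a $\rho_0$ is not bounded for the $C_{\mathcal{X}}^{k}$ norm, since smooth functions are $C_{\mathcal{X}}^{k}$-dense in $C_{\mathcal{X}}^{k+\alpha}$. Excluding it is precisely the job of the extra hypothesis.
\end{itemize}
So your remark that the hypothesis only serves ``to make sense of the pairings'' is mistaken; those pairings are already defined in $C_{\mathcal{X}}^{-(k+\alpha)}\times C_{\mathcal{X}}^{k+\alpha}$. Relatedly, in your Duhamel step, ``uniform bounds and continuity of $f$'' do not justify moving a general element of the dual under the $\tau$-integral. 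The map $\tau\mapsto f(\tau)$ need not be norm-continuous in $C_{\mathcal{X}}^{k+\alpha}$.

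\textbf{A fix.} Put the time difference on the data rather than on $\rho(t')$, after approximating the data.
\begin{itemize}
\item Since $\rho_0$ and $\upsilon(s)$ are bounded on $C_{\mathcal{X}}^{k}$, and a suitable mollification moves the unit ball of $C_{\mathcal{X}}^{k+\alpha}$ by $o(1)$ uniformly in $C_{\mathcal{X}}^{k}$, there are smooth $\rho_0^\varepsilon,\upsilon^\varepsilon$ with $\rho_0^\varepsilon\to\rho_0$ in $C_{\mathcal{X}}^{-(k+\alpha)}$ and $\upsilon^\varepsilon\to\upsilon$ in $L^1([0,T];C_{\mathcal{X}}^{-(k+\alpha)})$.
\item For such data, take $t'<t$ and $\eta:=z^{t,\xi}(t')-\xi$. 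Then $z^{t,\xi}(s)-z^{t',\xi}(s)=z^{t',\eta}(s)$ for $s\le t'$.
\item The maximum principle gives $\|z^{t',\eta}(s)\|_{L^\infty}\le\|\eta\|_{L^\infty}$, and Lemma~\ref{Lem_LHJB Lipschitz} gives $\|\eta\|_{L^\infty}\le C|t-t'|^{1/2}\|\xi\|_{C_{\mathcal{X}}^{k+\alpha}}$.
\item Pairing with $L^1$ densities then shows the corresponding $\rho^\varepsilon$ is continuous into $C_{\mathcal{X}}^{-(k+\alpha)}$. The extra term $\int_{t'}^{t}\langle\upsilon^\varepsilon(s),z^{t,\xi}(s)\rangle ds$ is controlled by integrability of $\upsilon^\varepsilon$.
\item Your $\rho$ is linear in $(\rho_0,\upsilon)$, so estimate \eqref{general FPK sol. regularity} gives $\rho^\varepsilon\to\rho$ uniformly on $[0,T]$, and $\rho$ is continuous.
\item The same limit also proves the weak formulation for general $f$. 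Fubini is elementary for function-valued data, and both sides of \eqref{general FPK_weak formulation} are continuous in the data.
\end{itemize}
Your compactness variant can also be made to work. Note, however, that rerunning the construction with $\alpha'<\alpha$ needs $\rho_0,\upsilon\in C_{\mathcal{X}}^{-(k+\alpha')}$. That again comes from the $C_{\mathcal{X}}^{-k}$ hypothesis, not from membership in the larger space $C_{\mathcal{X}}^{-(k+\alpha)}$.
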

With Propositions \ref{Prop_MFG system wellposed regularity}-\ref{Prop_Lip. ctn. of U} in place, we can proceed to investigate Proposition \ref{Prop_U C^1 w.r.t. m}:
\begin{Prop}[$C^1$ differentiability of $U$ in $m_0$]\label{Prop_U C^1 w.r.t. m}
Let assumptions \ref{assum1}-\ref{assum3} and the conclusions of Proposition \ref{Prop_MFG system wellposed regularity} hold. Fix any $t_{0} \in [0, T]$ and $m_{0}, \hat{m}_{0} \in \mathcal{P}\left(\mathbb{T}_{\mathbb{G}}\right)$, let $(u,m)$ and $(\hat{u}, \hat{m})$ be the solutions to the MFG system \eqref{MFG system} with initial conditions $\left(t_{0}, m_{0}\right)$ and $\left(t_{0},\hat{m}_{0}\right)$ respectively, and $(z,\rho)$ be the solution to the system \eqref{linear MFG system} with initial condition $\left(t_{0}, \hat{m}_{0}-m_{0}\right)$, related to $(u,m)$. Then
\begin{align}\label{(hat u-u-z,hat m-m-rho)_regularity}
& \sup_{t \in\left[t_{0}, T\right]} \left(\left\|\hat{u}(t, \cdot)-u(t, \cdot)-z(t,\cdot)\right\|_{C_{\mathcal{X}}^{k+\alpha}\left(\mathbb{T}_{\mathbb{G}}\right)} +\left\|\hat{m}(t)-m(t)-\rho(t)\right\|_{C_\mathcal{X}^{-(k-1+\alpha)}(\mathbb{T}_{\mathbb{G}})}\right) \\
\leq & C d_{1}^{2}\left(\hat{m}_{0},m_{0}\right),\notag
\end{align}
where the constant $C>0$ does not depend on $t_0$, $\hat{m}_0$ and $m_0$.

Finally, let $U$ be the function defined in \eqref{U_Def}, then one can obtain the $C^1$ differentiability of $U(t_0,x,m_0)$ in $m_0$, with the derivative $\frac{\delta U}{\delta m}\left(t_0,x,m_0,y\right)$ satisfying
\begin{equation}\label{delta U/delta m_regularity}
\sup_{(t_0,m_0) \in [0,T] \times \mathcal{P}(\mathbb{T}_{\mathbb{G}})}\left\|\frac{\delta U}{\delta m}\left(t_0, \cdot, m_0, \cdot\right)\right\|_{C_{\mathcal{X},y}^{k-1+\alpha}\left(\mathbb{T}_{\mathbb{G}};C_{\mathcal{X},x}^{k+\alpha}\left(\mathbb{T}_{\mathbb{G}}\right)\right)} \leq C,
\end{equation}
and $\frac{\delta U}{\delta m}$ and its Lie derivatives with respect to $(x,y)$ are continuous on $[0,T] \times \mathbb{T}_{\mathbb{G}} \times \mathcal{P}\left(\mathbb{T}_{\mathbb{G}}\right) \times \mathbb{T}_{\mathbb{G}}$. Moreover,
\begin{align*}
& \left\|U\left(t_0, \cdot, \hat{m}_0\right)-U\left(t_0, \cdot, m_0\right)-\int_{\mathbb{T}_{\mathbb{G}}} \frac{\delta U}{\delta m}\left(t_0, \cdot, m_0, y\right) d\left(\hat{m}_0-m_0\right)(y)\right\|_{C_\mathcal{X}^{k+\alpha}(\mathbb{T}_{\mathbb{G}})} \\
\leq & C d_{1}^{2}\left(\hat{m}_0,m_0\right).
\end{align*}
Here, the constants $C>0$ depend on $\mathbb{G}$, $\alpha$, $k$, $T$, $c_{F}$, $c_{G}$ and $H$ only.
\end{Prop}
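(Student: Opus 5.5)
The plan follows the scheme of \cite[Section 3.4]{19CDLL}, adapted to the non-isotropic setting through Lemmas \ref{Lem_LHJB well-posedness}--\ref{Lem_general FPK wellposed regularity}. The linearized system \eqref{linear MFG system} (not yet displayed above) I take to be
\begin{equation*}
\begin{cases}
-\partial_t z-\Delta_{\mathcal{X}}z+D_pH(x,D_{\mathcal{X}}u)\cdot D_{\mathcal{X}}z=\frac{\delta F}{\delta m}(x,m(t))(\rho(t)),\\
\partial_t\rho-\Delta_{\mathcal{X}}\rho-\operatorname{div}_{\mathcal{X}}\left(\rho D_pH(x,D_{\mathcal{X}}u)\right)-\operatorname{div}_{\mathcal{X}}\left(mD^2_{pp}H(x,D_{\mathcal{X}}u)D_{\mathcal{X}}z\right)=0,\\
z(T)=\frac{\delta G}{\delta m}(x,m(T))(\rho(T)),\quad \rho(t_0)=\hat m_0-m_0,
\end{cases}
\end{equation*}
and I assume its well-posedness (existence, uniqueness, and the bound $\sup_t\|z(t)\|_{C^{k+\alpha}_{\mathcal{X}}}+\sup_t\|\rho(t)\|_{C^{-(k-1+\alpha)}_{\mathcal{X}}}\le C\|\rho_0\|_{C^{-(k-1+\alpha)}_{\mathcal{X}}}$), since it is proved in the preceding part of the paper. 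That part obtains it by a fixed point argument using Lemma \ref{Lem_LHJB wellposed regularity} for $z$ and Lemma \ref{Lem_general FPK wellposed regularity} for $\rho$. The a priori estimate comes from the duality identity $\frac{d}{dt}\langle\rho,z\rangle$, which yields $\int m\,D_{pp}^2H D_{\mathcal{X}}z\cdot D_{\mathcal{X}}z\le C\|\rho_0\|\,\|z(t_0)\|$ thanks to \eqref{property_F}, \eqref{property_G} and \ref{assum3}. The same argument applies to systems with extra source terms $R_1$ in the HJB equation, $R_2$ in the FPK equation (in divergence form) and $R_3$ in the terminal condition, giving
\[
\sup_t\|z\|_{k+\alpha}+\sup_t\|\rho\|_{-(k-1+\alpha)}\le C\left(\|\rho_0\|_{-(k-1+\alpha)}+\sup_t\|R_1\|_{k-1+\alpha}+\|R_2\|_{L^1(C^{-(k+\alpha)})}+\|R_3\|_{k+\alpha}\right).
\]

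\textbf{Step 1 (remainder estimate).} Set $w=\hat u-u-z$ and $\mu=\hat m-m-\rho$. Subtracting the equations shows that $(w,\mu)$ solves the same linear system with zero initial datum and the following remainders:
\begin{align*}
R_1&=-\int_0^1\!\big(D_pH(x,D_{\mathcal{X}}u+s D_{\mathcal{X}}(\hat u-u))-D_pH(x,D_{\mathcal{X}}u)\big)ds\cdot D_{\mathcal{X}}(\hat u-u)\\
&\quad+\int_0^1\Big(\tfrac{\delta F}{\delta m}(x,m+s(\hat m-m))-\tfrac{\delta F}{\delta m}(x,m)\Big)(\hat m-m)\,ds,
\end{align*}
$R_2=\operatorname{div}_{\mathcal{X}}\big((\hat m-m)D^2_{pp}H\,D_{\mathcal{X}}(\hat u-u)\big)+\operatorname{div}_{\mathcal{X}}\big(\hat m\,O(|D_{\mathcal{X}}(\hat u-u)|^2)\big)$, and $R_3$ is the analogue of the second term of $R_1$ built from $G$. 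For quadratic $H$ the remainders simplify: $R_1=-\tfrac12|D_{\mathcal{X}}(\hat u-u)|^2+\dots$ and $R_2=\operatorname{div}_{\mathcal{X}}\big((\hat m-m)D_{\mathcal{X}}(\hat u-u)\big)$. Proposition \ref{Prop_Lip. ctn. of U} gives $\|\hat u-u\|_{k+\alpha}+d_1(\hat m,m)\le Cd_1(\hat m_0,m_0)$. The Lipschitz bounds on $\frac{\delta F}{\delta m}$ and $\frac{\delta G}{\delta m}$ in \ref{assum1}--\ref{assum2} then control the measure parts of $R_1$ and $R_3$ by $Cd_1^2$. These bounds must be paired with $\hat m-m$ through its $C^{-(k-1+\alpha)}_{\mathcal{X}}$ norm, which is dominated by $d_1$ because $C^{k-1+\alpha}_{\mathcal{X}}$ functions are $d_{cc}$-Lipschitz when $k\ge 2$. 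For $R_2$ I test against $\phi\in C^{k+\alpha}_{\mathcal{X}}$ and integrate by parts once: $\langle R_2,\phi\rangle=-\int D_{\mathcal{X}}\phi\cdot D_{\mathcal{X}}(\hat u-u)\,d(\hat m-m)$, which is bounded by $Cd_1(\hat m,m)\,\|\hat u-u\|_{k+\alpha}\le Cd_1^2$ because the integrand is $d_{cc}$-Lipschitz. Applying the linear estimate above then gives \eqref{(hat u-u-z,hat m-m-rho)_regularity}.

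\textbf{Step 2 (representation of $\frac{\delta U}{\delta m}$).} By linearity, $z(t_0,x)=\langle \hat m_0-m_0,K(t_0,x,m_0,\cdot)\rangle$ for some kernel $K$. To construct it, take $\rho_0=\delta_y$ (interpreted with $X$-derivatives in $y$ as needed) and define $K(t_0,x,m_0,y)=z^{(y)}(t_0,x)$, the $z$-component of the solution of the linear system with initial datum $(t_0,\delta_y)$. Applying the estimate to the difference quotients $\rho_0=(\delta_{y'}-\delta_y)$ and to $X_I^y$-derivatives of the Dirac mass yields the bound \eqref{delta U/delta m_regularity} in $C^{k-1+\alpha}_{\mathcal{X},y}(C^{k+\alpha}_{\mathcal{X},x})$. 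This uses $\|X_I\delta_y\|_{C^{-(k-1+\alpha)}}\le C$ for $|I|\le k-1$, together with Hölder increments. Normalizing by subtracting $\int K\,dm_0$ produces $\frac{\delta U}{\delta m}$, and the last inequality of the proposition follows from Step 1 with $U=u(t_0,\cdot)$.

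\textbf{Step 3 (continuity).} Continuity of $K$ and its Lie derivatives in $(t_0,m_0,x,y)$ follows from the stability part of Proposition \ref{Prop_MFG system wellposed regularity}, the stability statement in Lemma \ref{Lem_general FPK wellposed regularity}, and a compactness argument based on the uniform bounds. The main obstacle is the well-posedness of the linearized coupled system in negative non-isotropic norms with measure-valued data, in particular the uniqueness and duality step. A secondary difficulty is the time dependence on $t_0$, handled using the time-Hölder estimates \eqref{LHJB_HSE 1}--\eqref{LHJB_HSE 2}.
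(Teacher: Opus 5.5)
Your proposal follows the paper's proof essentially step for step: the paper also sets $v=\hat u-u-z$ and $\mu=\hat m-m-\rho$, bounds the remainders $f$, $\Upsilon$, $g$ by $Cd_1^2(\hat m_0,m_0)$ using Proposition~\ref{Prop_Lip. ctn. of U} and the Lipschitz bounds on $\frac{\delta F}{\delta m}$, $\frac{\delta G}{\delta m}$, feeds them into the linear estimate of Lemma~\ref{Lem_GL system regularity}, and then identifies $\frac{\delta U}{\delta m}$ with the Dirac-mass kernel $K$ of Lemma~\ref{Lem_relation of z and rho_0}, with continuity coming from stability. The one thing to correct is an index: the bound on $\rho$ (and $\mu$) must hold in $C^{-(k-1+\alpha)}_{\mathcal{X}}(\mathbb{T}_{\mathbb{G}})$, since that is the space that pairs with $\frac{\delta F}{\delta m}(x,m,\cdot)\in C^{k-1+\alpha}_{\mathcal{X}}$, so the divergence remainder has to be controlled in $L^1([t_0,T];C^{-(k-1+\alpha)}_{\mathcal{X}}(\mathbb{T}_{\mathbb{G}}))$ rather than in $C^{-(k+\alpha)}_{\mathcal{X}}$; running your duality bound with test functions $\phi\in C^{k-1+\alpha}_{\mathcal{X}}$ still gives $Cd_1^2$, because $D_{\mathcal{X}}\phi\in C^{k-2+\alpha}_{\mathcal{X}}$ is $d_{cc}$-Lipschitz once $k\ge 3$ (this Kantorovich pairing is also what the paper's bound on $\|\Upsilon\|_{L^1}$ implicitly relies on, since $\hat m-m$ is small in $d_1$ but not in total variation).
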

To prove Proposition \ref{Prop_U C^1 w.r.t. m}, we shall construct the $C^1$ derivative of $U$ with respect to the measure in Lemma \ref{Lem_relation of z and rho_0} below. To do this, we differentiate the MFG system \eqref{MFG system} to get the linearized MFG system, which is a coupled system of a linear degenerate backward equation and a degenerate FPK equation as follows:
\begin{equation}\label{linear MFG system}
\begin{cases}
-\partial_t z-\Delta_{\mathcal{X}} z+D_pH(x,D_{\mathcal{X}}u) \cdot D_{\mathcal{X}} z=\frac{\delta F}{\delta m}(x,m(t))(\rho(t)),& \text {in }[t_0, T] \times \mathbb{T}_{\mathbb{G}}, \\
\partial_t\rho-\Delta_\mathcal{X}\rho-\operatorname{div}_{\mathcal{X}}(\rho D_pH(x,D_{\mathcal{X}}u)) =\operatorname{div}_{\mathcal{X}}(m D_{pp}^2H(x,D_{\mathcal{X}}u) D_{\mathcal{X}}z),& \text {in }[t_0, T] \times \mathbb{T}_{\mathbb{G}}, \\
z(T, x)=\frac{\delta G}{\delta m}(x,m(T))(\rho(T)),\quad \rho(t_0)=\rho_0,& \text {in }\mathbb{T}_{\mathbb{G}},
\end{cases}
\end{equation}
where $(u,m)$ is the solution to the MFG system \eqref{MFG system} with initial condition $m(t_0)=m_0$ for any fixed $(t_0,m_0) \in [0,T] \times \mathcal{P}(\mathbb{T}_{\mathbb{G}})$, and $\rho_0$ can be supposed in a suitable space.

We aim at proving that $U$ is of class $C^1$ in $m_0$ satisfying
\begin{equation*}
z(t_0,x)=\int_{\mathbb{T}_{\mathbb{G}}}\frac{\delta U}{\delta m}(t_0,x,m_0,y)d \rho_0(y)=:\frac{\delta U}{\delta m}(t_0,x,m_0)(\rho_0).
\end{equation*}
That is the following lemma.
\begin{Lem}\label{Lem_relation of z and rho_0}
Let assumptions \ref{assum1}-\ref{assum3} and the conclusions of Proposition \ref{Prop_MFG system wellposed regularity} hold, then, for any $\left(t_{0}, m_{0}\right) \in [0,T]\times\mathcal{P}\left(\mathbb{T}_{\mathbb{G}}\right)$, there exists a $C_{\mathcal{X}}^{k+\alpha}(\mathbb{T}_{\mathbb{G}}) \times C_{\mathcal{X}}^{k-1+\alpha}(\mathbb{T}_{\mathbb{G}})$ map $(x, y) \mapsto K\left(t_{0}, x, m_{0}, y\right)$ such that, for any $\rho_{0} \in C_{\mathcal{X}}^{-k}\left([0,1)^n\right)\cap C_{\mathcal{X}}^{-(k-1+\alpha)}(\mathbb{T}_{\mathbb{G}})$, the $z$ component of the solution to the system \eqref{linear MFG system} is given by
\begin{equation}\label{linear MFG system_rep.formula}
z\left(t_{0}, x\right)=\left\langle\rho_{0}, K\left(t_{0}, x, m_{0}, \cdot\right)\right\rangle.
\end{equation}
Moreover,
\begin{equation*}
\sup_{(t_0,m_0)\in[0,T]\times\mathcal{P}\left(\mathbb{T}_{\mathbb{G}}\right)}\left\|K\left(t_{0}, \cdot, m_{0}, \cdot\right)\right\|_{C_{\mathcal{X},y}^{k-1+\alpha}\left(\mathbb{T}_{\mathbb{G}};C_{\mathcal{X},x}^{k+\alpha}\left(\mathbb{T}_{\mathbb{G}}\right)\right)} \leq C,
\end{equation*}
where the constant $C>0$ depends on $\mathbb{G}$, $\alpha$, $k$, $T$, $c_F$, $c_G$ and $H$ only, and $K$ and its Lie derivatives with respect to $(x,y)$ are continuous on $[0,T] \times \mathbb{T}_{\mathbb{G}} \times \mathcal{P}\left(\mathbb{T}_{\mathbb{G}}\right) \times \mathbb{T}_{\mathbb{G}}$.
\end{Lem}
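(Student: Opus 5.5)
The plan follows the strategy of \cite[Proposition 3.4.3]{19CDLL}. First I establish well-posedness of the linearized system \eqref{linear MFG system} with an a priori estimate that is linear in $\rho_0$. For fixed $(t_0,m_0)$ and $\rho_0\in C_{\mathcal{X}}^{-k}([0,1)^n)\cap C_{\mathcal{X}}^{-(k-1+\alpha)}(\mathbb{T}_{\mathbb{G}})$, I look for a solution $(z,\rho)$ with
\[
\sup_{t\in[t_0,T]}\left(\|z(t,\cdot)\|_{C_{\mathcal{X}}^{k+\alpha}}+\|\rho(t)\|_{C_{\mathcal{X}}^{-(k-1+\alpha)}}\right)\leq C\|\rho_0\|_{C_{\mathcal{X}}^{-(k-1+\alpha)}},
\]
where $C$ is uniform in $(t_0,m_0)$.

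Existence comes from a Leray--Schauder argument. Given $\tilde\rho$ in $C([t_0,T];C_{\mathcal{X}}^{-(k-1+\alpha)})$, solve the $z$-equation with data built from $\tilde\rho$ using Lemma \ref{Lem_LHJB wellposed regularity}. The drift $b=D_{\mathcal{X}}u$ lies in $B([t_0,T];C_{\mathcal{X}}^{k-1+\alpha})$ by Proposition \ref{Prop_MFG system wellposed regularity}, and the source $\frac{\delta F}{\delta m}(\cdot,m(t))(\tilde\rho(t))$ lies in $C_{\mathcal{X}}^{k-1+\alpha}$ by \ref{assum1}. Then solve the $\rho$-equation via Lemma \ref{Lem_general FPK wellposed regularity}, with source $\operatorname{div}_{\mathcal{X}}(mD_{\mathcal{X}}z)$ estimated in $C_{\mathcal{X}}^{-(k-1+\alpha)}$ by duality. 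Compactness of the map comes from the extra regularity of $z$ together with the time Hölder estimates \eqref{LHJB_HSE 1}--\eqref{LHJB_HSE 2}.

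The uniform bound uses the monotonicity conditions \eqref{property_F} and \eqref{property_G}. Pair $z$ with $\rho$ and integrate over $[t_0,T]$, which gives
\[
\langle \rho(T),\tfrac{\delta G}{\delta m}(\rho(T))\rangle+\int_{t_0}^T\langle\rho,\tfrac{\delta F}{\delta m}(\rho)\rangle+\int_{t_0}^T\!\!\int m|D_{\mathcal{X}}z|^2=\langle\rho_0,z(t_0)\rangle.
\]
This yields $\int\!\!\int m|D_{\mathcal{X}}z|^2\leq \|\rho_0\|\,\|z(t_0)\|_{C^{k-1+\alpha}_{\mathcal{X}}}$. I then close the estimate by first bounding $\rho$ via duality in terms of $\|mD_{\mathcal{X}}z\|_{L^1}\leq (\int\!\!\int m|D_{\mathcal{X}}z|^2)^{1/2}$, and then bounding $z$ via \eqref{LHJB_USE}. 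This produces an inequality of the form $X\leq C\|\rho_0\|+C\|\rho_0\|^{1/2}X^{1/2}$ for $X=\sup\|z\|+\sup\|\rho\|$. Uniqueness follows from linearity and the same estimate.

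Next I construct $K$. Set $K(t_0,x,m_0,y):=z^{(y)}(t_0,x)$, where $(z^{(y)},\rho^{(y)})$ solves \eqref{linear MFG system} with $\rho_0=\delta_y$. For $k\ge 2$ the Dirac mass lies in the required negative spaces. The map $\rho_0\mapsto z(t_0,\cdot)$ is linear and bounded from $C_{\mathcal{X}}^{-(k-1+\alpha)}$ into $C_{\mathcal{X}}^{k+\alpha}$. Regularity of $K$ in $y$ is obtained by taking difference quotients along the flows of $X_i$ and iterating. Specifically, for a multi-index $|I|\leq k-1$ and $\rho_0=(-1)^{\dim I}X_I^{*}\delta_y$, meaning the distribution $\phi\mapsto X_I\phi(y)$ and its Hölder increments in $y$, one gets $\|\rho_0\|_{C_{\mathcal{X}}^{-(k-1+\alpha)}}\lesssim 1$, with $d_{cc}(y,y')^{\alpha}$ control for Hölder increments. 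Linearity then gives $X^y_IK(t_0,\cdot,m_0,y)=z^{X_I\delta_y}(t_0,\cdot)$, bounded in $C_{\mathcal{X}}^{k+\alpha}$, together with the Hölder bound in $y$. The representation \eqref{linear MFG system_rep.formula} holds first for finite combinations of Dirac masses. It extends to general $\rho_0$ by density in the weak-$*$ sense, using the stability statement in Lemma \ref{Lem_general FPK wellposed regularity} and the linear estimate above; the hypothesis $\rho_0\in C^{-k}_{\mathcal{X}}([0,1)^n)$ is what allows the stability statement to apply.

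Continuity of $K$ and its Lie derivatives in $(t_0,m_0)$ follows from a stability argument. If $(t_0^i,m_0^i)\to(t_0,m_0)$, then $(u^i,m^i)\to(u,m)$ by Proposition \ref{Prop_MFG system wellposed regularity}. The uniform bounds then give compactness of $(z^i,\rho^i)$ in weaker norms, and the limit solves the limiting linear system, so uniqueness identifies it. Interpolating against the uniform $C^{k+\alpha}$ and $C^{k-1+\alpha}$ bounds gives continuity of the derivatives.

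The main obstacle is the a priori estimate of the first step: closing the coupling through the energy identity in the degenerate setting. The pairing must be justified for distributional $\rho$, which requires approximating $\rho_0$ by smooth data and using the weak formulation \eqref{general FPK_weak formulation} with $z$ as test function. The bound for $\operatorname{div}_{\mathcal{X}}(mD_{\mathcal{X}}z)$ in the negative norm must also be controlled by the $L^1$ quantity above. For that I would first try the duality estimate of Lemma \ref{Lem_LHJB Lipschitz}, which only needs Lipschitz test functions, and then bootstrap to higher norms.
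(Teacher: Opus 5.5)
Your proposal follows the paper's proof essentially step for step: the uniform linear bound for the linearized system via Leray--Schauder and the energy identity from \eqref{property_F}--\eqref{property_G} (the paper's Lemma~\ref{Lem_GL system regularity} and Corollary~\ref{Lem_LMFG system regularity}), then $K(t_0,x,m_0,y):=z(t_0,x;\delta_y)$, then $y$-regularity via difference quotients along integral curves of $X_i$ giving $X_I^yK=z(t_0,x;X_I^*\delta_y)$ with H\"{o}lder control $\|\delta_y-\delta_{y'}\|_{C_{\mathcal{X}}^{-\alpha}}\lesssim d_{cc}(y,y')^{\alpha}$, the representation formula by linearity and density, and continuity in $(t_0,m_0)$ by stability. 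The caution in your last paragraph is harmless but unnecessary: no smoothing of $\rho_0$ is needed for the pairing, since $z$ is itself an admissible test function in \eqref{general FPK_weak formulation} at level $k-1$, and the source is bounded directly by $\|\operatorname{div}_{\mathcal{X}}(mPD_{\mathcal{X}}z)\|_{C_{\mathcal{X}}^{-(k-1+\alpha)}}\le\int_{\mathbb{T}_{\mathbb{G}}}|PD_{\mathcal{X}}z|\,dm$ together with Lemma~\ref{Lem_general FPK wellposed regularity}, with no bootstrap through Lemma~\ref{Lem_LHJB Lipschitz}.
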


We highlight three main contributions of this paper as follows. First, as far as we know, this paper represents the first study on the non-totally degenerate MFG master equation, namely by addressing the periodic MFG problem on Carnot groups. These groups are non-commutative groups with stratified Lie algebra structures (e.g., the Heisenberg group) and also typical examples of sub-Riemannian manifolds. From an applied perspective, the master equation studied in this paper is a kind of PDE defined on the space of probability measures. It is related to a second-order MFG on Carnot tori, where each player can move periodically only along admissible trajectories given by the vector fields generating the Carnot group. Unlike the MFG system, the master equation is a single equation that can describe the Nash equilibria of the associated MFG problem. Second, this paper generalizes the classical (non-degenerate) first-order master equation for MFGs proposed in \cite{19CDLL} to the degenerate case. We first establish the well-posedness and regularity of solutions to the corresponding degenerate MFG system \eqref{MFG system} and the linearized degenerate MFG system \eqref{linear MFG system}, and then proceed to prove the well-posedness and regularity of the solution to the degenerate master equation \eqref{ME}. As a byproduct, we rigorously illustrate the equivalent characterization between the master equation \eqref{ME} and the MFG system \eqref{MFG system}. Last, the degenerate nature of the PDEs considered in this paper brings challenges to our research. To overcome the non-commutativity of vector fields and the non-isotropy of space, we exploit the regularity results established in \cite{JWY} for solutions to the linear degenerate backward parabolic equation and the general-form FPK equation on Carnot tori, including Schauder estimates, H\"{o}lder continuity estimates and weak regularity estimates, to solve the degenerate MFG system \eqref{MFG system} and the linearized degenerate MFG system \eqref{linear MFG system} over the product space of the non-isotropic H\"{o}lder space and its dual space.

The rest of the paper is organized as follows. In Section \ref{Sec_2}, we provide some preliminaries on Carnot groups and Carnot tori, and then define the non-isotropic H\"{o}lder spaces and their dual spaces. In Section \ref{Sec_3}, we focus on proving the space-regularity and measure-Lipschitz continuity of the constructed function $U$ in \eqref{U_Def}, i.e. Proposition \ref{Prop_MFG system wellposed regularity}-\ref{Prop_Lip. ctn. of U}. In Section \ref{Sec_4}, first we prove Lemma \ref{Lem_relation of z and rho_0} which concerns the linearized MFG system \eqref{linear MFG system}. Next we prove the $C^1$ differentiability of $U$ with respect to the measure, as stated in Proposition \ref{Prop_U C^1 w.r.t. m}. Finally, in Section \ref{Sec_5}, we complete the proof of the well-posedness of the solution to the master equation \eqref{ME}, i.e. Theorem \ref{Thm_ME wellposed regularity}.

\section{Brief preliminaries on Carnot groups and Carnot Tori}\label{Sec_2}

This section is devoted to introducing the concepts of Carnot groups and Carnot Tori, as well as defining the non-isotropic H\"{o}lder spaces and their dual spaces. We start with a review of fundamental notations and preliminaries related to Carnot groups; for a more comprehensive overview of this topic, readers are referred to the monographs \cite{07BLU,22BB}.
\begin{Def}[Homogeneous group, see \cite{07BLU,22BB}]\label{Def_Homogeneous group}
Let $\mathbb{G}=\left(\mathbb{R}^n,\circ\right)$ be a Lie group on $\mathbb{R}^n$ with $\circ$ being a given Lie group law on $\mathbb{R}^n$, called ``translation''. We say that $\mathbb{G}$ is a homogeneous (Lie) group on $\mathbb{R}^n$ if there exists an $n$-tuple of real numbers $(\alpha_1,\ldots,\alpha_n)$, with $1=\alpha_1 \leq \alpha_2 \leq \ldots \leq \alpha_n$, such that the ``dilation'' $D_{\lambda}:\mathbb{R}^n \to \mathbb{R}^n$, defined as
$$
D_{\lambda}(x):=\left(\lambda^{\alpha_1} x_1,\ldots,\lambda^{\alpha_n} x_n\right)
$$
is an automorphism of the group $\mathbb{G}$ for every $\lambda>0$.

We denote by $\mathbb{G}=(\mathbb{R}^n,\circ,D_{\lambda})$ the datum of a homogeneous group on $\mathbb{R}^n$ with group law $\circ$ and dilation group $\{D_{\lambda}\}_{\lambda>0}$. Moreover, the number
$$
Q:=\sum_{i=1}^{n}\alpha_i
$$
is called the homogeneous dimension of the homogeneous group. 
\end{Def}
A differential operator $P$ on $\mathbb{G}$ is said to be \textit{left-invariant} if
$$
P^x \left(f(y \circ x)\right)=\left(P f\right)(y \circ x)
$$
for every test function $f$ and $x,y \in \mathbb{R}^n$. Similarly, a differential operator $P$ on $\mathbb{G}$ is said to be \textit{right-invariant} if
$$
P^y \left(f(y \circ x)\right)=\left(P f\right)(y \circ x)
$$
for every test function $f$ and $x,y \in \mathbb{R}^n$.

For $\delta\in\mathbb{R}$, $P$ is said to be \textit{$D_{\lambda}$-homogeneous of degree $\delta$} (or simply ``\textit{$\delta$-homogeneous}'') if
$$
P^x\left(f(D_{\lambda}(x))\right)=\lambda^{\delta} \left(P f\right) \left(D_{\lambda}(x)\right)
$$
for every test function $f$, $\lambda>0$ and $x \in \mathbb{R}^n\setminus\{0\}$. A real function $f$ defined on $\mathbb{R}^n$ is called \textit{$D_{\lambda}$-homogeneous of degree $\delta$} (or simply ``\textit{$\delta$-homogeneous}'') if $f\not\equiv 0$ and $f$ satisfies
$$
f(D_{\lambda}(x))=\lambda^{\delta}f(x)
$$
for any $\lambda>0$ and $x \in \mathbb{R}^n$.

The Haar measure, denoted by $dx$, of the homogeneous group $\mathbb{G}=(\mathbb{R}^n,\circ, D_{\lambda})$ is known to be left-right-invariant and to coincide with the Lebesgue measure on $\mathbb{R}^n$.
\begin{Lem}[see {\cite[Theorem 3.29, Remark 3.30]{22BB}}]\label{Lem_LI&RI vector field}
Let $X_i$ be the left-invariant vector field which coincides with $\partial_{x_i}$ at the origin, i.e. $X_{i}(0)=\partial_{x_{i}}|_{0}$, for $i=1,\ldots,n$. Then $X_i$ is $\alpha_i$-homogeneous and has the following structure:
\begin{equation*}
X_i=\partial_{x_i}+\sum_{i<k}q_{ik}(x)\partial_{x_k},\quad i=1,\ldots,n
\end{equation*}
where $q_{ik}(\cdot)$ is a $(\alpha_k-\alpha_i)$-homogeneous polynomial. In particular, $q_{ik}$ can only depend on the variable $x_1,\ldots,x_{k-1}$. Moreover, the transpose of the vector field is just its opposite:
\begin{equation*}
X_i^*=-X_i.
\end{equation*}

Analogous properties hold for the right-invariant vector field $X_i^R$ which coincides with $\partial_{x_i}$ at the origin.
\end{Lem}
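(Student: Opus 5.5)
The plan is to read off the structure of $X_i$ directly from the group law, so the main work is to establish the triangular, polynomial form of $\circ$ forced by the dilations. Since $X_i$ is left-invariant and $X_i(0)=\partial_{x_i}|_0$, it is given by
$$
X_i f(x)=\frac{d}{dt}f\bigl(x\circ(te_i)\bigr)\Big|_{t=0}=\sum_{k=1}^n \frac{\partial (x\circ y)_k}{\partial y_i}\Big|_{y=0}\,\partial_{x_k}f(x).
$$
Left-invariance is immediate from associativity, and evaluating at $x=0$ (where $0\circ y=y$) gives $\partial_{x_i}$. So the coefficients of $X_i$ are $c_{ik}(x)=\partial_{y_i}(x\circ y)_k|_{y=0}$, and everything reduces to the structure of $(x\circ y)_k$.

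\textbf{Step 1 (polynomial, triangular group law).} Because $D_\lambda$ is an automorphism, $(D_\lambda x\circ D_\lambda y)_k=\lambda^{\alpha_k}(x\circ y)_k$. So each component $(x,y)\mapsto(x\circ y)_k$ is a smooth function on $\mathbb{R}^{2n}$ that is $D_\lambda$-homogeneous of degree $\alpha_k$, with weights $\alpha_j$ on both $x_j$ and $y_j$.
\begin{enumerate}
\item Taylor-expand at the origin and let $\lambda\to0$. Any smooth function homogeneous of degree $a$ for such a dilation must be a polynomial containing only monomials $x^\beta y^\gamma$ with $\sum_j\alpha_j(\beta_j+\gamma_j)=a$. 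Indeed, the Taylor remainder of high enough order is $o(\lambda^{a})$ and hence vanishes.
\item The identities $x\circ 0=x$ and $0\circ y=y$ then give $(x\circ y)_k=x_k+y_k+Q_k(x,y)$, where every monomial of $Q_k$ contains at least one $x$-variable and at least one $y$-variable.
\item A monomial of weight $\alpha_k$ that contains some $x_j$ or $y_j$ with $\alpha_j\ge\alpha_k$ must be that single variable, since all weights are at least $1$. Such a monomial is excluded by the mixed condition in the previous item. Hence $Q_k$ depends only on variables with $\alpha_j<\alpha_k$.
\end{enumerate}
This is the step I expect to be the main obstacle: making the ``homogeneous smooth $\Rightarrow$ polynomial'' argument clean. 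I would cite \cite{07BLU,22BB} for it, or give the short Taylor-remainder argument sketched in the first item.

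\textbf{Step 2 (coefficients).} Set $q_{ik}(x):=\partial_{y_i}Q_k(x,0)$, so that $c_{ik}=\delta_{ik}+q_{ik}$.
\begin{enumerate}
\item Differentiating the homogeneity of $Q_k$ in $y_i$ and putting $y=0$ shows that $q_{ik}$ is a homogeneous polynomial of degree $\alpha_k-\alpha_i$.
\item If $k\le i$, then either $\alpha_k<\alpha_i$, so $q_{ik}\equiv0$ because there are no polynomials of negative degree. Or $\alpha_k\le\alpha_i$ with $Q_k$ independent of $y_i$, since $\alpha_i\ge\alpha_k$; then again $q_{ik}\equiv0$. So only $k>i$ appears, which gives the displayed form of $X_i$.
\item By Step 1, $q_{ik}$ depends only on the $x_j$ with $\alpha_j<\alpha_k$, and because the weights are nondecreasing this forces $j<k$.
\item $\alpha_i$-homogeneity of $X_i$ follows from the degree count: each term $q_{ik}\partial_{x_k}$ scales like $\lambda^{(\alpha_k-\alpha_i)-\alpha_k}$, i.e.\ with the factor $\lambda^{-\alpha_i}$ in the sense of the definition. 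Equivalently, it follows from $D_\lambda$ being an automorphism applied to the curve $t\mapsto x\circ te_i$.
\end{enumerate}

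\textbf{Step 3 (transpose).} For test functions, integration by parts gives
$$
X_i^*f=-\sum_k\partial_{x_k}(c_{ik}f)=-X_if-\Bigl(\sum_k\partial_{x_k}c_{ik}\Bigr)f.
$$
Here $\partial_{x_i}c_{ii}=0$, and $\partial_{x_k}q_{ik}=0$ because $q_{ik}$ does not depend on $x_k$. Hence $X_i^*=-X_i$.

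For the right-invariant fields, I would repeat Steps 1--3 with $X_i^R f(x)=\frac{d}{dt}f\bigl((te_i)\circ x\bigr)\big|_{t=0}$, using the same triangular form of $Q_k$ with the roles of $x$ and $y$ swapped.
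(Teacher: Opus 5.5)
Your proof is correct and follows the standard route of the source the paper cites for this lemma (\cite{22BB}); the paper itself gives no proof. In that route, the dilations force $(x\circ y)_k=x_k+y_k+Q_k(x,y)$, where $Q_k$ is a weighted-homogeneous polynomial in the variables of weight $<\alpha_k$. The coefficients of $X_i$ are then $\partial_{y_i}(x\circ y)_k|_{y=0}$, and the resulting triangular, divergence-free structure gives $X_i^*=-X_i$.

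One cosmetic slip: under the paper's definition of $\delta$-homogeneity, your degree count in Step 2 should produce the factor $\lambda^{\alpha_i}$, not $\lambda^{-\alpha_i}$, since $\lambda^{\alpha_i}c_{ik}(D_\lambda x)=\lambda^{\alpha_k}c_{ik}(x)$. Your alternative argument via $D_\lambda(x\circ te_i)=D_\lambda x\circ(\lambda^{\alpha_i}te_i)$ gives this directly.
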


In addition, we turn to introduce the H\"{o}rmander's condition.
\begin{Def}[H\"{o}rmander's condition, see \cite{67Ho}]
Let $\Omega$ be an open subset of $\mathbb{R}^n$, and let $Y_{1},Y_{2},\ldots, Y_{m}$ be real smooth vector fields defined on $\Omega$. We say $Y_{1},Y_{2},\ldots, Y_{m}$ satisfy the H\"{o}rmander's condition on $\Omega$ if there exists a smallest integer $r\geq 1$ such that $Y_{1},Y_{2},\ldots, Y_{m}$ together with their commutators of length at most $r$ span the tangent space $T_{x}(\Omega)$ at each point $x\in\Omega$. The integer $r$ is called the H\"{o}rmander's index of $\Omega$.
\end{Def}
We can define the Carnot-Carath\'{e}odory distance (also referred to as the subunit metric or control distance) associated with the family of H\"{o}rmander's vector fields $\mathcal{Y}=(Y_1,Y_2,\ldots,Y_m)$. For further details regarding its definition, we refer the reader to \cite[Definition 1.29]{22BB}.
\begin{Def}[Carnot-Carath\'{e}odory distance]\label{Def_CC distance}
For any points $x,y\in \Omega$ and $\delta>0$, let $C_{x,y}(\delta)$ be the collection of absolutely continuous mapping $\varphi:[0,\delta]\to \Omega$, which satisfies $\varphi(0)=x,\varphi(\delta)=y$ and
\[ \varphi'(t)=\sum_{i=1}^{m}a_{i}(t)Y_{i}(\varphi(t)),~~\sum_{i=1}^{m}{a_{i}(t)}^2\leq 1,~~a.e.~~ t\in [0,\delta]. \]
The Carnot-Carath\'{e}odory distance $d_{cc}(x,y)$ is defined as
\begin{equation*}\label{2-14}
d_{cc}(x,y):=\inf\{\delta>0~|~ \exists~\varphi\in C_{x,y}(\delta)\}.
\end{equation*}
\end{Def}
The well-definedness of the Carnot-Carath\'{e}odory distance is ensured by the Chow-Rashevskii theorem and the H\"{o}rmander's condition (see \cite[Theorem 57]{14Br}). Moreover, the local topological equivalence between this distance and the Euclidean distance is a known result: for any compact set $K \subset \Omega$, there exists a constant $C>0$ such that, for any $x,y \in K$, we have
\begin{equation}\label{d_cc VS Euclidean}
C^{-1} |x-y| \leq d_{cc}(x, y) \leq C |x-y|^{\frac{1}{r}},
\end{equation}
where $|\cdot|$ denotes the Euclidean norm on $\mathbb{R}^n$ and $r$ is the H\"{o}rmander's index of $\Omega$.

We are now ready to state the definitions of the Carnot group and Carnot torus, which are given as follows:
\begin{Def}[Homogeneous Carnot group, see \cite{07BLU,22BB}]\label{Def_Carnot group}
The Lie group $\mathbb{G}=(\mathbb{R}^{n},\circ)$ is called the (homogeneous) Carnot group (or a stratified Lie group) if the following two conditions are fulfilled:
\begin{enumerate}[label=(\arabic*), leftmargin=*, itemindent=1.7em]
\item the decomposition $\mathbb{R}^{n}=\mathbb{R}^{n_{1}}\times \mathbb{R}^{n_{2}}\times \cdots\times \mathbb{R}^{n_{r}}$ holds for some integers $n_{1},\ldots, n_{r}$ such that      $n_{1}+n_{2}+\cdots+n_{r}=n$, and for each $\lambda>0$ there exists a dilation
\begin{equation*}
D_{\lambda}(x)=D_{\lambda}\left(x^{(1)},x^{(2)},\ldots,x^{(r)}\right)=\left(\lambda x^{(1)},\lambda^{2}x^{(2)},\ldots,\lambda^{r}x^{(r)}\right),
\end{equation*}
which is an automorphism of the group $\mathbb{G}$. Here $x^{(i)}\in \mathbb{R}^{n_{i}}$ for $i=1,2,\ldots,r$;\label{Carnot group C1}
\item let $X_{1},\ldots,X_{n_{1}}$ be the left invariant vector fields on $\mathbb{G}$ such that $X_{k}(0)=\partial_{x_{k}}|_{0}$ for $k=1,\ldots,n_{1}$. Then
\begin{equation*}
\operatorname{rank}(\operatorname {Lie}\{X_1,\ldots,X_{n_1}\}(x))=n
\end{equation*}
for every $x\in\mathbb{R}^n$, i.e., $X_1,\ldots,X_{n_1}$ satisfy the H\"{o}rmander's condition of step $r$ on $\mathbb{R}^{n}$.\label{Carnot group C2}
\end{enumerate}

If \ref{Carnot group C1} and \ref{Carnot group C2} are satisfied, we shall say that the triple $\mathbb{G}=(\mathbb{R}^{n},\circ, D_{\lambda})$ is a (homogeneous) Carnot group with homogeneous dimension 
\begin{equation*}
Q=\sum_{j=1}^{r}j n_{j}.
\end{equation*}
We also say that $\mathbb{G}$ has step $r$ and $n_1$ generators. The vector fields $X_1,\ldots,X_{n_{1}}$ are called the Jacobian generators of $\mathbb{G}$.  
\end{Def}
\begin{Def}[Carnot torus]
The torus in the Carnot group $\left(\mathbb{G},\circ\right)$, denoted by $\mathbb{T}_{\mathbb{G}}$, is defined as the quotient space $\mathbb{G}/\mathbb{Z}^{n}$, which is determined by the following equivalence relation:
$$
x \sim y \text{ if there exists } k \in \mathbb{Z}^{n} \text{ such that } k \circ x=y.
$$
\end{Def}
Analogous to the Euclidean torus, functions on $\mathbb{T}_{\mathbb{G}}$ are functions $f$ on $\mathbb{G}$ that satisfy $f(k \circ x)=f(x)$ for all $x \in \mathbb{G}$ and $k \in \mathbb{Z}^{n}$, and are thus called $1_{\mathbb{G}}$-periodic functions. Further, the following lemma indicates that the torus $\mathbb{T}_{\mathbb{G}}$ can be considered as the cube $[0,1)^n$, the proof of which is provided in \cite[Lemma 2.2]{JWY}.
\begin{Lem}
For every point $x\in\mathbb{G}$, there exists a unique point $x_0\in[0,1)^n$ and a finite number of group actions generated by elements of the form $(k,0)\in\mathbb{Z}^{n_1}\times\{0\}^{n-n_1}$ such that applying these actions to $x_0$ yields $x$.
\end{Lem}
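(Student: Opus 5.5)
The plan is to exploit the triangular (graded) structure of the group law in exponential coordinates, and to proceed coordinate by coordinate, layer by layer, using Lemma \ref{Lem_LI&RI vector field} and the stratification in Definition \ref{Def_Carnot group}.

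\textbf{Structure of the group law.} By Lemma \ref{Lem_LI&RI vector field} (equivalently, by the Baker--Campbell--Hausdorff formula), the group law has the form
\begin{equation*}
(y\circ x)_j=y_j+x_j+Q_j(y,x),\qquad j=1,\ldots,n.
\end{equation*}
Here $Q_j$ is a $\alpha_j$-homogeneous polynomial. It depends only on coordinates $y_i,x_i$ with $\alpha_i<\alpha_j$, and every monomial of $Q_j$ contains at least one $y_i$ and at least one $x_l$ of strictly lower degree. Two consequences follow.
\begin{itemize}
\item For $y$ horizontal, left translation by $y$ changes the first layer exactly by $+y^{(1)}$.
\item An element whose coordinates vanish in layers $1,\ldots,s-1$ acts on layer $s$ by pure addition. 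It leaves layers $1,\ldots,s-1$ unchanged and only modifies layers $\geq s+1$.
\end{itemize}

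\textbf{Generating the lattice.} Let $\mathcal{H}$ be the subgroup generated by $\{(k,0):k\in\mathbb{Z}^{n_1}\}$. I would show by induction on $s$ that $\mathcal{H}$ contains, for each unit vector $e_j$ of layer $s$, an element $g$ with the following properties:
\begin{itemize}
\item $g$ vanishes in layers $<s$;
\item $g$ equals $e_j$ in layer $s$;
\item $g$ has integer entries in higher layers.
\end{itemize}
For $s=1$ these elements are the generators themselves. For $s\geq 2$, I would take iterated group commutators $y\circ z\circ y^{-1}\circ z^{-1}$ of such elements. By the formula above, their layer-$s$ component is the bracket term, which is bilinear in the lower-layer data. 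Stratification gives $\mathfrak g_s=[\mathfrak g_1,\mathfrak g_{s-1}]$, so these bracket terms span layer $s$. Integrality of the resulting layer-$s$ vectors, together with the fact that $\mathbb{Z}^n$ is a subgroup (implicit in the definition of $\mathbb{T}_{\mathbb{G}}$), is what lets us reach each $e_j$ by integer combinations.

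\emph{This is the main obstacle.} It requires integral structure constants in the chosen coordinates, i.e.\ the setting in which $\mathbb{Z}^n$ is a lattice subgroup. I would first try to verify it directly from the homogeneity and integrality of $Q_j$ as in Lemma \ref{Lem_LI&RI vector field}. If that fails, I would restrict to showing that $\mathcal{H}$ acts transitively on each $\mathbb{Z}$-coset of the relevant coordinate, which is all the existence step below uses.

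\textbf{Existence.} Given $x$, I construct $h\in\mathcal{H}$ with $h^{-1}\circ x\in[0,1)^n$, proceeding layer by layer.
\begin{enumerate}
\item First left-translate by the horizontal element $(-\lfloor x^{(1)}\rfloor,0)$. This puts layer $1$ into $[0,1)^{n_1}$.
\item Assume layers $1,\ldots,s-1$ already lie in $[0,1)$. Apply, for each coordinate $j$ of layer $s$, the appropriate integer power of the element $g$ from the previous step. Such elements act on layer $s$ by pure integer addition, do not disturb lower layers, and only alter higher layers. This brings layer $s$ into $[0,1)^{n_s}$.
\end{enumerate}
After $r$ steps we obtain $x_0=h^{-1}\circ x\in[0,1)^n$. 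Here $h$ is a finite product of horizontal integer generators, and $x=h\circ x_0$, which is exactly the required representation.

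\textbf{Uniqueness.} Suppose $h\circ x_0=h'\circ x_0'$ with $x_0,x_0'\in[0,1)^n$. Then $x_0'=g\circ x_0$ with $g=h'^{-1}\circ h\in\mathbb{Z}^n$. I prove $g=0$ by induction on the coordinates ordered by degree.
\begin{itemize}
\item In the first layer, $x_{0,j}'=g_j+x_{0,j}$ with $g_j\in\mathbb{Z}$ and both $x_{0,j},x'_{0,j}\in[0,1)$. Hence $g_j=0$.
\item If all coordinates of $g$ of degree $<\alpha_j$ vanish, then $Q_j(g,x_0)=0$, since every monomial contains such a coordinate of $g$. So again $x'_{0,j}=g_j+x_{0,j}$, which forces $g_j=0$.
\end{itemize}
Hence $g=0$ and $x_0'=x_0$. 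The point $x_0$ is therefore unique, although the word in the generators representing $h$ need not be.
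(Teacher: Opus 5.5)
\textbf{The gap is the lattice-generation step you flag as the main obstacle, and it cannot be closed from the available hypotheses.} You claim that integrality, i.e.\ $\mathbb{Z}^n$ being a subgroup, lets you reach each $e_j$ by integer combinations. It does not. Integrality only tells you that, for each layer $s$, the layer-$s$ components of elements of $\mathcal{H}$ vanishing in layers $<s$ form a subgroup $\Lambda_s\subseteq\mathbb{Z}^{n_s}$. That the brackets span $\mathfrak{g}_s$ over $\mathbb{R}$ says nothing about whether $\Lambda_s=\mathbb{Z}^{n_s}$.

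Here is a counterexample. Take the Heisenberg group with law $(x,y,t)\circ(x',y',t')=(x+x',y+y',t+t'+2(x'y-xy'))$, so $X_1=\partial_x+2y\partial_t$ and $X_2=\partial_y-2x\partial_t$.
\begin{itemize}
\item This is a homogeneous Carnot group in the sense of Definition \ref{Def_Carnot group}, and $\mathbb{Z}^3$ is a subgroup, so $\mathbb{T}_{\mathbb{G}}$ is well defined.
\item The set $\{(p,q,s)\in\mathbb{Z}^3: s\in 2\mathbb{Z}\}$ is a subgroup containing every $(k,0)$, hence it contains $\mathcal{H}$. In fact it equals $\mathcal{H}$, since $(1,1,0)^{-1}\circ(1,0,0)\circ(0,1,0)=(0,0,-2)$. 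The commutator of $(1,0,0)$ and $(0,1,0)$ alone gives only $(0,0,-4)$. So $\Lambda_2=2\mathbb{Z}$.
\item Suppose $(0,0,1)=h\circ x_0$ with $h=(p,q,s)\in\mathcal{H}$ and $x_0=(a,b,c)\in[0,1)^3$. The first two coordinates force $p=q=a=b=0$. Then $s+c=1$ forces $s=1$, which is odd, a contradiction.
\end{itemize}
So your reduction can only push the last coordinate into an interval of length $2$. Your fallback, transitivity of $\mathcal{H}$ on each $\mathbb{Z}$-coset of a coordinate, fails in the same example.

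Consequently, the existence half needs an extra normalization that must be \emph{assumed}, not derived: that $\mathcal{H}=\mathbb{Z}^n$. Given $\mathcal{H}\subseteq\mathbb{Z}^n$, this is equivalent to $\Lambda_s=\mathbb{Z}^{n_s}$ for every $s$, which is exactly what your reduction step uses. It holds, for example, for the law $t+t'+xy'$, where the commutator of $(1,0,0)$ and $(0,1,0)$ is $(0,0,1)$.

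Under that hypothesis the rest of your proposal is correct. The triangular form with $Q_j(x,0)=Q_j(0,y)=0$, together with weighted homogeneity, gives the pure-addition property used in the layer-by-layer reduction. Your uniqueness argument uses only $h'^{-1}\circ h\in\mathbb{Z}^n$ and is sound as written.

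Without such a normalization, the statement with $[0,1)^n$ as fundamental domain is false, as the example shows, so no argument along these lines can establish it. What your method does prove in general is the analogous statement with $[0,1)^n$ replaced by a fundamental domain adapted to $\mathcal{H}$, for instance $[0,1)^2\times[0,2)$ in the example above.
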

Additional details concerning periodicity on Carnot groups can be found in \cite{07St,21MMT,24CMM}. Notably, the Carnot torus does not coincide with the Euclidean torus. For example, $\mathbb{T}_{\mathbb{G}}$ is not obtained identifying the points of two opposite faces of $[0,1]^n$ with the same two coordinates. Moreover, $\mathbb{T}_{\mathbb{G}}$ is a bounded compact space, naturally endowed with the distance induced by any distance $d$ in $\mathbb{G}$ as
$$
d^{\mathbb{T}_{\mathbb{G}}}(x,y):=\inf_{\substack{x', y' \in \mathbb{G} \\ x' \sim x, y' \sim y}}d\left(x',y'\right), \text{ any } x,y \in \mathbb{T}_{\mathbb{G}}.
$$

\begin{Rem}\label{Rem_Holder norm R^n=T_mathbb G}
Since a $1_{\mathbb{G}}$-periodic function $f$ on $\mathbb{G}=\mathbb{R}^{n}$ can be regarded as a function on $\mathbb{T}_{\mathbb{G}}$ (still denoted as $f$), we note that the H\"{o}lder norm induced by $d$ on $\mathbb{G}$ is equal to the one induced by $d^{\mathbb{T}_{\mathbb{G}}}$ on $\mathbb{T}_{\mathbb{G}}$, namely $\left\|f\right\|_{C_{d}^{\alpha}\left(\mathbb{R}^{n}\right)} =\left\|f\right\|_{C_{d^{\mathbb{T}_{\mathbb{G}}}}^{\alpha}\left(\mathbb{T}_{\mathbb{G}}\right)},\alpha\in(0,1]$. This is because, on the one hand, for any $x,y \in \mathbb{G}$, $x \neq y$,
\begin{equation*}
\frac{|f(x)-f(y)|}{d(x,y)^{\alpha}} \leq\frac{|f(x)-f(y)|}{d^{\mathbb{T}_{\mathbb{G}}}(x,y)^{\alpha}}.
\end{equation*}
Hence $\left[f\right]_{C_{d}^{\alpha}\left(\mathbb{R}^{n}\right)} \leq\left[f\right]_{C_{d^{\mathbb{T}_{\mathbb{G}}}}^{\alpha}\left(\mathbb{T}_{\mathbb{G}}\right)}$. On the other hand, for any $x,y \in \mathbb{T}_{\mathbb{G}}$, $x \neq y$,
\begin{equation*}
\frac{|f(x)-f(y)|}{d^{\mathbb{T}_{\mathbb{G}}}(x,y)^{\alpha}} =\inf_{\substack{x', y' \in \mathbb{G} \\ x' \sim x, y' \sim y}}\frac{|f(x')-f(y')|}{d(x',y')^{\alpha}}.
\end{equation*}
Hence $\left[f\right]_{C_{d^{\mathbb{T}_{\mathbb{G}}}}^{\alpha}\left(\mathbb{T}_{\mathbb{G}}\right)} \leq\left[f\right]_{C_{d}^{\alpha}\left(\mathbb{R}^{n}\right)}$. Since it's easy to find that $\left\|f\right\|_{C\left(\mathbb{R}^{n}\right)} =\left\|f\right\|_{C\left(\mathbb{T}_{\mathbb{G}}\right)}$, finally we obtain $\left\|f\right\|_{C_{d}^{\alpha}\left(\mathbb{R}^{n}\right)} =\left\|f\right\|_{C_{d^{\mathbb{T}_{\mathbb{G}}}}^{\alpha}\left(\mathbb{T}_{\mathbb{G}}\right)}$.
\end{Rem}

Finally, we present an important class of non-isotropic H\"{o}lder spaces associated with the family of vector fields $\mathcal{X}=\{X_1,\ldots,X_{n_1}\}$ (see \cite{07BB,10BBLU}).

Let $\Omega\subseteq\mathbb{R}^n$ be any open subset and $X_I:=X_{i_1} \cdots X_{i_{|I|}}$, where $I$ is any multi-index $I=\left(i_1, \ldots, i_{|I|}\right)$ with $i_j \in \{1,\cdots,n_1\}, j=1,\cdots,|I|$. For any $k \in \mathbb{N}$, we define the non-isotropic space
\begin{equation*}
C_{\mathcal{X}}^{k}\left(\Omega\right):=\left\{\phi \in C\left(\Omega\right)~|~X_I \phi \in C\left(\Omega\right),\,\forall\,|I| \leq k\right\}.
\end{equation*}
For any $k \in \mathbb{N}$ and $\alpha \in(0,1]$ we define the non-isotropic H\"{o}lder spaces
\begin{equation*}
C_{\mathcal{X}}^{\alpha}\left(\Omega\right):=\left\{\phi \in L^{\infty}\left(\Omega\right)~\bigg|~ \sup _{\substack{x, y \in \Omega \\
x \neq y}} \frac{|\phi(x)-\phi(y)|}{d_{cc}(x, y)^\alpha}<+\infty\right\},
\end{equation*}
\begin{equation}\label{non-isotropic Holder spaces_Def}
C_{\mathcal{X}}^{k+\alpha}\left(\Omega\right):=\left\{\phi \in L^{\infty}\left(\Omega\right)~|~X_I \phi \in C_{\mathcal{X}}^{\alpha}\left(\Omega\right),\,\forall\,|I| \leq k\right\}.
\end{equation}
For any function $\phi \in C_{\mathcal{X}}^{ \alpha}\left(\Omega\right)$, the H\"{o}lder seminorm can be defined as
$$
[\phi]_{C_{\mathcal{X}}^{ \alpha}\left(\Omega\right)}:=\sup_{\substack{x, y \in \Omega \\ x \neq y}} \frac{|\phi(x)-\phi(y)|}{d_{cc}(x, y)^\alpha}.
$$
Furthermore, for any $\phi \in C_{\mathcal{X}}^{k+ \alpha}\left(\Omega\right)$, the H\"{o}lder norm is defined as
\begin{equation}\label{Holder norm}
\|\phi\|_{C_{\mathcal{X}}^{k+\alpha}\left(\Omega\right)}:=\|\phi\|_{C_{\mathcal{X}}^k\left(\Omega\right)}+\sum_{0 \leq|I| \leq k}\left[X_I \phi\right]_{C_{\mathcal{X}}^{ \alpha}\left(\Omega\right)},
\end{equation}
where $\|\phi\|_{C_{\mathcal{X}}^k\left(\Omega\right)}:=\sum\limits_{0 \leq |I|\leq k}\|X_I\phi\|_{L^{\infty}\left(\Omega\right)}$.

Endowed with the above norm, $C_{\mathcal{X}}^{k+ \alpha}\left(\Omega\right)$ is a Banach space and it follows from \eqref{d_cc VS Euclidean} that, for any compact set $K \subset \Omega$,
$$
C^{-1}\|\phi\|_{C^{ \frac{\alpha}{k}}\left(K\right)} \leq\|\phi\|_{C_{\mathcal{X}}^{\alpha}\left(K\right)} \leq C\|\phi\|_{C^{ \alpha}\left(K\right)},
$$
where $\|\phi\|_{C^{\alpha}\left(K\right)}$ is the standard H\"{o}lder norm, and $C>0$ is a constant depending only on the dimension $n$ and the family of vector fields $\mathcal{X}$.


For any $T>0$, $l,k \in \mathbb{N}$ and $\alpha,\beta \in(0,1]$, we define the parabolic non-isotropic H\"{o}lder spaces on $[0,T] \times \Omega$ as
\begin{align*}
& C_{\mathcal{X}}^{\beta,\alpha}\left([0,T]\times \Omega \right)\\
:=&\left\{\phi \in L^{\infty}\left([0,T] \times \Omega\right)~\bigg|~\sup _{\substack{(t,x), (s,y) \in [0,T]\times \Omega \\
(t,x) \neq (s,y)}} \frac{|\phi(t,x)-\phi(s,y)|}{|t-s|^\beta+d_{cc}(x,y)^\alpha}<+\infty\right\},
\end{align*}
\begin{align}\label{time-space-Holder space_Def}
& C_{\mathcal{X}}^{l+\beta,k+\alpha}\left([0,T] \times \Omega\right)\\
:=&\left\{\phi \in L^{\infty}\left([0,T]\times \Omega\right)~\big|~\partial_t^i X_I \phi \in C_{\mathcal{X}}^{\beta,\alpha}\left([0,T]\times \Omega \right),\,\forall\, i,|I|\in\mathbb{N},i\leq l,|I|\leq k\right\} \notag
\end{align}
with the seminorm
$$
[\phi]_{C_{\mathcal{X}}^{\beta,\alpha}\left([0,T]\times \Omega\right)}:=\sup_{\substack{(t,x), (s,y) \in [0,T]\times \Omega \\
(t,x) \neq (s,y)}} \frac{|\phi(t,x)-\phi(s,y)|}{|t-s|^\beta+d_{cc}(x,y)^\alpha},
$$
and the norm
\begin{equation}\label{para. Holder norm}
\|\phi\|_{C_{\mathcal{X}}^{l+\beta,k+\alpha}\left([0,T]\times \Omega\right)}:=\sum_{i\leq l,|I|\leq k}\left(\\|\partial_t^i X_I\phi\|_{L^{\infty}\left([0,T]\times \Omega\right)} +\left[\partial_t^i X_I \phi\right]_{C_{\mathcal{X}}^{\beta,\alpha}\left([0,T]\times \Omega\right)}\right).
\end{equation}
We also define the spaces
\begin{align}\label{P-Holder space_Def}
& C_{\mathcal{X}}^{\frac{k+\alpha}{2},k+\alpha}\left([0,T] \times \Omega\right) \\
:= &\left\{\phi \in L^{\infty}\left([0,T]\times \Omega\right)~\big|~\partial_t^i X_I \phi \in C_{\mathcal{X}}^{\frac{\alpha}{2},\alpha}\left([0,T]\times \Omega \right),\,\forall\, i,|I|\in\mathbb{N},\,2i+|I|\leq k\right\}\notag
\end{align}
and
\begin{equation*}
B\left([0,T];C_{\mathcal{X}}^{k+\alpha}\left(\Omega\right)\right):=\left\{\phi:[0,T]\to C_{\mathcal{X}}^{k+\alpha}\left(\Omega\right)~\bigg|~ \sup_{t\in[0,T]}\|\phi(t,\cdot)\|_{C_{\mathcal{X}}^{k+\alpha}\left(\Omega\right)}<+\infty\right\}.
\end{equation*}


In addition, let us introduce the spaces of functions depending on two space variables as follows. For any $k_1,k_2 \in \mathbb{N}$ and $\alpha_1,\alpha_2 \in(0,1]$, we define
\begin{equation*}
C_{\mathcal{X},(x,y)}^{k_1,k_2}\left(\Omega \times \Omega\right) :=\left\{\phi:\Omega\times\Omega\to\mathbb{R} ~|~ X_{J}^x X_{J'}^y \phi(x,y) \in C\left(\Omega\times\Omega\right),\,\forall\,|I| \leq k_1,\left|J'\right| \leq k_2\right\},
\end{equation*}
and
\begin{equation*}
C_{\mathcal{X},x}^{k_1+\alpha_1}\left(\Omega;C_{\mathcal{X},y}^{k_2+\alpha_2}\left(\Omega\right)\right) :=\left\{\phi:\Omega\to C_{\mathcal{X}}^{k_2+\alpha_2}\left(\Omega\right)~\big|~\|\phi(x,\cdot)\|_{C_{\mathcal{X}}^{k_2+\alpha_2}\left(\Omega\right)} \in C_{\mathcal{X}}^{k_1+\alpha_1}\left(\Omega\right)\right\}
\end{equation*}
with the norm
\begin{align}\label{Holder norm_two space variables}
\|\phi\|_{C_{\mathcal{X},x}^{k_1+\alpha_1}\left(\Omega;C_{\mathcal{X},y}^{k_2+\alpha_2}\left(\Omega\right)\right)} := & \|\phi\|_{C_{\mathcal{X},(x,y)}^{k_1,k_2}\left(\Omega \times \Omega\right)} +\sum_{|I|\leq k_1}\bigg(\sup_{x\in\Omega}\left\|X_{J}^x\phi(x,\cdot)\right\|_{C_{\mathcal{X}}^{k_2+\alpha_2}\left(\Omega\right)} \\
& +\sup_{\substack{x_1,x_2 \in \Omega \\
x_1 \neq x_2}}\frac{\left\|X_{J}^x\phi(x,\cdot)|_{x=x_1}-X_{J}^x\phi(x,\cdot)|_{x=x_2}\right\|_{C_{\mathcal{X}}^{k_2+\alpha_2}\left(\Omega\right)}}{d_{cc}\left(x_1,x_2\right)^{\alpha_1}}\bigg),\notag
\end{align}
where
$$
\|\phi\|_{C_{\mathcal{X},(x,y)}^{k_1,k_2}\left(\Omega \times \Omega\right)}:=\sum_{|I| \leq k_1,\left|J'\right| \leq k_2}\left\|X_{J}^x X_{J'}^y \phi\right\|_{L^{\infty}\left(\Omega\times\Omega\right)}.
$$
Here, $X_i^x \phi(x,y)$ and $X_i^y \phi(x,y)$ respectively mean that the differential operator $X_i$ acts on $x$ and $y$, i.e. $X_i^x \phi(x,y)=X_i \phi(\cdot,y)|_{x}$ and $X_i^y \phi(x,y)=X_i \phi(x,\cdot)|_{y}$.

The dual space of $C_{\mathcal{X}}^{k+\alpha}\left(\Omega\right)$ is denoted by $C_{\mathcal{X}}^{-(k+\alpha)}\left(\Omega\right)$ with norm
\begin{equation*}\label{dual Holder space norm_Def}
\|\psi\|_{C_{\mathcal{X}}^{-(k+\alpha)}\left(\Omega\right)}:=\sup_{\|\phi\|_{C_{\mathcal{X}}^{k+\alpha}\left(\Omega\right)} \leq 1}\left\langle\psi, \phi\right\rangle_{C_{\mathcal{X}}^{-(k+\alpha)}\left(\Omega\right), C_{\mathcal{X}}^{k+\alpha}\left(\Omega\right)}\text{ for any }\psi \in C_{\mathcal{X}}^{-(k+\alpha)}\left(\Omega\right).
\end{equation*}
In the following derivation, we denote $\left\langle\psi, \phi\right\rangle_{C_{\mathcal{X}}^{-(k+\alpha)}\left(\Omega\right), C_{\mathcal{X}}^{k+\alpha}\left(\Omega\right)}$ simply as $\left\langle\psi, \phi\right\rangle$. 
\begin{Rem}
Following the same way as above, we can also define the non-isotropic H\"{o}lder spaces on $\mathbb{T}_{\mathbb{G}}$ and their dual spaces, where it suffices to replace the distance $d_{cc}$ with $d_{cc}^{\mathbb{T}_{\mathbb{G}}}$.
\end{Rem}

\section{Space-regularity and Lipschitz continuity of \texorpdfstring{$U$}{U}}\label{Sec_3}

In this section, we dedicate the following part to proving Proposition \ref{Prop_MFG system wellposed regularity}-\ref{Prop_Lip. ctn. of U}.

\subsection{Regularity results of \texorpdfstring{$U$}{U} in \texorpdfstring{$x$}{x}}

\begin{proof}[\bf{Proof of Proposition \ref{Prop_MFG system wellposed regularity}}]
Without loss of generality, let $t_{0}=0$; other cases reduce to this via $\tilde{t}=t-t_0$. We prove the existence by using the Schauder fixed point theorem. Set
\begin{equation}\label{SFPT_E Def.}
E:=\left\{m \in C\left([t_0,T];\mathcal{P}\left(\mathbb{T}_{\mathbb{G}}\right)\right)\text{ s.t. }d_1\left(m\left(t_1\right), m\left(t_2\right)\right) \leq c_E|t_1-t_2|^\frac{1}{2}\right\},
\end{equation}
where the constant $c_E>0$ (to be determined) does not depend on $m$. It is easy to verify that E is a convex compact space with the norm
\begin{equation*}
\|m\|_{E}:=\sup_{t\in[t_0,T]}d_1(m(t),0)+\sup_{t_1 \neq t_2}\frac{d_1(m(t_1),m(t_2))}{|t_1-t_2|^{\frac{1}{2}}},\,m\in E.
\end{equation*}
We construct a map $\Phi: E \to E$ as follows.

Fix any $\mu \in E$ and consider the following HJB equation
\begin{equation}\label{SFPT_HJB}
\begin{cases}
-\partial_t u-\Delta_{\mathcal{X}}u+\frac{1}{2}\left|D_{\mathcal{X}} u\right|^2=F(x, \mu(t)), & \text{in } [t_0,T] \times \mathbb{T}_{\mathbb{G}}, \\
u(T,x)=G(x, \mu(T)), & \text{in } \mathbb{T}_{\mathbb{G}}.
\end{cases}
\end{equation}
From assumptions \ref{assum1} and \ref{assum2}, we know that $F\left(x,\mu(t)\right) \in C_{\mathcal{X}}^{\frac{\alpha}{2},k-2+\alpha}\left([t_0,T]\times\mathbb{T}_{\mathbb{G}}\right)$ and $G\left(x,\mu(T)\right) \in C_{\mathcal{X}}^{k+\alpha}\left(\mathbb{T}_{\mathbb{G}}\right)$ with the corresponding norms independent of $\mu$. We use the Hopf transform to turn \eqref{SFPT_HJB} equivalently into a linear form, which is satisfied by $w:=\exp\left(-\frac{u}{2}\right)$, that is
\begin{equation*}
\begin{cases}
-\partial_t w-\Delta_{\mathcal{X}}w+\frac{1}{2}F(x, \mu(t))w=0, & \text{in } [t_0,T] \times \mathbb{T}_{\mathbb{G}}, \\
w(T,x)=\exp\left(-\frac{G(x, \mu(T))}{2}\right), & \text{in } \mathbb{T}_{\mathbb{G}}.
\end{cases}
\end{equation*}
It follows from Lemma \ref{Lem_LHJB well-posedness} that there exists a unique solution $w\in C_{\mathcal{X}}^{1,2} \left([t_0,T] \times \mathbb{T}_{\mathbb{G}} \right)$ to the above equation, and
\begin{equation}\label{w>0}
w(t,x)=\int_{\mathbb{T}_{\mathbb{G}}}\sum_{k\in\mathbb{Z}^{n}}\Gamma_w(T-t+t_0,k\circ x;t_0,y)\exp\left(-\frac{G(y, \mu(T))}{2}\right)dy \geq \exp\left(-\frac{c_G}{2}\right)>0
\end{equation}
for any $(t,x)\in[t_0,T]\times\mathbb{T}_{\mathbb{G}}$, where $\Gamma_w$ is the fundamental solution for $\partial_t-\Delta_{\mathcal{X}}+\frac{1}{2}F(x, \mu(t))$.

Therefore, there exists a unique solution $u\in C_{\mathcal{X}}^{1,2} \left([t_0,T] \times \mathbb{T}_{\mathbb{G}} \right)$ to equation \eqref{SFPT_HJB}. Moreover, according to \cite[Proposition 4.1]{JWY}, we obtain
\begin{align}\label{HJB_Schauder estimate}
& \left\|u\right\|_{C^{\frac{\alpha}{2},k+\alpha}_{\mathcal{X}}\left([t_0,T]\times\mathbb{T}_{\mathbb{G}}\right)} +\left\|u\right\|_{C^{\frac{2+\alpha}{2},2+\alpha}_{\mathcal{X}}\left([t_0,T]\times\mathbb{T}_{\mathbb{G}}\right)} \\
\leq & C\left(\left\|w\right\|_{C^{\frac{\alpha}{2},k+\alpha}_{\mathcal{X}}\left([t_0,T]\times\mathbb{T}_{\mathbb{G}}\right)} +\left\|w\right\|_{C^{1+\frac{\alpha}{2},\alpha}_{\mathcal{X}}\left([t_0,T]\times\mathbb{T}_{\mathbb{G}}\right)}\right) \leq C\sup_{m \in \mathcal{P}(\mathbb{T}_{\mathbb{G}})} \left\|G(\cdot,m)\right\|_{C_{\mathcal{X}}^{k+\alpha}\left(\mathbb{T}_{\mathbb{G}}\right)}, \notag
\end{align}
where $C>0$ depends on $\mathbb{G}$, $\sup_{m \in \mathcal{P}(\mathbb{T}_{\mathbb{G}})}\left\|G(\cdot,m)\right\|_{L^{\infty}\left(\mathbb{T}_{\mathbb{G}}\right)}$, $\sup_{m \in \mathcal{P}(\mathbb{T}_{\mathbb{G}})}\left\|F\left(\cdot,m\right)\right\|_{C_{\mathcal{X}}^{k-2+\alpha}\left(\mathbb{T}_{\mathbb{G}}\right)}$, $\sup_{m \in \mathcal{P}(\mathbb{T}_{\mathbb{G}})}\left\|\frac{\delta F}{\delta m}(\cdot,m,\cdot)\right\|_{C_{\mathcal{X},y}^{0+1}\left(\mathbb{T}_{\mathbb{G}};C_{\mathcal{X},x}^{k-2+\alpha}\left(\mathbb{T}_{\mathbb{G}}\right)\right)}$, $T$, $k$, $\alpha$ and $c_E$ only.

Consider the following FPK equation
\begin{equation}\label{SFPT_FPK}
\begin{cases}
\partial_t m-\Delta_{\mathcal{X}} m-\operatorname{div}_{\mathcal{X}}\left(m D_{\mathcal{X}}u\right)=0, & \text{in } [t_0,T] \times \mathbb{T}_{\mathbb{G}}, \\
m(t_0)=m_0, & \text{in } \mathbb{T}_{\mathbb{G}}.
\end{cases}
\end{equation}
Since $m_0 \in \mathcal{P}(\mathbb{T}_{\mathbb{G}}) \subset C_{\mathcal{X}}^{-k}\left([0,1)^n\right)\cap C_{\mathcal{X}}^{-(k+\alpha)}(\mathbb{T}_{\mathbb{G}})$, by Lemma \ref{Lem_general FPK wellposed regularity}, there exists a unique distributional solution $m(t) \in C\left([t_0,T];C_{\mathcal{X}}^{-(k+\alpha)}(\mathbb{T}_{\mathbb{G}})\right)$ to equation \eqref{SFPT_FPK}. Then we can define $\Phi(\mu)=m$. Let us check that $m \in E$.

From \eqref{general FPK_weak formulation} and Lemma \ref{Lem_LHJB well-posedness}, we have that for any $t\in[t_0,T]$ and $\xi\in C_\mathcal{X}^{k+\alpha}(\mathbb{T}_{\mathbb{G}})$,
\begin{equation*}
\left\langle m(t),\xi\right\rangle =\left\langle m_0,z(t_0,\cdot)\right\rangle =\int_{\mathbb{T}_{\mathbb{G}}}\int_{\mathbb{T}_{\mathbb{G}}}\sum_{k\in\mathbb{Z}^{n}}\Gamma_z(t,k\circ x;t_0,y)\xi(y)dy dm_0(x),
\end{equation*}
where $\Gamma_z$ is the fundamental solution for $\partial_t-\Delta_\mathcal{X}+D_{\mathcal{X}}u\cdot D_{\mathcal{X}}$. For any Borel set $A \subset \mathbb{T}_{\mathbb{G}}$, there exists a smooth non-negative sequence $\{\xi^{i}\}_{i=1}^{+\infty} \subset C^{\infty}\left(\mathbb{T}_{\mathbb{G}}\right)$ converges to $1_A$. Then, for any $t\in[t_0,T]$,
\begin{equation*}
\left\langle m(t),1_A\right\rangle =\lim_{i\to+\infty}\left\langle m(t),\xi^{i}\right\rangle =\int_{\mathbb{T}_{\mathbb{G}}}\int_{A}\sum_{k\in\mathbb{Z}^{n}}\Gamma_z(t,k\circ x;t_0,y)dy dm_0(x) \in [0,1].
\end{equation*}
In particular, we can get $\left\langle m(t),1_{\mathbb{T}_{\mathbb{G}}}\right\rangle=1$. Hence $m(t) \in \mathcal{P}(\mathbb{T}_{\mathbb{G}})$ for any $t\in[t_0,T]$.

As for the H\"{o}lder continuity, fix any $t_1,t_2\in[t_0,T]$ with $t_1\neq t_2$, and choose any $\xi\in C_\mathcal{X}^{0+1}(\mathbb{T}_{\mathbb{G}})$, we can find that
\begin{align}\label{d_1(m(t_1),m(t_2))}
& \int_{\mathbb{T}_{\mathbb{G}}}\xi(x) d\left(m(t_1)-m(t_2)\right)(x) \\
= & \int_{\mathbb{T}_{\mathbb{G}}}\int_{\mathbb{T}_{\mathbb{G}}}\sum_{k\in\mathbb{Z}^{n}}\left(\Gamma_z(t_1,k\circ x;t_0,y)-\Gamma_z(t_2,k\circ x;t_0,y)\right)\xi(y)dy dm_0(x) \notag\\
= & \int_{\mathbb{T}_{\mathbb{G}}}\left(z(T-t_1+t_0,x)-z(T-t_2+t_0,x)\right)dm_0(x).\notag
\end{align}
Here, by Lemma \ref{Lem_LHJB Lipschitz}, $z\in C_{{\mathcal{X}}}^{1,2} \left([t_0,T)\times\mathbb{T}_{\mathbb{G}}\right)\cap C([t_0,T] \times \mathbb{T}_{\mathbb{G}})$ is the unique solution to the equation
\begin{equation*}
\begin{cases}
-\partial_tz-\Delta_\mathcal{X}z+D_{\mathcal{X}}u\cdot D_{\mathcal{X}}z=0, & \text{ in } [t_0,T) \times \mathbb{T}_{\mathbb{G}}, \\
z(T)=\xi, & \text{ in } \mathbb{T}_{\mathbb{G}},
\end{cases}
\end{equation*}
and
\begin{equation*} \left\|z\left(T-t_1+t_0,\cdot\right)-z(T-t_2+t_0,\cdot)\right\|_{L^{\infty}\left(\mathbb{T}_{\mathbb{G}}\right)}
\leq C\left\|\xi\right\|_{C_{\mathcal{X}}^{0+1}\left(\mathbb{T}_{\mathbb{G}}\right)} \left|t_1-t_2\right|^{\frac{1}{2}},
\end{equation*}
where the constant $C>0$ depends on $\mathbb{G}$, $T$ and $\|D_{\mathcal{X}}u\|_{L^{\infty}\left((t_0,T)\times\mathbb{T}_{\mathbb{G}}\right)}$ only. Since for a fixed $x_0\in\mathbb{T}_{\mathbb{G}}$,
\begin{equation*}
\int_{\mathbb{T}_{\mathbb{G}}}\xi(x) d\left(m(t_1)-m(t_2)\right)(x)=\int_{\mathbb{T}_{\mathbb{G}}}\left(\xi(x)-\xi(x_0)\right) d\left(m(t_1)-m(t_2)\right)(x),
\end{equation*}
we can assume without loss of generality that $\xi(x_0)=0$ for some $x_0\in\mathbb{T}_{\mathbb{G}}$. This yields that $\left\|\xi\right\|_{C_{\mathcal{X}}^{0+1}\left(\mathbb{T}_{\mathbb{G}}\right)}\leq c\left[\xi\right]_{C_{\mathcal{X}}^{0+1}\left(\mathbb{T}_{\mathbb{G}}\right)}$. Hence, taking the supremum over $\xi\in C_\mathcal{X}^{0+1}(\mathbb{T}_{\mathbb{G}})$ with $d_{cc}^{\mathbb{T}_{\mathbb{G}}}$-Lipschitz constant bounded by $1$ in \eqref{d_1(m(t_1),m(t_2))}, we get
\begin{equation}\label{FPK_Horlder ctn.}
\sup_{t_1 \neq t_2}\frac{d_1(m(t_1),m(t_2))}{|t_1-t_2|^{\frac{1}{2}}} \leq C=:c_E,
\end{equation}
where the constant $C>0$ depends on $\mathbb{G}$, $T$ and $\|D_{\mathcal{X}}u\|_{L^{\infty}\left((t_0,T)\times\mathbb{T}_{\mathbb{G}}\right)}$ only. Thus leading to $m\in E$.

It remains for us to prove that $\Phi$ is continuous. For any $\mu^{i}\to \mu$ in $E$ as $i\to+\infty$, let $u^{i}$ and $m^{i}$ be the corresponding solutions to the equations \eqref{SFPT_HJB} and \eqref{SFPT_FPK} respectively. Using Lemma \ref{Lem_LHJB well-posedness} again, then, for any $i,j \in \mathbb{Z}_+$, $\bar{u}^{i,j}:=u^{i}-u^{j}\in C_{\mathcal{X}}^{1,2} \left([t_0,T] \times \mathbb{T}_{\mathbb{G}} \right)$ is the unique solution to the following linear degenerate PDE:
\begin{equation*}
\begin{cases}
-\partial_t \bar{u}^{i,j}-\Delta_{\mathcal{X}}\bar{u}^{i,j}+\frac{1}{2}D_{\mathcal{X}}\left(u^{i}+u^{j}\right)\cdot D_{\mathcal{X}}\bar{u}^{i,j}=F(x, \mu^{i}(t))-F(x, \mu^{j}(t)), & \text{in } [t_0,T] \times \mathbb{T}_{\mathbb{G}}, \\ \bar{u}^{i,j}(T,x)=G(x, \mu^{i}(T))-G(x, \mu^{j}(T)), & \text{in } \mathbb{T}_{\mathbb{G}}.
\end{cases}
\end{equation*}
From \cite[Proposition 4.1]{JWY}, \ref{assum1} and \ref{assum2}, we further obtain
\begin{align*}
& \left\|\bar{u}^{i,j}\right\|_{C^{\frac{\alpha}{2},k+\alpha}_{\mathcal{X}}\left([t_0,T]\times\mathbb{T}_{\mathbb{G}}\right)} +\left\|\bar{u}^{i,j}\right\|_{C^{\frac{2+\alpha}{2},2+\alpha}_{\mathcal{X}}\left([t_0,T]\times\mathbb{T}_{\mathbb{G}}\right)} \\
\leq & C\left(\left\|G(\cdot,\mu^{i}(T))-G(\cdot,\mu^{j}(T))\right\|_{C_{\mathcal{X}}^{k+\alpha}\left(\mathbb{T}_{\mathbb{G}}\right)} +\left\|F(\cdot, \mu^{i}(\cdot))-F(\cdot, \mu^{j}(\cdot))\right\|_{C_{\mathcal{X}}^{\frac{\alpha}{2},k-2+\alpha}\left([t_0,T]\times \mathbb{T}_{\mathbb{G}}\right)}\right) \\
\leq & C\left(\sup_{m \in \mathcal{P}(\mathbb{T}_{\mathbb{G}})}\left\|\frac{\delta G}{\delta m}(\cdot,m,\cdot)\right\|_{C_{\mathcal{X},y}^{0+1}\left(\mathbb{T}_{\mathbb{G}};C_{\mathcal{X},x}^{k+\alpha}\left(\mathbb{T}_{\mathbb{G}}\right)\right)} d_1(\mu^{i}(T),\mu^{j}(T))\right. \\
& \left.+\sup_{m \in \mathcal{P}(\mathbb{T}_{\mathbb{G}})}\left\|\frac{\delta F}{\delta m}(\cdot,m,\cdot)\right\|_{C_{\mathcal{X},y}^{0+1}\left(\mathbb{T}_{\mathbb{G}};C_{\mathcal{X},x}^{k-2+\alpha}\left(\mathbb{T}_{\mathbb{G}}\right)\right)} \left\|\mu^{i}-\mu^{j}\right\|_{E}\right. \\
& \left.+\operatorname{Lip}\left(\frac{\delta F}{\delta m}\right)\int_{0}^{1}\left\|(1-s)\mu^{i}+s\mu^{j}\right\|_{E}ds\sup_{t\in[t_0,T]}d_1(\mu^{i}(t),\mu^{j}(t))\right),
\end{align*}
where $C>0$ depends on $\mathbb{G}$, $\left\|u^{i}+u^{j}\right\|_{C_{\mathcal{X}}^{\frac{\alpha}{2},k+\alpha}\left([t_0,T]\times \mathbb{T}_{\mathbb{G}}\right)}$, $T$, $k$ and $\alpha$ only. 
Letting $i,j \to +\infty$, since $\left\{u^{i}\right\}_{i=1}^{+\infty}$ is uniformly bounded in $C_{\mathcal{X}}^{\frac{\alpha}{2},k+\alpha}\left([t_0,T]\times \mathbb{T}_{\mathbb{G}}\right)$ by \eqref{HJB_Schauder estimate}, we obtain that $\left\{u^{i}\right\}_{i=1}^{+\infty}$ is a Cauchy sequence. Thus, we have $u^{i} \to u$ in $C_{\mathcal{X}}^{\frac{2+\alpha}{2},2+\alpha}\left([t_0,T]\times\mathbb{T}_{\mathbb{G}}\right)\cap C^{\frac{\alpha}{2},k+\alpha}_{\mathcal{X}}\left([t_0,T]\times\mathbb{T}_{\mathbb{G}}\right)$ as $i\to+\infty
$ due to the uniqueness of the solution to equation \eqref{SFPT_HJB}.

Now we need to show $m^{i} \to m$ in $E$ as $i\to+\infty$. Choose any $\xi\in C_\mathcal{X}^{0+1}(\mathbb{T}_{\mathbb{G}})$ with $\xi(x_0)=0$ for some $x_0\in\mathbb{T}_{\mathbb{G}}$, and let $\xi^{\varepsilon}$ be the mollified versions of $\xi$, satisfying
\begin{equation*}
\left[\xi^{\varepsilon}\right] _{C_{\mathcal{X}}^{0+1}\left(\mathbb{T}_{\mathbb{G}}\right)} \leq C\left[\xi\right] _{C_{\mathcal{X}}^{0+1}\left(\mathbb{T}_{\mathbb{G}}\right)},\,\varepsilon>0
\end{equation*}
and $\left\|\xi^{\varepsilon}-\xi\right\|_{L^{\infty}\left(\mathbb{T}_{\mathbb{G}}\right)}\to 0$ as $\varepsilon \to 0$ (see \cite[Proposition 1.2]{JWY}). From Lemma \ref{Lem_LHJB well-posedness}, there exist unique solutions $z^{\varepsilon,i},z^{\varepsilon}\in C_{{\mathcal{X}}}^{1,2}\left([t_0,T]\times\mathbb{T}_{\mathbb{G}}\right)$ to the equation
\begin{equation*}\label{dual FPK with xi_varepsilon}
\begin{cases}
-\partial_tz-\Delta_\mathcal{X}z+b\cdot D_{\mathcal{X}}z=0, & \text{ in } [t_0,T] \times \mathbb{T}_{\mathbb{G}}, \\
z(T)=\xi^{\varepsilon}, & \text{ in } \mathbb{T}_{\mathbb{G}}
\end{cases}
\end{equation*}
with $b=D_{\mathcal{X}}u^{i}$ and $b=D_{\mathcal{X}}u$ respectively. Set $\bar{m}^{i}:=m^{i}-m$ and $\bar{z}^{\varepsilon,i}:=z^{\varepsilon,i}-z^{\varepsilon}$. In the same way as in \eqref{d_1(m(t_1),m(t_2))}, we obtain that for any $t\in[t_0,T]$,
\begin{equation*}
\int_{\mathbb{T}_{\mathbb{G}}}\xi(x) d\bar{m}^{i}(t)(x) =\lim_{\varepsilon\to0}\int_{\mathbb{T}_{\mathbb{G}}}\xi^{\varepsilon}(x) d\bar{m}^{i}(t)(x) =\lim_{\varepsilon\to0}\int_{\mathbb{T}_{\mathbb{G}}}\bar{z}^{\varepsilon,i}(T-t+t_0,x)dm_0(x).
\end{equation*}
Likewise, we know that $\bar{z}^{\varepsilon,i}\in C_{{\mathcal{X}}}^{1,2} \left([t_0,T]\times\mathbb{T}_{\mathbb{G}}\right)$ is the unique solution to the equation
\begin{equation*}
\begin{cases}
-\partial_t\bar{z}^{\varepsilon,i}-\Delta_\mathcal{X}\bar{z}^{\varepsilon,i}+D_{\mathcal{X}}u^{i}\cdot D_{\mathcal{X}}\bar{z}^{\varepsilon,i}=D_{\mathcal{X}}(u-u^{i})\cdot D_{\mathcal{X}}z^{\varepsilon}, & \text{ in } [t_0,T] \times \mathbb{T}_{\mathbb{G}}, \\
\bar{z}^{\varepsilon,i}(T)=0, & \text{ in } \mathbb{T}_{\mathbb{G}}.
\end{cases}
\end{equation*}
Using Lemma \ref{Lem_LHJB Lipschitz} and \cite[Theorem 1.3(1)]{25JWY}, we get
\begin{align*}
\int_{\mathbb{T}_{\mathbb{G}}}\xi(x) d\bar{m}^{i}(t)(x) \leq \lim_{\varepsilon\to0}\left\|\bar{z}^{\varepsilon,i}\right\|_{L^{\infty}\left([t_0,T]\times \mathbb{T}_{\mathbb{G}}\right)} \leq & \lim_{\varepsilon\to0}C\left\|D_{\mathcal{X}}(u-u^{i})\cdot D_{\mathcal{X}}z^{\varepsilon}\right\|_{L^{\infty}\left([t_0,T]\times \mathbb{T}_{\mathbb{G}}\right)} \\
\leq & C\left\|u-u^{i}\right\|_{C_{\mathcal{X}}^{0,1}\left([t_0,T]\times\mathbb{T}_{\mathbb{G}}\right)}\left[\xi\right]_{C_\mathcal{X}^{0+1}(\mathbb{T}_{\mathbb{G}})},
\end{align*}
where the constant $C>0$ depends on $\mathbb{G}$, $T$, $\|D_{\mathcal{X}}u^{i}\|_{L^{\infty}\left((0,T)\times\mathbb{T}_{\mathbb{G}}\right)}$ and $\|D_{\mathcal{X}}u\|_{L^{\infty}\left((0,T)\times\mathbb{T}_{\mathbb{G}}\right)}$ only. This leads to $\sup_{t\in[t_0,T]}d_1(m^{i}(t),m(t))\to0$ as $i\to+\infty$. Moreover, for any $t_1,t_2\in[t_0,T]$ with $t_1\neq t_2$, we also have
\begin{align*}
\int_{\mathbb{T}_{\mathbb{G}}}\xi(x) d\left(\bar{m}^{i}(t_1)-\bar{m}^{i}(t_2)\right)(x) = & \lim_{\varepsilon\to0}\int_{\mathbb{T}_{\mathbb{G}}}\left(\bar{z}^{\varepsilon,i}(T-t_1+t_0,x)-\bar{z}^{\varepsilon,i}(T-t_2+t_0,x)\right)dm_0(x) \\
\leq & \lim_{\varepsilon\to0}C\left\|D_{\mathcal{X}}(u-u^{i})\cdot D_{\mathcal{X}}z^{\varepsilon}\right\|_{L^{\infty}\left([t_0,T]\times \mathbb{T}_{\mathbb{G}}\right)}|t_1-t_2|^{\frac{1}{2}} \\
\leq & C\left\|u-u^{i}\right\|_{C_{\mathcal{X}}^{0,1}\left([t_0,T]\times\mathbb{T}_{\mathbb{G}}\right)}\left[\xi\right]_{C_\mathcal{X}^{0+1}(\mathbb{T}_{\mathbb{G}})}|t_1-t_2|^{\frac{1}{2}},
\end{align*}
where the constant $C>0$ depends on $\mathbb{G}$, $T$, $\|D_{\mathcal{X}}u^{i}\|_{L^{\infty}\left((0,T)\times\mathbb{T}_{\mathbb{G}}\right)}$ and $\|D_{\mathcal{X}}u\|_{L^{\infty}\left((0,T)\times\mathbb{T}_{\mathbb{G}}\right)}$ only. Therefore, we finally get $\|m^{i}-m\|_{E}\to0$ as $i\to+\infty$. This concludes the proof of continuity.

Applying the Schauder fixed point theorem, we obtain that there exists a solution
$$
(u,m) \in C_{\mathcal{X}}^{\frac{2+\alpha}{2},2+\alpha}\left([t_0,T]\times\mathbb{T}_{\mathbb{G}}\right) \times C^{\frac{1}{2}}\left([t_0,T];\mathcal{P}(\mathbb{T}_{\mathbb{G}})\right)
$$
to the MFG system \eqref{MFG system}, and the regularity results are given by \eqref{HJB_Schauder estimate} and \eqref{FPK_Horlder ctn.}. To prove the uniqueness, let $\left(u_{1},m_{1}\right)$ and $\left(u_{2},m_{2}\right)$ be two solutions to the MFG system \eqref{MFG system} with initial conditions $\left(t_0, m_0^1\right)$ and $\left(t_0, m_0^2\right)$ respectively. Then $\left(\bar{u},\bar{m}\right):=\left(u_{1}-u_{2},m_{1}-m_{2}\right)$ satisfies
\begin{equation*}
\begin{cases}
-\partial_t \bar{u}-\Delta_{\mathcal{X}}\bar{u}+\frac{1}{2}D_{\mathcal{X}}(u_{1}+u_{2})\cdot D_{\mathcal{X}}\bar{u}=F(x,m_{1}(t))-F(x,m_{2}(t)), & \mbox{in } [t_0,T] \times \mathbb{T}_{\mathbb{G}}, \\
\partial_t \bar{m}-\Delta_{\mathcal{X}} \bar{m}-\operatorname{div}_{\mathcal{X}}\left(\bar{m} D_{\mathcal{X}}u_{1}\right)=\operatorname{div}_{\mathcal{X}}\left(m_{2}D_{\mathcal{X}}\bar{u}\right), & \mbox{in } [t_0,T] \times \mathbb{T}_{\mathbb{G}}, \\
\bar{u}(T,x)=G(x,m_{1}(T))-G(x,m_{2}(T)),\quad \bar{m}(t_0)=m_0^1-m_0^2, & \mbox{in } \mathbb{T}_{\mathbb{G}}.
\end{cases}
\end{equation*}
Following the same derivation as for \eqref{general FPK_weak formulation} and using \eqref{monotonicity_F}, \eqref{monotonicity_G}, we can obtain
\begin{align}\label{MFG_L-L monoton. argument}
& \int_{\mathbb{T}_{\mathbb{G}}}\bar{u}(t_0,x)d\bar{m}(t_0)(x) -\int_{t_0}^{T}\int_{\mathbb{T}_{\mathbb{G}}}\left|D_{\mathcal{X}}\bar{u}(t,x)\right|^2dm_{2}(t)(x)dt \\
= & \int_{\mathbb{T}_{\mathbb{G}}}\left(G(x,m_{1}(T))-G(x,m_{2}(T))\right)d\bar{m}(T)(x) \notag\\
& +\int_{t_0}^{T}\int_{\mathbb{T}_{\mathbb{G}}}\left(F(x,m_{1}(t))-F(x,m_{2}(t))\right)d\bar{m}(t)(x)dt \geq 0.\notag
\end{align}
Since $\bar{m}(t_0)=0$, we get $D_{\mathcal{X}}\bar{u}=0$, hence $\bar{m}=0$ by the uniqueness of the weak solution to the FPK equation \eqref{general FPK}. This leads to $\bar{u}=0$ due to the uniqueness of the solution to the equation \eqref{LHJB}.

Moreover, if $m_0$ is absolutely continuous with a smooth positive density, then, by Lemma \ref{Lem_LHJB well-posedness}, there exists a unique solution $m\in C_{\mathcal{X}}^{1,2} \left([t_0, T] \times \mathbb{T}_{\mathbb{G}} \right)$ to the linear degenerate equation
\begin{equation*}
\begin{cases}
\partial_t m-\Delta_{\mathcal{X}} m-D_{\mathcal{X}}u\cdot D_{\mathcal{X}}m-\operatorname{div}_{\mathcal{X}}\left(D_{\mathcal{X}}u\right)m=0, & \text{ in } [t_0,T] \times \mathbb{T}_{\mathbb{G}}, \\
m(t_0)=m_0, & \text{ in } \mathbb{T}_{\mathbb{G}}.
\end{cases}
\end{equation*}
By \cite[Proposition 4.1]{JWY}, we have $m \in C_{\mathcal{X}}^{\frac{2+\alpha}{2},2+\alpha}\left([t_0,T]\times\mathbb{T}_{\mathbb{G}}\right)\cap C^{\frac{\alpha}{2},k+\alpha}_{\mathcal{X}}\left([t_0,T]\times\mathbb{T}_{\mathbb{G}}\right)$. Since $m_0>0$, similar to \eqref{w>0}, it can be obtained that $m>0$.

Furthermore, if $m_0^{i} \to m_0$ in $\mathcal{P}\left(\mathbb{T}_{\mathbb{G}}\right)$, it follows from \eqref{MFG_regularity} that $\{(u^{i},m^{i})\}_{i=1}^{+\infty}$ is uniformly bounded in $C_{\mathcal{X}}^{\frac{2+\alpha}{2},2+\alpha}\left([t_0,T]\times\mathbb{T}_{\mathbb{G}}\right)\cap C^{\frac{\alpha}{2},k+\alpha}_{\mathcal{X}}\left([t_0,T]\times\mathbb{T}_{\mathbb{G}}\right)\times C^{\frac{1}{2}}\left([t_0,T];\mathcal{P}(\mathbb{T}_{\mathbb{G}})\right)$. For any $i,j\in\mathbb{Z}_+$, $\left(\hat{u}^{i,j},\hat{m}^{i,j}\right):=\left(u^{i}-u^{j},m^{i}-m^{j}\right)$ satisfies
\begin{equation}\label{linear MFG system_hat u,hat m}
\begin{cases}
-\partial_t \hat{u}^{i,j}-\Delta_{\mathcal{X}}\hat{u}^{i,j}+\frac{1}{2}D_{\mathcal{X}}(u^{i}+u^{j})\cdot D_{\mathcal{X}}\hat{u}^{i,j}=F(x,m^{i}(t))-F(x,m^{j}(t)), & \mbox{in } [t_0,T] \times \mathbb{T}_{\mathbb{G}}, \\
\partial_t \hat{m}^{i,j}-\Delta_{\mathcal{X}} \hat{m}^{i,j}-\operatorname{div}_{\mathcal{X}}\left(\hat{m}^{i,j} D_{\mathcal{X}}u^{i}\right)=\operatorname{div}_{\mathcal{X}}\left(m^{j}D_{\mathcal{X}}\hat{u}^{i,j}\right), & \mbox{in } [t_0,T] \times \mathbb{T}_{\mathbb{G}}, \\
\hat{u}^{i,j}(T,x)=G(x,m^{i}(T))-G(x,m^{j}(T)),\quad \hat{m}^{i,j}(t_0)=m_0^i-m_0^j, & \mbox{in } \mathbb{T}_{\mathbb{G}}.
\end{cases}
\end{equation}
Using Lemma \ref{Lem_LHJB wellposed regularity}, \ref{assum1} and \ref{assum2}, we have that for any $t\in[t_0,T]$,
\begin{align}\label{hat u_i,j_USE}
& \sup_{\tau\in[t_0, t]}\|\hat{u}^{i,j}(\tau, \cdot)\|_{C_{\mathcal{X}}^{k+\alpha}\left(\mathbb{T}_{\mathbb{G}}\right)} \notag\\
\leq & C\left(\left\|G(x,m^{i}(t))-G(x,m^{j}(t))\right\|_{C_{\mathcal{X}}^{k+\alpha}\left(\mathbb{T}_{\mathbb{G}}\right)}\right. \notag\\
& \left.+\sup_{\tau\in(t_0,t)}\left\|F(\cdot,m^{i}(\tau))-F(\cdot,m^{j}(\tau))\right\|_{C_{\mathcal{X}}^{k-1+\alpha}\left(\mathbb{T}_{\mathbb{G}}\right)}\right) \notag\\
\leq & C\left(\sup_{m \in \mathcal{P}(\mathbb{T}_{\mathbb{G}})}\left\|\frac{\delta G}{\delta m}(\cdot,m,\cdot)\right\|_{C_{\mathcal{X},y}^{0+1}\left(\mathbb{T}_{\mathbb{G}};C_{\mathcal{X},x}^{k+\alpha}\left(\mathbb{T}_{\mathbb{G}}\right)\right)} d_1(m^{i}(t),m^{j}(t))\right. \\
& +\left.\sup_{m \in \mathcal{P}(\mathbb{T}_{\mathbb{G}})}\left\|\frac{\delta F}{\delta m}(\cdot,m,\cdot)\right\|_{C_{\mathcal{X},y}^{0+1}\left(\mathbb{T}_{\mathbb{G}};C_{\mathcal{X},x}^{k-1+\alpha}\left(\mathbb{T}_{\mathbb{G}}\right)\right)} \sup_{\tau\in(t_0,t)}d_1(m^{i}(\tau),m^{j}(\tau))\right),\notag
\end{align}
where $C>0$ depends on $\mathbb{G}$, $\alpha$, $k$, $T$ and $\sup_{t\in(t_0,T)}\|(u^{i}+u^{j})(t,\cdot)\|_{C_{\mathcal{X}}^{k+\alpha}\left(\mathbb{T}_{\mathbb{G}}\right)}$ only.

Choose any $\xi\in C_\mathcal{X}^{0+1}(\mathbb{T}_{\mathbb{G}})$ with $\xi(x_0)=0$ for some $x_0\in\mathbb{T}_{\mathbb{G}}$, and let $\xi^{\varepsilon}$ be the mollified versions, satisfying
\begin{equation*}
\left[\xi^{\varepsilon}\right] _{C_{\mathcal{X}}^{0+1}\left(\mathbb{T}_{\mathbb{G}}\right)} \leq C\left[\xi\right] _{C_{\mathcal{X}}^{0+1}\left(\mathbb{T}_{\mathbb{G}}\right)},\,\varepsilon>0
\end{equation*}
and $\left\|\xi^{\varepsilon}-\xi\right\|_{L^{\infty}\left(\mathbb{T}_{\mathbb{G}}\right)}\to 0$ as $\varepsilon \to 0$ (see \cite[Proposition 1.2]{JWY}). From Lemma \ref{Lem_LHJB well-posedness}, there exist a unique solution $z^{\varepsilon}\in C_{{\mathcal{X}}}^{1,2}\left([t_0,t]\times\mathbb{T}_{\mathbb{G}}\right)$ to the equation \eqref{dual FPK} with $b=D_{\mathcal{X}}u^{i}$, $f=0$ and $\xi=\xi^{\varepsilon}$. By referring to \eqref{general FPK_weak formulation} and using \cite[Theorem 1.3(1)]{25JWY}, we have
\begin{align*}
\int_{\mathbb{T}_{\mathbb{G}}}\xi(x) d\hat{m}^{i,j}(t)(x) = & \lim_{\varepsilon\to0}\int_{\mathbb{T}_{\mathbb{G}}}z^{\varepsilon}(t_0,x)d\hat{m}^{i,j}(t_0)(x) \\
& +\lim_{\varepsilon\to0}\int_{t_0}^{t}\int_{\mathbb{T}_{\mathbb{G}}} D_{\mathcal{X}}z^{\varepsilon}(s,x)\cdot D_{\mathcal{X}}\hat{u}^{i,j}(s,x)dm^{j}(s)(x)ds \\
\leq & C\left[\xi\right]_{C_\mathcal{X}^{0+1}(\mathbb{T}_{\mathbb{G}})}\left(d_1(m_0^{i},m_0^{j}) +\int_{t_0}^{t}\left\|\hat{u}^{i,j}\right\|_{C_{\mathcal{X}}^{0,1}\left([t_0,s]\times\mathbb{T}_{\mathbb{G}}\right)}ds\right),
\end{align*}
where $C>0$ depends on $\mathbb{G}$, $\|u^{i}\|_{C_{\mathcal{X}}^{0,1}\left((t_0,T)\times\mathbb{T}_{\mathbb{G}}\right)}$ and $T$ only. Hence,
\begin{equation*}
d_1(m^{i}(t),m^{j}(t)) \leq C\left(d_1(m_0^{i},m_0^{j}) +\int_{t_0}^{t}\left\|\hat{u}^{i,j}\right\|_{C_{\mathcal{X}}^{0,1}\left([t_0,s]\times\mathbb{T}_{\mathbb{G}}\right)}ds\right).
\end{equation*}
Putting this into \eqref{hat u_i,j_USE} yields that
\begin{equation*}
\sup_{\tau\in[t_0, t]}\|\hat{u}^{i,j}(\tau, \cdot)\|_{C_{\mathcal{X}}^{k+\alpha}\left(\mathbb{T}_{\mathbb{G}}\right)} \leq C\left(d_1(m_0^{i},m_0^{j}) +\int_{t_0}^{t}\left\|\hat{u}^{i,j}\right\|_{C_{\mathcal{X}}^{0,1}\left([t_0,s]\times\mathbb{T}_{\mathbb{G}}\right)}ds\right)
\end{equation*}
where $C>0$ depends on $\mathbb{G}$, $\alpha$, $k$, $T$, $c_F$ and $c_G$ only. Applying Gronwall's inequality, we get
\begin{equation*}
\sup_{t\in[t_0,T]}\|\hat{u}^{i,j}(t, \cdot)\|_{C_{\mathcal{X}}^{k+\alpha}\left(\mathbb{T}_{\mathbb{G}}\right)} \leq C d_1(m_0^{i},m_0^{j})\to0 \text{ as }i,j\to+\infty,
\end{equation*}
thus $\sup_{t\in[t_0,T]}d_1(m^{i}(t),m^{j}(t))\to0$ as $i,j\to+\infty$. Letting $i,j\to+\infty$ in \eqref{linear MFG system_hat u,hat m}, we obtain that $\{(u^{i},m^{i})\}_{i=1}^{+\infty}$ is the Cauchy sequence in $C_{\mathcal{X}}^{1,2}\left([t_0,T]\times\mathbb{T}_{\mathbb{G}}\right)\times C\left([t_0,T];\mathcal{P}(\mathbb{T}_{\mathbb{G}})\right)$. According to the stability of the weak solution to the equation \eqref{general FPK} and the uniqueness of the solution to the MFG system \eqref{MFG system}, we finally obtain that $(u^{i},m^{i})\to(u,m)$ in $C_{\mathcal{X}}^{1,2}\left([t_0,T]\times\mathbb{T}_{\mathbb{G}}\right)\times C\left([t_0,T];\mathcal{P}(\mathbb{T}_{\mathbb{G}})\right)$ as $i\to+\infty$.

Finally, the H\"{o}lder regularity of $U$ with respect to $x$ is directly obtained from \eqref{HJB_Schauder estimate}. For any multi-index $I$ with $|I|\leq 2$, the continuity of $X_I U$ in $(t_0,m_0)$ uniformly with respect to $x$ can be simply deduced from the stability of the solution to the MFG system \eqref{MFG system}. Hence $X_I U\in C\left([0,T]\times\mathbb{T}_{\mathbb{G}}\times \mathcal{P}(\mathbb{T}_{\mathbb{G}})\right)$. We have thus completed the proof.
\end{proof}

\subsection{Lipschitz continuity of \texorpdfstring{$U$}{U} in \texorpdfstring{$m_0$}{m0}}
\begin{proof}[\bf{Proof of Proposition \ref{Prop_Lip. ctn. of U}}]
We may assume without loss of generality that $t_{0}=0$.

\emph{Step 1: Monotonicity argument.} Let $\bar{u}:=u_1-u_2$. According to \eqref{MFG_L-L monoton. argument}, since
$$
\left|\bar{u}(t_0,x)-\bar{u}(t_0,y)\right| \leq \left\|D_{\mathcal{X}}\bar{u}(t_0,\cdot)\right\|_{L^{\infty}\left(\mathbb{T}_{\mathbb{G}}\right)} d_{cc}^{\mathbb{T}_{\mathbb{G}}}(x,y),\,t_0\in[0,T],\,x,y\in\mathbb{T}_{\mathbb{G}}
$$
based on \cite[Proposition 4.2(ii)]{07BB}, we have
\begin{equation}\label{Mono. argu.}
\int_{t_0}^{T}\int_{\mathbb{T}_{\mathbb{G}}}\left|D_{\mathcal{X}}\bar{u}(t,x)\right|^2dm_{2}(t)(x)dt \leq \left\|D_{\mathcal{X}}\bar{u}(t_0,\cdot)\right\|_{L^{\infty}\left(\mathbb{T}_{\mathbb{G}}\right)} d_1(m_0^1,m_0^2).
\end{equation}

\emph{Step 2: The estimate on $m_1-m_2$.} Let $\bar{m}:=m_1-m_2$. We assume without loss of generality that $m_0^2$ has a smooth density. Choose any $\xi\in C_\mathcal{X}^{0+1}(\mathbb{T}_{\mathbb{G}})$ with $\xi(x_0)=0$ for some $x_0\in\mathbb{T}_{\mathbb{G}}$, and let $\xi^{\varepsilon}$ be the mollified versions, satisfying
\begin{equation*}
\left[\xi^{\varepsilon}\right] _{C_{\mathcal{X}}^{0+1}\left(\mathbb{T}_{\mathbb{G}}\right)} \leq C\left[\xi\right] _{C_{\mathcal{X}}^{0+1}\left(\mathbb{T}_{\mathbb{G}}\right)},\,\varepsilon>0
\end{equation*}
and $\left\|\xi^{\varepsilon}-\xi\right\|_{L^{\infty}\left(\mathbb{T}_{\mathbb{G}}\right)}\to 0$ as $\varepsilon \to 0$ (see \cite[Proposition 1.2]{JWY}). By Lemma \ref{Lem_LHJB well-posedness}, there exists a unique solution $z^{\varepsilon}\in C_{{\mathcal{X}}}^{1,2} \left([t_0,t]\times\mathbb{T}_{\mathbb{G}}\right)$ to the equation \eqref{dual FPK} with $b=D_{\mathcal{X}}u_1$, $f=0$ and $\xi=\xi^{\varepsilon}$. We refer to \eqref{general FPK_weak formulation} to obtain that for any $t\in[t_0,T]$,
\begin{align*}
\int_{\mathbb{T}_{\mathbb{G}}}\xi(x) d\bar{m}(t)(x) = & \lim_{\varepsilon\to0}\left(\int_{\mathbb{T}_{\mathbb{G}}}z^{\varepsilon}(t_0,x)d\bar{m}(t_0)(x) +\int_{t_0}^{t}\int_{\mathbb{T}_{\mathbb{G}}}\left(D_{\mathcal{X}}\bar{u}\cdot D_{\mathcal{X}}z^{\varepsilon}\right)(s,x)dm_2(s)(x)ds\right) \\
\leq & C\left[\xi\right]_{C_{\mathcal{X}}^{0+1}\left(\mathbb{T}_{\mathbb{G}}\right)}\left(d_1\left(m_0^1,m_0^2\right) +\int_{t_0}^{t}\int_{\mathbb{T}_{\mathbb{G}}}\left|D_{\mathcal{X}}\bar{u}(s,x)\right|dm_2(s)(x)ds\right),
\end{align*}
where the last inequality follows from Lemma \ref{Lem_LHJB Lipschitz}, \cite[Theorem 1.3(1)]{25JWY} and $\left\|\xi\right\|_{C_{\mathcal{X}}^{0+1}\left(\mathbb{T}_{\mathbb{G}}\right)}\leq c\left[\xi\right]_{C_{\mathcal{X}}^{0+1}\left(\mathbb{T}_{\mathbb{G}}\right)}$. Here, the constant $C>0$ depends on $\mathbb{G}$, $\|D_{\mathcal{X}}u_1\|_{L^{\infty}\left((t_0,T)\times\mathbb{T}_{\mathbb{G}}\right)}$ and $T$ only. Using Jensen's inequality and \eqref{Mono. argu.}, we get that for any $t\in[t_0,T]$,
\begin{align*}
\int_{\mathbb{T}_{\mathbb{G}}}\xi(x) d\bar{m}(t)(x) & \leq C\left[\xi\right]_{C_{\mathcal{X}}^{0+1}\left(\mathbb{T}_{\mathbb{G}}\right)}\left(d_1\left(m_0^1,m_0^2\right) +\left(\int_{t_0}^{t}\int_{\mathbb{T}_{\mathbb{G}}}\left|D_{\mathcal{X}}\bar{u}(s,x)\right|^2dm_2(s)(x)ds\right)^{\frac{1}{2}}\right) \\
& \leq C\left[\xi\right]_{C_{\mathcal{X}}^{0+1}\left(\mathbb{T}_{\mathbb{G}}\right)}\left(\left\|D_{\mathcal{X}}\bar{u}(t_0,\cdot)\right\|_{L^{\infty}\left(\mathbb{T}_{\mathbb{G}}\right)}^{\frac{1}{2}} d_1\left(m_0^1,m_0^2\right)^{\frac{1}{2}}+d_1\left(m_0^1,m_0^2\right)\right).
\end{align*}
Hence, we finally get
\begin{equation}\label{m_1-m_2}
\sup_{t \in [t_0,T]} d_1\left(m_1(t), m_2(t)\right) \leq C\left(\left\|D_{\mathcal{X}}\bar{u}(t_0,\cdot)\right\|_{L^{\infty}\left(\mathbb{T}_{\mathbb{G}}\right)}^{\frac{1}{2}} d_1\left(m_0^1,m_0^2\right)^{\frac{1}{2}}+d_1\left(m_0^1,m_0^2\right)\right),
\end{equation}
where the constant $C>0$ depends on $\mathbb{G}$, $\|D_{\mathcal{X}}u_1\|_{L^{\infty}\left((t_0,T)\times\mathbb{T}_{\mathbb{G}}\right)}$ and $T$ only.

\emph{Step 3: The estimate on $u_1-u_2$.} We note that $\bar{u}$ satisfies
$$
\begin{cases}
-\partial_t \bar{u}(t,x)-\Delta_{\mathcal{X}} \bar{u}(t,x)+b(t,x) \cdot D_{\mathcal{X}} \bar{u}(t,x)=R_1(t, x),& \text { in }[t_0,T] \times \mathbb{T}_{\mathbb{G}}, \\
\bar{u}(T,x)=R_T(x),& \text { in }\mathbb{T}_{\mathbb{G}},
\end{cases}
$$
where, for any $(t,x) \in [t_0,T] \times \mathbb{T}_{\mathbb{G}}$,
$$
b(t,x)=\int_{0}^1 D_pH\left(x,sD_\mathcal{X} u_1(t,x)+(1-s) D_\mathcal{X} u_2(t,x)\right)ds,
$$
$$
R_1(t,x)=\int_{0}^1 \int_{\mathbb{T}_{\mathbb{G}}} \frac{\delta F}{\delta m}\left(x,s m_1(t)+(1-s) m_2(t), y\right)d\bar{m}(t)(y)ds
$$
and
$$
R_T(t,x)=\int_{0}^1 \int_{\mathbb{T}_{\mathbb{G}}} \frac{\delta G}{\delta m}\left(x,s m_1(T)+(1-s)m_2(T),y\right)d\bar{m}(T)(y)ds.
$$
From \ref{assum1} and \eqref{m_1-m_2}, we obtain that for any $t \in [t_0,T]$,
\begin{align*}
\left\|R_1(t,\cdot)\right\|_{C_{\mathcal{X}}^{k-1+\alpha}(\mathbb{T}_{\mathbb{G}})} \leq & \sup _{m \in \mathcal{P}(\mathbb{T}_{\mathbb{G}})}\left\|\frac{\delta F}{\delta m}(\cdot,m,\cdot)\right\|_{C_{\mathcal{X},y}^{0+1}\left(\mathbb{T}_{\mathbb{G}};C_{\mathcal{X},x}^{k-1+\alpha}\left(\mathbb{T}_{\mathbb{G}}\right)\right)} d_1(m_1(t),m_2(t))\\
\leq & c_{F}C\left(\left\|D_{\mathcal{X}}\bar{u}(t_0,\cdot)\right\|_{L^{\infty}\left(\mathbb{T}_{\mathbb{G}}\right)}^{\frac{1}{2}} d_1\left(m_0^1,m_0^2\right)^{\frac{1}{2}}+d_1\left(m_0^1,m_0^2\right)\right).
\end{align*}
Similarly, from \ref{assum2}, we also have
$$
\left\|R_T\right\|_{C_{\mathcal{X}}^{k+\alpha}(\mathbb{T}_{\mathbb{G}})} \leq c_{G}C\left(\left\|D_{\mathcal{X}}\bar{u}(t_0,\cdot)\right\|_{L^{\infty}\left(\mathbb{T}_{\mathbb{G}}\right)}^{\frac{1}{2}} d_1\left(m_0^1,m_0^2\right)^{\frac{1}{2}}+d_1\left(m_0^1,m_0^2\right)\right).
$$
Hence, using Lemma \ref{Lem_LHJB wellposed regularity} and $\epsilon$-Cauchy inequality, we obtain
\begin{align*}
\sup_{t \in [t_0,T]}\|\bar{u}(t,\cdot)\|_{C_{\mathcal{X}}^{k+\alpha}(\mathbb{T}_{\mathbb{G}})} \leq & C\left(\left\|R_{T}\right\|_{C_{\mathcal{X}}^{k+\alpha}(\mathbb{T}_{\mathbb{G}})}+\sup_{t \in [t_0,T]}\left\|R_1(t,\cdot)\right\|_{C_{\mathcal{X}}^{k-1+\alpha}(\mathbb{T}_{\mathbb{G}})}\right)\\
\leq & C\left(\left\|D_{\mathcal{X}}\bar{u}(t_0,\cdot)\right\|_{L^{\infty}\left(\mathbb{T}_{\mathbb{G}}\right)}^{\frac{1}{2}} d_1\left(m_0^1,m_0^2\right)^{\frac{1}{2}}+d_1\left(m_0^1,m_0^2\right)\right)\\
\leq & C\left(\epsilon\left\|D_\mathcal{X}\bar{u}(t_0,\cdot)\right\|_{L^{\infty}\left(\mathbb{T}_{\mathbb{G}}\right)}+\left(1+\frac{1}{\epsilon}\right) d_1\left(m_0^1, m_0^2\right)\right),
\end{align*}
where the constants $C>0$ depend on $\mathbb{G}$, $\alpha$, $k$, $T$, $c_F$, $c_G$, $\sup_{t\in(t_0,T)}\|b(t,\cdot)\|_{C_{\mathcal{X}}^{k-1+\alpha}\left(\mathbb{T}_{\mathbb{G}};\mathbb{R}^{n_1}\right)}$ and $\|D_{\mathcal{X}}u_1\|_{L^{\infty}\left((t_0,T)\times\mathbb{T}_{\mathbb{G}}\right)}$ only. Since $H\in C^{\infty}\left(\mathbb{T}_{\mathbb{G}}\times\mathbb{R}^{n_1}\right)$, we note that
\begin{align*}
& \sup_{t\in(t_0,T)}\|b(t,\cdot)\|_{C_{\mathcal{X}}^{k-1+\alpha}\left(\mathbb{T}_{\mathbb{G}};\mathbb{R}^{n_1}\right)} \\
\leq & \sup_{t\in(t_0,T),s\in(0,1)}\left\|D_pH\left(\cdot,sD_\mathcal{X} u_1(t,\cdot)+(1-s) D_\mathcal{X} u_2(t,\cdot)\right)\right\|_{C_{\mathcal{X}}^{k-1+\alpha}\left(\mathbb{T}_{\mathbb{G}}\right)}<+\infty.
\end{align*}
Choose $\epsilon>0$ small enough, we therefore get
\begin{equation}\label{u1-u2_regularity}
\sup_{t \in[t_0, T]}\|\bar{u}(t,\cdot)\|_{C_{\mathcal{X}}^{k+\alpha}(\mathbb{T}_{\mathbb{G}})} \leq Cd_1\left(m_0^1, m_0^2\right).
\end{equation}
where the constant $C>0$ depends on $\mathbb{G}$, $\alpha$, $k$, $T$, $c_{F}$, $c_{G}$ and $H$ only. Putting the above inequality into \eqref{m_1-m_2}, we can find
$$
\sup_{t \in[t_0,T]} d_1\left(m_1(t), m_2(t)\right) \leq C d_1\left(m_0^1,m_0^2\right).
$$

Finally, it is evident that \eqref{U_Lip. in m} can be directly derived from \eqref{u1-u2_regularity}. We have completed the proof.
\end{proof}

\section{Linearized MFG system and differentiability of \texorpdfstring{$U$}{U}}\label{Sec_4}

In this section, we first study the linearized MFG system \eqref{linear MFG system} to derive Lemma \ref{Lem_relation of z and rho_0}. Subsequently, we provide the proof of Proposition \ref{Prop_U C^1 w.r.t. m}.

\subsection{Results for the linearized MFG system}
Let us start to work on obtaining some regularity estimates for the system \eqref{linear MFG system}. To do this, we consider a more general linearized system of the following form:
\begin{equation}\label{general linear MFG system}
\begin{cases}
-\partial_t z-\Delta_{\mathcal{X}} z+b \cdot D_{\mathcal{X}} z=\frac{\delta F}{\delta m}(x,m(t))(\rho(t))+f,& \text { in }[t_0, T] \times \mathbb{T}_{\mathbb{G}}, \\
\partial_t\rho-\Delta_\mathcal{X}\rho-\operatorname{div}_{\mathcal{X}}(\rho b) =\operatorname{div}_{\mathcal{X}}(m P D_{\mathcal{X}}z+\Upsilon),& \text { in }[t_0, T] \times \mathbb{T}_{\mathbb{G}}, \\
z(T, x)=\frac{\delta G}{\delta m}(x,m(T))(\rho(T))+g,\quad \rho(t_0)=\rho
_0,& \text { in }\mathbb{T}_{\mathbb{G}}.
\end{cases}
\end{equation}
We provide the definition of the solution to this system as follows.
\begin{Def}\label{Def_linear system sol.}
Let $k\in\{2,3,\ldots\}$. A couple $(z,\rho)$ is said to be a solution to the system \eqref{general linear MFG system} if
\begin{enumerate}[label=(\arabic*), leftmargin=*, itemindent=1.7em]
\item $z \in C_{\mathcal{X}}^{1,2}([t_0,T] \times \mathbb{T}_{\mathbb{G}})$ is a classical solution to the first linear equation;
\item $\rho \in C\left([t_0,T];C_\mathcal{X}^{-(k-1+\alpha)}(\mathbb{T}_{\mathbb{G}})\right)$ is a weak solution to the FPK equation in the sense of Definition \ref{Def_general FPK weak sol.}.
\end{enumerate}
\end{Def}
\begin{Rem}\label{Rem_upsilon=div(Upsilon)}
The second equation in system \eqref{general linear MFG system} is a special type of FPK equation \eqref{general FPK}, where the distribution $\upsilon$ takes the form
$$
\upsilon(t)=\operatorname{div}_{\mathcal{X}}(\Upsilon(t,\cdot)),\,t\in[0,T],
$$
for some integrable function $\Upsilon:[0,T] \times \mathbb{T}_{\mathbb{G}} \to \mathbb{R}^{n_1}$. Under this case, we can guarantee condition $\upsilon \in L^{1}\left([0,T];C_{\mathcal{X}}^{-k}\left([0,1)^n\right)\cap C_{\mathcal{X}}^{-(k+\alpha)}(\mathbb{T}_{\mathbb{G}})\right)$ by simply requiring $\Upsilon \in L^{1}\left([0,T] \times \mathbb{T}_{\mathbb{G}}\right)$ and $\int_{0}^{T}\int_{\partial [0,1)^n}|\Upsilon(t,x)|dS dt<+\infty$. Indeed, by integration by parts, we obtain
\begin{align*}
\left\|\upsilon\right\|_{L^{1}\left([0,T];C_{\mathcal{X}}^{-(k+\alpha)}\left(\mathbb{T}_{\mathbb{G}}\right)\right)} = & \int_{0}^{T}\sup_{\left\|w\right\|_{C_{\mathcal{X}}^{k+\alpha}\left(\mathbb{T}_{\mathbb{G}}\right)} \leq 1}\left\langle\operatorname{div}_{\mathcal{X}}(\Upsilon(t)),w\right\rangle dt \\
= & \int_{0}^{T}\left(\sup_{\left\|w\right\|_{C_{\mathcal{X}}^{k+\alpha}\left(\mathbb{T}_{\mathbb{G}}\right)} \leq 1}-\int_{\mathbb{T}_{\mathbb{G}}}\Upsilon(t,x) \cdot D_{\mathcal{X}}w(x)dx\right)dt \\
\leq & \int_{0}^{T}\int_{\mathbb{T}_{\mathbb{G}}}\left|\Upsilon(t,x)\right|dxdt=\left\|\Upsilon\right\|_{L^1\left([0,T] \times \mathbb{T}_{\mathbb{G}}\right)},
\end{align*}
where $w$ is the test function, and
\begin{align*}
\left\|\upsilon\right\|_{L^{1}\left([0,T];C_{\mathcal{X}}^{-k}\left([0,1)^n\right)\right)} 
\leq & \int_{0}^{T}\left(\int_{[0,1)^n}\left|\Upsilon(t,x)\right|dx+\int_{\partial [0,1)^n}|\Upsilon(t,x)|dS\right)dt.
\end{align*}
Particularly, if $\Upsilon(t)\in\mathcal{P}\left(\mathbb{T}_{\mathbb{G}}\right)$ for any $t\in[0,T]$, assume without loss of generality that $\Upsilon(t)$ is absolutely continuous with respect to the Lebesgue measure, then the above inequality can be written as
\begin{equation*}
\left\|\upsilon\right\|_{L^{1}\left([0,T];C_{\mathcal{X}}^{-k}\left([0,1)^n\right)\right)} \leq C\int_{0}^{T}\int_{[0,1)^n}\left|\Upsilon(t,x)\right|dxdt=C\left\|\Upsilon\right\|_{L^1\left([0,T] \times \mathbb{T}_{\mathbb{G}}\right)}.
\end{equation*}
Without ambiguity, we still denote $\int_{0}^{T}\int_{\mathbb{T}_{\mathbb{G}}}d\left|\Upsilon(t)\right|(x)dt=\left\|\Upsilon\right\|_{L^1\left([0,T] \times \mathbb{T}_{\mathbb{G}}\right)}$.
\end{Rem}
Now, we proceed to state the existence, uniqueness and regularity results for the system \eqref{general linear MFG system}.
\begin{Lem}\label{Lem_GL system regularity}
Let assumptions \ref{assum1}-\ref{assum2} hold and $t_0 \in[0, T]$. Assume that $b\in C_{\mathcal{X}}^{\frac{\alpha}{2},k-2+\alpha}\left([t_0, T] \times \mathbb{T}_{\mathbb{G}};\mathbb{R}^{n_1}\right)\cap B\left([t_0,T];C_{\mathcal{X}}^{k-1+\alpha}\left(\mathbb{T}_{\mathbb{G}};\mathbb{R}^{n_1}\right)\right)$, $m \in C^{\frac{1}{2}}([t_0,T];\mathcal{P}(\mathbb{T}_{\mathbb{G}}))$, $\rho_0 \in C_{\mathcal{X}}^{-(k-1)}\left([0,1)^n\right)\cap C_{\mathcal{X}}^{-(k-1+\alpha)}(\mathbb{T}_{\mathbb{G}})$, $P:[t_0,T] \times \mathbb{T}_{\mathbb{G}} \to \mathbb{R}^{n_1 \times n_1}$ is continuous which maps into the family of real symmetric matrices, $f\in C\left([t_0,T]\times\mathbb{T}_{\mathbb{G}}\right)\cap B\left([t_0,T];C_{\mathcal{X}}^{k-1+\alpha}\left(\mathbb{T}_{\mathbb{G}}\right)\right)$, $\Upsilon\in L^1\left([t_0,T]\times\mathbb{T}_{\mathbb{G}};\mathbb{R}^{n_1}\right)$ with $\int_{0}^{T}\int_{\partial [0,1)^n}|\Upsilon(t,x)|dS dt<+\infty$, $g\in C_{\mathcal{X}}^{k+\alpha}\left(\mathbb{T}_{\mathbb{G}}\right)$, and there is a constant $\bar{c}>0$ such that
\begin{equation}\label{Assump_linear system}
\begin{gathered}
d_1(m(t_1),m(t_2)) \leq \bar{c}|t_1-t_2|^{\frac{1}{2}},\, t_1,t_2 \in [t_0,T], \\
\bar{c}^{-1}I_{n_1 \times n_1} \leq P(t,x) \leq \bar{c} I_{n_1 \times n_1},\, (t,x) \in [t_0,T]\times\mathbb{T}_{\mathbb{G}}.
\end{gathered}
\end{equation}
Then, system \eqref{general linear MFG system} has a unique solution $(z,\rho)$ in the sense of Definition \ref{Def_linear system sol.}, satisfying
\begin{equation}\label{linear system_z regularity}
\sup _{t \in[t_0, T]}\|z(t, \cdot)\|_{C_{\mathcal{X}}^{k+\alpha}\left(\mathbb{T}_{\mathbb{G}}\right)} \leq CM,\,\sup _{\substack{t \neq t'\\t,t' \in [t_0,T-\epsilon]}} \frac{\left\|z\left(t^{\prime}, \cdot\right)-z(t, \cdot)\right\|_{C_{\mathcal{X}}^{k+\alpha}\left(\mathbb{T}_{\mathbb{G}}\right)}}{\left|t^{\prime}-t\right|^{\frac{1}{2}}} \leq \epsilon^{-\frac{1}{2}}CM
\end{equation}
for any $\epsilon\in(0,T-t_0)$, and
\begin{equation}\label{linear system_rho regularity}
\sup_{t \in \left[t_{0},T\right]}\|\rho(t)\|_{C_\mathcal{X}^{-(k-1+\alpha)}(\mathbb{T}_{\mathbb{G}})} + \sup _{\substack{t \neq t'\\t,t' \in [t_0,T]}} \frac{\left\|\rho\left(t^{\prime}\right)-\rho(t)\right\|_{C_\mathcal{X}^{-(k+\alpha)}(\mathbb{T}_{\mathbb{G}})}}{\left|t^{\prime}-t\right|^{\frac{1}{2}}} \leq CM,
\end{equation}
where the constants $C>0$ depend on $\mathbb{G}$, $\alpha$, $k$, $T$, $\sup_{t\in(t_0,T)}\|b(t,\cdot)\|_{C_{\mathcal{X}}^{k-1+\alpha}\left(\mathbb{T}_{\mathbb{G}};\mathbb{R}^{n_1}\right)}$, $c_F$, $c_G$ and $\bar{c}$ only, and $M$ is given by
\begin{align*}
M:= & \sup_{t \in \left(t_{0},T\right)}\|f(t,\cdot)\|_{C_{\mathcal{X}}^{k-1+\alpha}\left(\mathbb{T}_{\mathbb{G}}\right)}+\|\Upsilon\|_{L^1\left([t_0,T]\times\mathbb{T}_{\mathbb{G}};\mathbb{R}^{n_1}\right)}
+\left\|g\right\|_{C_{\mathcal{X}}^{k+\alpha}\left(\mathbb{T}_{\mathbb{G}}\right)}+\left\|\rho_{0}\right\|_{C_{\mathcal{X}}^{-(k-1+\alpha)}(\mathbb{T}_{\mathbb{G}})}.
\end{align*}

Moreover, the solution is stable: for any $i\in\mathbb{N}$, let $\left(z^i,\rho^i\right)$ be the solution to system \eqref{general linear MFG system} with $b=b^i$, $f=f^i$, $\Upsilon=\Upsilon^i$ and $g=g^i$. If $b^{i} \to b$ in $C_{\mathcal{X}}^{\frac{\alpha}{2},k-2+\alpha}\left([t_0, T] \times \mathbb{T}_{\mathbb{G}};\mathbb{R}^{n_1}\right)\cap B\left([t_0,T];C_{\mathcal{X}}^{k-1+\alpha}\left(\mathbb{T}_{\mathbb{G}};\mathbb{R}^{n_1}\right)\right)$, $f^{i} \to f$ in $B\left([t_0,T];C_{\mathcal{X}}^{k-1+\alpha}\left(\mathbb{T}_{\mathbb{G}}\right)\right)$, $\Upsilon^{i} \to \Upsilon$ in $L^1\left([t_0,T]\times\mathbb{T}_{\mathbb{G}};\mathbb{R}^{n_1}\right)$ and $g^{i} \to g$ in $C_{\mathcal{X}}^{k+\alpha}\left(\mathbb{T}_{\mathbb{G}}\right)$ as $i\to +\infty$, then $\left(z^i,\rho^i\right) \to (z,\rho)$ in $C_{\mathcal{X}}^{1,2}([t_0,T] \times \mathbb{T}_{\mathbb{G}})\times C\left([t_0,T];C_{\mathcal{X}}^{-(k-1+\alpha)}(\mathbb{T}_{\mathbb{G}})\right)$ as $i\to +\infty$.
\end{Lem}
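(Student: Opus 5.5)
We will follow the strategy of \cite[Lemma 3.3.1]{19CDLL}: existence by a Leray--Schauder (Schaefer) fixed point argument on the $\rho$ component, uniqueness and the a priori bound from a duality/monotonicity identity, and the time regularity and stability from the linear results Lemmas \ref{Lem_LHJB wellposed regularity} and \ref{Lem_general FPK wellposed regularity}.

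\emph{Fixed point map.} Let $X:=C\left([t_0,T];C_\mathcal{X}^{-(k-1+\alpha)}(\mathbb{T}_{\mathbb{G}})\right)$. Given $\rho\in X$ and $\lambda\in[0,1]$, let $z$ solve the first equation of \eqref{general linear MFG system} with right-hand side $\lambda\frac{\delta F}{\delta m}(x,m(t))(\rho(t))+f$ and terminal datum $\lambda\frac{\delta G}{\delta m}(x,m(T))(\rho(T))+g$. Assumptions \ref{assum1}--\ref{assum2} put these data in $B([t_0,T];C^{k-1+\alpha}_\mathcal{X})$ and $C^{k+\alpha}_\mathcal{X}$, with norms bounded by $c_F,c_G$ times $\sup_t\|\rho(t)\|_{C^{-(k-1+\alpha)}_\mathcal{X}}$. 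Lemma \ref{Lem_LHJB wellposed regularity} then gives $z$ with $\sup_t\|z(t)\|_{C^{k+\alpha}_\mathcal{X}}$ controlled. Next let $\tilde\rho$ be the weak solution (Lemma \ref{Lem_general FPK wellposed regularity}, with $k-1$ in place of $k$, and Remark \ref{Rem_upsilon=div(Upsilon)}) of the FPK equation with source $\operatorname{div}_\mathcal{X}(\lambda mPD_\mathcal{X}z+\Upsilon)$ and initial datum $\lambda\rho_0$, and set $\Phi_\lambda(\rho):=\tilde\rho$. The measure-valued vector field $mPD_\mathcal{X}z$ has bounded total variation, so it is an admissible source.

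Compactness of $\Phi_\lambda$ comes from a time H\"older estimate for $\tilde\rho$ in the weaker norm $C^{-(k+\alpha)}_\mathcal{X}$. Test the weak formulation \eqref{general FPK_weak formulation} with the dual solution, and use \eqref{LHJB_HSE 2} (terminal data in $C^{k+\alpha}_\mathcal{X}\subset C^{k-1+1+\alpha}_\mathcal{X}$). This yields the $\frac12$-H\"older bound in \eqref{linear system_rho regularity}. The space bound is in the stronger norm $C^{-(k-1+\alpha)}_\mathcal{X}$, and bounded sets of $C^{-(k-1+\alpha)}_\mathcal{X}$ are relatively compact in the dual norm for a slightly smaller exponent. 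Arzel\`a--Ascoli then gives compactness, provided we run the fixed point in $C([t_0,T];C^{-(k-1+\alpha')}_\mathcal{X})$ with $\alpha'<\alpha$ and recover the sharp space afterwards. Continuity of $\Phi_\lambda$ follows from the linear estimates. For $\lambda=0$ the map is constant.

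\emph{A priori bound (main obstacle).} We must bound uniformly in $\lambda$ every fixed point $\rho=\Phi_\lambda(\rho)$. Pair $\rho$ with $z$ through the weak formulation; this is legitimate since $z$ is exactly a test function as in \eqref{dual FPK}. This gives
\[
\langle\rho(T),z(T)\rangle-\langle\lambda\rho_0,z(t_0)\rangle=-\int_{t_0}^T\langle\rho,\lambda\tfrac{\delta F}{\delta m}(\rho)+f\rangle dt-\lambda\int_{t_0}^T\!\!\int PD_\mathcal{X}z\cdot D_\mathcal{X}z\,dm\,dt-\int_{t_0}^T\!\!\int\Upsilon\cdot D_\mathcal{X}z .
\]
By \eqref{property_F} and \eqref{property_G}, the terms $\lambda\langle\rho,\frac{\delta F}{\delta m}(\rho)\rangle$ and $\lambda\langle\rho(T),\frac{\delta G}{\delta m}(\rho(T))\rangle$ are nonnegative. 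Together with $P\ge\bar c^{-1}$ this yields
\[
\lambda\int\!\!\int|D_\mathcal{X}z|^2dm\,dt\le CM\Big(\sup_t\|z\|_{C^{k+\alpha}_\mathcal{X}}+\sup_t\|\rho\|_{C^{-(k-1+\alpha)}_\mathcal{X}}\Big).
\]
Next estimate $\rho$ by duality against a solution of \eqref{dual FPK}. The source term $\lambda\int\!\!\int PD_\mathcal{X}z\cdot D_\mathcal{X}w\,dm$ is handled by Cauchy--Schwarz against $m$, giving
\[
\sup\|\rho\|\le C\Big(M+\big(\lambda\textstyle\int\!\!\int|D_\mathcal{X}z|^2dm\big)^{1/2}\Big).
\]
From the $z$ equation, $\sup\|z\|\le C(M+\sup\|\rho\|)$. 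Combining the three inequalities with Young's inequality closes the bound to $CM$. The delicate point is that the dual test functions must be bounded in $C^1_\mathcal{X}$ uniformly in $C^{k-1+\alpha}_\mathcal{X}$ data, which is what Lemma \ref{Lem_LHJB wellposed regularity} provides. Schaefer's theorem then gives a fixed point for $\lambda=1$, and the bounds above give \eqref{linear system_z regularity}--\eqref{linear system_rho regularity}; the $z$ time estimate is \eqref{LHJB_HSE 1}.

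\emph{Uniqueness and stability.} Uniqueness follows from the same identity with $M=0$: it forces $\int\!\!\int|D_\mathcal{X}z|^2dm=0$, hence $\rho=0$ and then $z=0$. For stability, the differences $(z^i-z,\rho^i-\rho)$ solve a system of the form \eqref{general linear MFG system} whose data are $f^i-f+(b-b^i)\cdot D_\mathcal{X}z$, $g^i-g$, $\Upsilon^i-\Upsilon+\rho(b^i-b)$ and zero initial datum. The term $\rho(b^i-b)$ is a distribution, not an $L^1$ function, so we instead place it directly in the FPK source and use the stability part of Lemma \ref{Lem_general FPK wellposed regularity}. With the uniform estimate this gives convergence in $X$. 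Convergence of $z^i$ in $C^{1,2}_\mathcal{X}$ follows from \eqref{LHJB_USE} and the equation.
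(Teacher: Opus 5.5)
Your overall architecture matches the paper's: a Leray--Schauder argument on the $\rho$ component, the energy identity from \eqref{property_F}--\eqref{property_G}, the FPK bound combined with Cauchy--Schwarz in $m$ and Young's inequality, then uniqueness and stability. Your homotopy to a constant map is an equivalent variant of the paper's $\rho=\sigma\Psi(\rho)$, and your a priori bound, uniqueness argument (forcing $\int\!\!\int|D_{\mathcal{X}}z|^2dm\,dt=0$) and stability sketch are fine.

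\textbf{Gap: compactness of $\Phi_\lambda$.} The step that fails as written is the compactness of $\Phi_\lambda$. For $\alpha'<\alpha$ one has $\|\phi\|_{C_{\mathcal{X}}^{k-1+\alpha'}}\le C\|\phi\|_{C_{\mathcal{X}}^{k-1+\alpha}}$. By duality, $\|\psi\|_{C_{\mathcal{X}}^{-(k-1+\alpha)}}\le C\|\psi\|_{C_{\mathcal{X}}^{-(k-1+\alpha')}}$. So the space $C([t_0,T];C_{\mathcal{X}}^{-(k-1+\alpha')})$ in which you propose to run the fixed point carries the \emph{stronger} norm. A bound on $\tilde\rho$ in $C_{\mathcal{X}}^{-(k-1+\alpha)}$, together with time regularity in $C_{\mathcal{X}}^{-(k+\alpha)}$, does not even control the $C_{\mathcal{X}}^{-(k-1+\alpha')}$ norm, let alone give relative compactness there.

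The compact embedding goes the other way, into $C_{\mathcal{X}}^{-(k-1+\beta)}$ with $\beta>\alpha$. On that weaker space, however, your map is not defined. By \ref{assum1}--\ref{assum2}, $\frac{\delta F}{\delta m}(x,m,\cdot)$ and $\frac{\delta G}{\delta m}(x,m,\cdot)$ are only $C_{\mathcal{X}}^{k-1+\alpha}$ in $y$. So $\frac{\delta F}{\delta m}(x,m(t))(\rho(t))$ has no meaning, and a fortiori no continuity, for $\rho(t)\in C_{\mathcal{X}}^{-(k-1+\beta)}$. Neither choice of exponent works, and ``recover the sharp space afterwards'' is not supported by anything in your sketch.

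\textbf{How the paper avoids this.} The paper takes the compactness from $z$ rather than from $\tilde\rho$, so no exponent is lost.
\begin{itemize}
\item For $\rho^i$ bounded in $X=C([t_0,T];C_{\mathcal{X}}^{-(k-1+\alpha)}(\mathbb{T}_{\mathbb{G}}))$, the $z^i$ are bounded in $B([t_0,T];C_{\mathcal{X}}^{k+\alpha})$.
\item By \eqref{LHJB_HSE 1}, the $z^i$ are $\frac12$-H\"older in time with values in $C_{\mathcal{X}}^{k+\alpha}$ on $[t_0,T-\epsilon]$.
\item Arzel\`a--Ascoli then gives uniform convergence of $D_{\mathcal{X}}z^i$ on $[t_0,T-\epsilon]$ along a subsequence, while the strip $[T-\epsilon,T]$ contributes only $O(\epsilon)$.
\item Hence $\operatorname{div}_{\mathcal{X}}(mPD_{\mathcal{X}}z^i)$ converges in $L^1([t_0,T];C_{\mathcal{X}}^{-(k-1+\alpha)})$.
\item The linear FPK estimate and stability of Lemma \ref{Lem_general FPK wellposed regularity} (with $k-1$ in place of $k$) then give convergence of $\tilde\rho^i$ in $X$ itself.
\end{itemize}

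\textbf{Repairing your own route instead.} You would have to do all of the following, none of which is in your proposal:
\begin{itemize}
\item run the fixed point in $C_{\mathcal{X}}^{-(k-1+\alpha')}$ with $\alpha'<\alpha$;
\item bound $\tilde\rho$ in an even stronger norm $C_{\mathcal{X}}^{-(k-1+\alpha'')}$ with $\alpha''<\alpha'$, using the hypothesis $\rho_0\in C_{\mathcal{X}}^{-(k-1)}([0,1)^n)$;
\item prove an interpolation (Ehrling-type) inequality in this non-isotropic dual scale, via group mollification, to turn time regularity in $C_{\mathcal{X}}^{-(k+\alpha)}$ into equicontinuity in $C_{\mathcal{X}}^{-(k-1+\alpha')}$;
\item redo the Leray--Schauder a priori bound in the $\alpha'$-norm.
\end{itemize}
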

\begin{proof}
Assume without loss of generality that $t_{0}=0$. We use the Leray-Schauder fixed point theorem to prove the existence, where the crucial point is to show the estimates \eqref{linear system_z regularity} and \eqref{linear system_rho regularity}.

\emph{Step 1: Definition of the map $\Psi$.} Set $X:=C\left([t_0,T];C_{\mathcal{X}}^{-(k-1+\alpha)}(\mathbb{T}_{\mathbb{G}})\right)$. For any $\rho \in X$, we define $\Psi(\rho)$ in the following way.

First, it can be known from assumptions \ref{assum1} and \ref{assum2} that $\frac{\delta F}{\delta m}(x,m(t))(\rho(t)) \in C\left([t_0,T]\times\mathbb{T}_{\mathbb{G}}\right)\cap B\left([t_0,T];C_{\mathcal{X}}^{k-1+\alpha}\left(\mathbb{T}_{\mathbb{G}}\right)\right)$ and $\frac{\delta G}{\delta m}(x, m(T))(\rho(T)) \in C_{\mathcal{X}}^{k+\alpha}\left(\mathbb{T}_{\mathbb{G}}\right)$. By Lemma \ref{Lem_LHJB well-posedness}, there exists a unique solution $z \in C_{\mathcal{X}}^{1,2} \left([t_0, T] \times \mathbb{T}_{\mathbb{G}}\right)$ to the equation
\begin{equation}\label{linear system_z}
\begin{cases}
-\partial_{t} z-\Delta_{\mathcal{X}}z+b\cdot D_{\mathcal{X}}z=\frac{\delta F}{\delta m}(x,m(t))(\rho(t))+f, & \text { in }[t_0,T] \times \mathbb{T}_{\mathbb{G}}, \\
z(T)=\frac{\delta G}{\delta m}(x, m(T))(\rho(T))+g, & \text { in } \mathbb{T}_{\mathbb{G}},
\end{cases}
\end{equation}
and it follows from Lemma \ref{Lem_LHJB wellposed regularity} that
\begin{align}\label{z_regularity 1}
& \sup _{t \in[t_0, T]}\|z(t, \cdot)\|_{C_{\mathcal{X}}^{k+\alpha}\left(\mathbb{T}_{\mathbb{G}}\right)} \notag\\
\leq & C\left(\left\|\frac{\delta G}{\delta m}(x, m(T))(\rho(T))\right\|_{C_{\mathcal{X}}^{k+\alpha}\left(\mathbb{T}_{\mathbb{G}}\right)}+\left\|g\right\|_{C_{\mathcal{X}}^{k+\alpha}\left(\mathbb{T}_{\mathbb{G}}\right)}\right. \notag\\
& \left.+\sup_{t \in(t_0, T)}\left\|\frac{\delta F}{\delta m}(\cdot,m(t))(\rho(t))\right\|_{C_{\mathcal{X}}^{k-1+\alpha}\left(\mathbb{T}_{\mathbb{G}}\right)}+\sup_{t \in(t_0,T)}\|f(t,\cdot)\|_{C_{\mathcal{X}}^{k-1+\alpha}\left(\mathbb{T}_{\mathbb{G}}\right)}\right) \notag\\
\leq & C\left(\left\|g\right\|_{C_{\mathcal{X}}^{k+\alpha}\left(\mathbb{T}_{\mathbb{G}}\right)} +\sup_{t \in(t_0,T]}\|\rho(t)\|_{C_{\mathcal{X}}^{-(k-1+\alpha)}(\mathbb{T}_{\mathbb{G}})} +\sup_{t \in(t_0,T)}\|f(t,\cdot)\|_{C_{\mathcal{X}}^{k-1+\alpha}\left(\mathbb{T}_{\mathbb{G}}\right)}\right) \notag\\
\leq & C\left(M + \sup _{t \in(t_0,T]}\|\rho(t)\|_{C_{\mathcal{X}}^{-(k-1+\alpha)}(\mathbb{T}_{\mathbb{G}})}\right),
\end{align}
\begin{align}\label{z_regularity 2}
& \sup _{\substack{t \neq t'\\t,t' \in [t_0,T-\epsilon]}} \frac{\left\|z\left(t^{\prime}, \cdot\right)-z(t, \cdot)\right\|_{C_{\mathcal{X}}^{k+\alpha}\left(\mathbb{T}_{\mathbb{G}}\right)}}{\left|t^{\prime}-t\right|^{\frac{1}{2}}} \notag\\
\leq & C\left(\epsilon^{-\frac{1}{2}}\bigg(M+\|\rho(T)\|_{C_{\mathcal{X}}^{-(k-1+\alpha)}(\mathbb{T}_{\mathbb{G}})}\right) +\sup_{t \in(t_0,T)}\|\rho(t)\|_{C_{\mathcal{X}}^{-(k-1+\alpha)}(\mathbb{T}_{\mathbb{G}})}\bigg)
\end{align}
for any $\epsilon\in(0,T-t_0)$, where the constant $C>0$ depends on $\mathbb{G}$, $\alpha$, $k$, $T$, $c_F$, $c_G$ and $\sup_{t\in(t_0,T)}\|b(t,\cdot)\|_{C_{\mathcal{X}}^{k-1+\alpha}\left(\mathbb{T}_{\mathbb{G}};\mathbb{R}^{n_1}\right)}$ only.

Next we define $\Psi(\rho):=\tilde{\rho}$ as the unique weak solution to the FPK equation
\begin{equation}\label{linear system_tilde rho}
\begin{cases}
\partial_t\tilde{\rho}-\Delta_\mathcal{X}\tilde{\rho}-\operatorname{div}_{\mathcal{X}}(\tilde{\rho} b)=\operatorname{div}_{\mathcal{X}}(m P D_{\mathcal{X}}z+\Upsilon), & \text{ in } [t_0,T] \times \mathbb{T}_{\mathbb{G}},\\
\tilde{\rho}(t_0)=\rho_{0}, & \text{ in } \mathbb{T}_{\mathbb{G}}.
\end{cases}
\end{equation}
It follows from Lemma \ref{Lem_general FPK wellposed regularity} that $\tilde{\rho} \in X$. We shall prove that the map $\Psi$ is compact and continuous.

As for the compactness, let $\{\rho^{i}\}_{i=1}^{+\infty} \subset X$ be a sequence with
$$
\sup_{t \in [t_0,T]}\left\|\rho^{i}\right\|_{C_{\mathcal{X}}^{-(k-1+\alpha)}(\mathbb{T}_{\mathbb{G}})} \leq C<+\infty
$$
for a certain constant $C>0$. For each $i$, we consider the corresponding solutions $z^{i}$ and $\tilde{\rho}^{i}$ to the equations \eqref{linear system_z} and \eqref{linear system_tilde rho} respectively. It is known from \eqref{z_regularity 1} and \eqref{z_regularity 2} that for any $\epsilon\in(0,T-t_0)$, $\left\|z^{i}\right\|_{B\left([t_0,T];C_{\mathcal{X}}^{k+\alpha}(\mathbb{T}_{\mathbb{G}})\right)}$ and $\left\|z^{i}\right\|_{C^{\frac{1}{2}}\left([t_0,T-\epsilon];C_{\mathcal{X}}^{k+\alpha}(\mathbb{T}_{\mathbb{G}})\right)}$ are uniformly bounded. We use the Arzel\`{a}-Ascoli theorem to obtain that there exists a function $w$ such that $z^{i} \to w$ up to a subsequence uniformly at least in $C\left([t_0,T-\epsilon];C_{\mathcal{X}}^{k}(\mathbb{T}_{\mathbb{G}})\right)$, and for any $t \in (T-\epsilon,T]$ and multi-index $I$ with $|I|\leq k$, there exists a subsequence $i_{j}(t)\to+\infty$, such that $X_Iz^{i_{j}(t)}(t,\cdot)\to X_Iw(t,\cdot)$ uniformly in $C(\mathbb{T}_{\mathbb{G}})$. Then, 
we can obtain
\begin{align*}
& \left\|\operatorname{div}_{\mathcal{X}}\left(m P \left(D_{\mathcal{X}}z^{i_{j}(t)}-D_{\mathcal{X}}w\right)\right)\right\|_{L^1\left([t_0,T];C_{\mathcal{X}}^{-k}\left([0,1)^n\right)\cap C_{\mathcal{X}}^{-(k-1+\alpha)}(\mathbb{T}_{\mathbb{G}})\right)} \\
\leq & \int_{t_0}^{T-\epsilon}\int_{\mathbb{T}_{\mathbb{G}}}\left|P\left(D_{\mathcal{X}}z^{i_{j}(t)}-D_{\mathcal{X}}w\right)\right|dm(t)(x)dt +\int_{T-\epsilon}^{T}\int_{\mathbb{T}_{\mathbb{G}}}\left|P\left(D_{\mathcal{X}}z^{i_{j}(t)}-D_{\mathcal{X}}w\right)\right|dm(t)(x)dt \\
\leq & C\left(\left\|D_{\mathcal{X}}z^{i_{j}(t)}-D_{\mathcal{X}}w\right\|_ {L^\infty\left([t_0,T-\epsilon]\times\mathbb{T}_{\mathbb{G}}\right)}+\epsilon\right),
\end{align*}
where $C>0$ is independent of $j$ and $\epsilon$. Let $j \to +\infty$ and $\epsilon\to 0$, we have
$$
\operatorname{div}_{\mathcal{X}}\left(mP D_{\mathcal{X}}z^{i_{j}(T)}+\Upsilon\right) \to \operatorname{div}_{\mathcal{X}}\left(mP D_{\mathcal{X}}w+\Upsilon\right)
$$
in $L^1\left([t_0,T];C_{\mathcal{X}}^{-(k-1+\alpha)}(\mathbb{T}_{\mathbb{G}})\right)$. Thus, the stability result proved in Lemma \ref{Lem_general FPK wellposed regularity} shows that $\tilde{\rho}^{i_{j}(T)} \to \tilde{\varrho}$ in $X$ as $j\to+\infty$, where $\tilde{\varrho}$ is the solution to equation \eqref{linear system_tilde rho} related to $w$. This proves that $\Psi$ is compact on $X$.

The continuity of $\Psi$ can be obtained by using the similar method as the proof of the stability in Proposition \ref{Prop_MFG system wellposed regularity}, where we only need to note that for any $\rho^{i}\to\rho$ in $X$ as $i\to+\infty$ and $t\in[t_0,T]$,
\begin{align*}
& \left\|\frac{\delta G}{\delta m}(x, m(t))(\rho^{i}(t))-\frac{\delta G}{\delta m}(x, m(t))(\rho^{j}(t))\right\|_{C_{\mathcal{X}}^{k+\alpha}\left(\mathbb{T}_{\mathbb{G}}\right)} \notag\\
& +\sup_{\tau\in(t_0,t)}\left\|\frac{\delta F}{\delta m}(x,m(\tau))(\rho^{i}(\tau))-\frac{\delta F}{\delta m}(x,m(\tau))(\rho^{j}(\tau))\right\|_{C_{\mathcal{X}}^{k-1+\alpha}\left(\mathbb{T}_{\mathbb{G}}\right)} \\
\leq & \sup_{m \in \mathcal{P}(\mathbb{T}_{\mathbb{G}})}\left\|\frac{\delta G}{\delta m}(\cdot,m,\cdot)\right\|_{C_{\mathcal{X},y}^{k-1+\alpha}\left(\mathbb{T}_{\mathbb{G}};C_{\mathcal{X},x}^{k+\alpha}\left(\mathbb{T}_{\mathbb{G}}\right)\right)} \left\|\rho^{i}(t)-\rho^{j}(t)\right\|_{C_{\mathcal{X}}^{-(k-1+\alpha)}(\mathbb{T}_{\mathbb{G}})} \\
& +\sup_{m \in \mathcal{P}(\mathbb{T}_{\mathbb{G}})}\left\|\frac{\delta F}{\delta m}(\cdot,m,\cdot)\right\|_{C_{\mathcal{X},y}^{k-1+\alpha}\left(\mathbb{T}_{\mathbb{G}};C_{\mathcal{X},x}^{k-1+\alpha}\left(\mathbb{T}_{\mathbb{G}}\right)\right)} \sup_{\tau\in(t_0,t)}\left\|\rho^{i}(\tau)-\rho^{j}(\tau)\right\|_{C_{\mathcal{X}}^{-(k-1+\alpha)}(\mathbb{T}_{\mathbb{G}})}\to0
\end{align*}
as $i,j\to+\infty$.

It remains to check that the Leray-Schauder fixed point theorem holds true. Fix any $(\rho,\sigma) \in X \times [0,1]$ such that $\rho=\sigma \Psi(\rho)$ and let $z$ be the solution to the equation \eqref{linear system_z}. It can be implied that $(z,\rho)$ satisfies
\begin{equation}\label{linear system_LST}
\begin{cases}
-\partial_t z-\Delta_{\mathcal{X}} z+b \cdot D_{\mathcal{X}} z=\frac{\delta F}{\delta m}(x,m(t))(\rho(t))+f,& \text {in }[t_0, T] \times \mathbb{T}_{\mathbb{G}}, \\
\partial_t\rho-\Delta_\mathcal{X}\rho-\operatorname{div}_{\mathcal{X}}(\rho b)=\sigma\operatorname{div}_{\mathcal{X}}(m P D_{\mathcal{X}}z+\Upsilon),& \text {in }[t_0, T] \times \mathbb{T}_{\mathbb{G}}, \\
z(T, x)=\frac{\delta G}{\delta m}(x,m(T))(\rho(T))+g,\quad \rho(t_0)=\sigma\rho_0,& \text {in }\mathbb{T}_{\mathbb{G}}.
\end{cases}
\end{equation}
We only need to show that $\rho$ satisfies \eqref{linear system_rho regularity}, and this will be proved in the next step.

\emph{Step 2: Estimate of $\rho$.} According to \eqref{general FPK_weak formulation}, we have
\begin{align*}
& \left\langle\rho(T),\frac{\delta G}{\delta m}(\cdot, m(T))(\rho(T))+g(\cdot)\right\rangle +\int_{t_0}^{T} \left\langle \rho(s),\frac{\delta F}{\delta m}(\cdot,m(s))(\rho(s))+f(s,\cdot) \right\rangle ds \\
= & \left\langle\sigma\rho_0,z(t_0,\cdot)\right\rangle -\int_{t_0}^{T}\int_{\mathbb{T}_{\mathbb{G}}}\sigma \left(P D_{\mathcal{X}}z\right)(s,x)\cdot D_{\mathcal{X}}z(s,x)dm(s)(x) ds \\
& -\int_{t_0}^{T}\int_{\mathbb{T}_{\mathbb{G}}}\sigma\Upsilon(s,x)\cdot D_{\mathcal{X}}z(s,x)dxds
\end{align*}
It follows from the properties of $F$ and $G$ in \eqref{property_F} and \eqref{property_G} that
\begin{align}\label{energy estimate}
& \int_{t_0}^{T}\int_{\mathbb{T}_{\mathbb{G}}}\sigma \left(P D_{\mathcal{X}}z\right)(s,x)\cdot D_{\mathcal{X}}z(s,x)dm(s)(x) ds \notag\\
\leq & \sup_{t \in (t_0,T]}\left\|\rho(t)\right\|_{C_{\mathcal{X}}^{-(k-1+\alpha)}(\mathbb{T}_{\mathbb{G}})} \left(\left\|g\right\|_{C_{\mathcal{X}}^{k-1+\alpha}(\mathbb{T}_{\mathbb{G}})} +T\sup_{t \in (t_0,T)}\left\|f(t,\cdot)\right\|_{C_{\mathcal{X}}^{k-1+\alpha}(\mathbb{T}_{\mathbb{G}})}\right) \notag\\
& + \sigma\sup_{t \in [t_0,T)}\left\|z(t,\cdot)\right\|_{C_{\mathcal{X}}^{k-1+\alpha}(\mathbb{T}_{\mathbb{G}})} \left(\left\|\rho_0\right\|_{C_{\mathcal{X}}^{-(k-1+\alpha)}(\mathbb{T}_{\mathbb{G}})} +\|\Upsilon\|_{L^1\left([t_0,T]\times\mathbb{T}_{\mathbb{G}};\mathbb{R}^{n_1}\right)}\right) \notag\\
\leq & CM\left(\sup_{t \in (t_0,T]}\left\|\rho(t)\right\|_{C_{\mathcal{X}}^{-(k-1+\alpha)}(\mathbb{T}_{\mathbb{G}})} + \sigma\sup_{t \in [t_0,T)}\left\|z(t,\cdot)\right\|_{C_{\mathcal{X}}^{k-1+\alpha}(\mathbb{T}_{\mathbb{G}})}\right).
\end{align}
On the other hand, using Lemma \ref{Lem_general FPK wellposed regularity}
, we have
\begin{align}\label{rho_space regularity 1}
& \sup_{t \in [t_0,T]}\left\|\rho(t)\right\|_{C_{\mathcal{X}}^{-(k-1+\alpha)}(\mathbb{T}_{\mathbb{G}})} \notag\\
\leq & \sigma C\left(\left\|\rho_0\right\|_{C_{\mathcal{X}}^{-(k-1+\alpha)}(\mathbb{T}_{\mathbb{G}})} +\int_{t_0}^{T}\int_{\mathbb{T}_{\mathbb{G}}} \left|P D_{\mathcal{X}}z\right|dm(s)(x)ds +\|\Upsilon\|_{L^1\left([t_0,T]\times\mathbb{T}_{\mathbb{G}};\mathbb{R}^{n_1}\right)}\right),
\end{align}
where $C>0$ depends on $\mathbb{G}$, $\alpha$, $k$, $T$ and $\sup_{t\in(t_0,T)}\|b(t,\cdot)\|_{C_{\mathcal{X}}^{k-1+\alpha}\left(\mathbb{T}_{\mathbb{G}};\mathbb{R}^{n_1}\right)}$ only.
Since $P$ is a real symmetric matrix and satisfies \eqref{Assump_linear system}, we use H\"{o}lder's inequality and \eqref{energy estimate} to get
\begin{align*}
& \sigma\int_{t_0}^{T}\int_{\mathbb{T}_{\mathbb{G}}} \left|P D_{\mathcal{X}}z\right|dm(s)(x)ds \\
= & \sigma\int_{t_0}^{T}\int_{\mathbb{T}_{\mathbb{G}}} \left(D_{\mathcal{X}}z\cdot\left(P^2 D_{\mathcal{X}}z\right)\right)^{\frac{1}{2}}dm(s)(x)ds \\
\leq & \sigma\int_{t_0}^{T}\int_{\mathbb{T}_{\mathbb{G}}} \bar{c}\left|D_{\mathcal{X}}z\right|dm(s)(x)ds \\
\leq & \sigma\int_{t_0}^{T}\int_{\mathbb{T}_{\mathbb{G}}} \bar{c}^{\frac{3}{2}}\left(D_{\mathcal{X}}z\cdot\left(P D_{\mathcal{X}}z\right)\right)^{\frac{1}{2}}dm(s)(x)ds \\
\leq & \sigma\left(\bar{c}^3\int_{t_0}^{T}\int_{\mathbb{T}_{\mathbb{G}}} D_{\mathcal{X}}z\cdot\left(P D_{\mathcal{X}}z\right)dm(s)(x)ds\right)^\frac{1}{2} \left(\int_{t_0}^{T}\int_{\mathbb{T}_{\mathbb{G}}} dm(s)(x)ds\right)^\frac{1}{2} \\
\leq & CM^{\frac{1}{2}} \left(\sup_{t \in (t_0,T]}\left\|\rho(t)\right\|_{C_{\mathcal{X}}^{-(k-1+\alpha)}(\mathbb{T}_{\mathbb{G}})}^{\frac{1}{2}} + \sup_{t \in [t_0,T)}\left\|z(t,\cdot)\right\|_{C_{\mathcal{X}}^{k-1+\alpha}(\mathbb{T}_{\mathbb{G}})}^{\frac{1}{2}}\right).
\end{align*}
Putting the above inequality into \eqref{rho_space regularity 1}, one has
\begin{align*}
& \sup_{t \in [t_0,T]}\left\|\rho(t)\right\|_{C_{\mathcal{X}}^{-(k-1+\alpha)}(\mathbb{T}_{\mathbb{G}})} \\
\leq & C\left(M^{\frac{1}{2}} \left(\sup_{t \in (t_0,T]}\left\|\rho(t)\right\|_{C_{\mathcal{X}}^{-(k-1+\alpha)}(\mathbb{T}_{\mathbb{G}})}^{\frac{1}{2}} + \sup_{t \in [t_0,T)}\left\|z(t,\cdot)\right\|_{C_{\mathcal{X}}^{k-1+\alpha}(\mathbb{T}_{\mathbb{G}})}^{\frac{1}{2}}\right)+M\right).
\end{align*}
Using $\epsilon$-Cauchy inequality with $\varepsilon$ sufficiently small, we get
\begin{equation*}
\sup_{t \in [t_0,T]}\left\|\rho(t)\right\|_{C_{\mathcal{X}}^{-(k-1+\alpha)}(\mathbb{T}_{\mathbb{G}})} \leq C\left(M^\frac{1}{2} \sup_{t \in [t_0,T)}\left\|z(t,\cdot)\right\|_{C_{\mathcal{X}}^{k-1+\alpha}(\mathbb{T}_{\mathbb{G}})}^{\frac{1}{2}}+M\right).
\end{equation*}
Combining this with the estimate of $z$ in \eqref{z_regularity 1} and using $\epsilon$-Cauchy inequality again, we finally obtain
\begin{equation}\label{rho_space regularity 2}
\sup_{t \in [t_0,T]}\left\|\rho(t)\right\|_{C_{\mathcal{X}}^{-(k-1+\alpha)}(\mathbb{T}_{\mathbb{G}})} \leq CM,
\end{equation}
where $C>0$ depends on $\mathbb{G}$, $\alpha$, $k$, $T$, $\sup_{t\in(t_0,T)}\|b(t,\cdot)\|_{C_{\mathcal{X}}^{k-1+\alpha}\left(\mathbb{T}_{\mathbb{G}};\mathbb{R}^{n_1}\right)}$, $c_F$, $c_G$ and $\bar{c}$ only. This also yields the estimates for $z$, i.e. \eqref{linear system_z regularity}. Thus we have completed the proof of existence.

We now show the time regularity of $\rho$, which can also be obtained by duality. Fix any $t,t'\in[t_0,T]$ with $t<t'$, and choose any $\xi\in C_\mathcal{X}^{k+\alpha}(\mathbb{T}_{\mathbb{G}})$ with $\left\|\xi\right\|_{C_{\mathcal{X}}^{k+\alpha}\left(\mathbb{T}_{\mathbb{G}}\right)}\leq 1$. Let $z$ be the solution to the equation
\begin{equation*}
\begin{cases}
-\partial_tz-\Delta_\mathcal{X}z+b\cdot D_{\mathcal{X}}z=0, & \text{ in } [t_0,t') \times \mathbb{T}_{\mathbb{G}}, \\
z(t')=\xi, & \text{ in } \mathbb{T}_{\mathbb{G}}.
\end{cases}
\end{equation*}
Lemma \ref{Lem_LHJB wellposed regularity} states that
\begin{equation}\label{test fun. regularity}
\sup_{t \in[t_0, t']}\|z(t, \cdot)\|_{C_{\mathcal{X}}^{k+\alpha}\left(\mathbb{T}_{\mathbb{G}}\right)} +\sup_{\substack{t_1 \neq t_2\\t_1,t_2 \in [t_0,t']}} \frac{\left\|z\left(t_2, \cdot\right)-z(t_1, \cdot)\right\|_{C_{\mathcal{X}}^{k-1+\alpha}\left(\mathbb{T}_{\mathbb{G}}\right)}}{\left|t_2-t_1\right|^{\frac{1}{2}}} \leq C\left\|\xi\right\|_{C_{\mathcal{X}}^{k+\alpha}\left(\mathbb{T}_{\mathbb{G}}\right)},
\end{equation}
where $C>0$ depends on $\mathbb{G}$, $\alpha$, $k$, $T$ and $\sup_{t\in(t_0,T)}\|b(t,\cdot)\|_{C_{\mathcal{X}}^{k-2+\alpha}\left(\mathbb{T}_{\mathbb{G}};\mathbb{R}^{n_1}\right)}$ only.
Choosing $z$ as a test function for the equation of $\rho$ in \eqref{general linear MFG system} to obtain the weak formulation, then using \eqref{Assump_linear system}, \eqref{test fun. regularity} and \eqref{rho_space regularity 2}, we have
\begin{align*}
& \left\langle\rho(t')-\rho(t),\xi(\cdot)\right\rangle \\
= & \left\langle\rho(t),z(t,\cdot)-z(t',\cdot)\right\rangle -\int_{t}^{t'}\int_{\mathbb{T}_{\mathbb{G}}} D_{\mathcal{X}}z(s,x)\cdot\left(PD_{\mathcal{X}}z\right)(s,x) dm(s)(x)ds \\
& -\int_{t}^{t'}\int_{\mathbb{T}_{\mathbb{G}}}\Upsilon(s,x)\cdot D_{\mathcal{X}}z(s,x)dxds \\
\leq & \left\|z\left(t', \cdot\right)-z(t, \cdot)\right\|_{C_{\mathcal{X}}^{k-1+\alpha}\left(\mathbb{T}_{\mathbb{G}}\right)} \sup_{t \in [t_0,T]} \|\rho(t)\|_{C_{\mathcal{X}}^{-(k-1+\alpha)}(\mathbb{T}_{\mathbb{G}})} \\
& +(t'-t) \left\|D_{\mathcal{X}}z\right\|_{L^{\infty}\left((t,t')\times\mathbb{T}_{\mathbb{G}}\right)} \left(\bar{c}^{-1}\left\|D_{\mathcal{X}}z\right\|_{L^{\infty}\left((t,t')\times\mathbb{T}_{\mathbb{G}}\right)}+\|\Upsilon\|_{L^1\left([t,t']\times\mathbb{T}_{\mathbb{G}};\mathbb{R}^{n_1}\right)}\right) \\
\leq & CM(t'-t)^{\frac{1}{2}}\left\|\xi\right\|_{C_{\mathcal{X}}^{k+\alpha}\left(\mathbb{T}_{\mathbb{G}}\right)}.
\end{align*}
Hence,
$$
\sup _{\substack{t \neq t'\\t,t' \in [t_0,T]}} \frac{\left\|\rho\left(t^{\prime}\right)-\rho(t)\right\|_{C_{\mathcal{X}}^{-(k+\alpha)}(\mathbb{T}_{\mathbb{G}})}}{\left|t^{\prime}-t\right|^\frac{1}{2}} \leq CM,
$$
where $C>0$ depends on $\mathbb{G}$, $\alpha$, $k$, $T$, $\sup_{t\in(t_0,T)}\|b(t,\cdot)\|_{C_{\mathcal{X}}^{k-1+\alpha}\left(\mathbb{T}_{\mathbb{G}};\mathbb{R}^{n_1}\right)}$ and $\bar{c}$ only.

\emph{Step 3: Uniqueness.} Let $\left(z_{1}, \rho_{1}\right)$ and $\left(z_{2}, \rho_{2}\right)$ be two solutions to the system \eqref{general linear MFG system}. Then, the couple $(\bar{z},\bar{\rho}):=\left(z_{1}-z_{2}, \rho_{1}-\rho_{2}\right)$ satisfies the linear system as follows:
\begin{equation*}
\begin{cases}
-\partial_t \bar{z}-\Delta_{\mathcal{X}}\bar{z}+b\cdot D_{\mathcal{X}}\bar{z}=\frac{\delta F}{\delta m}(x,m(t))(\bar{\rho}(t)), & \text{in } [t_0,T]\times\mathbb{T}_{\mathbb{G}},\\
\partial_t\bar{\rho}-\Delta_{\mathcal{X}}\bar{\rho}- \operatorname{div}_{\mathcal{X}}\left(\bar{\rho}b\right)=\operatorname{div}_{\mathcal{X}}\left(mP D_{\mathcal{X}}\bar{z}\right), & \text{in } [t_0,T]\times\mathbb{T}_{\mathbb{G}}, \\
\bar{z}(T,x)=\frac{\delta G}{\delta m}(x,m(T))(\bar{\rho}(T)),\quad \bar{\rho}\left(t_{0}\right)=0, & \text{in } \mathbb{T}_{\mathbb{G}}.
\end{cases}
\end{equation*}
It can be obtained from \eqref{linear system_z regularity} and \eqref{linear system_rho regularity} that
$$
\sup_{t \in \left[t_0,T\right]}\|\bar{z}(t,\cdot)\|_{C_{\mathcal{X}}^{k+\alpha}\left(\mathbb{T}_{\mathbb{G}}\right)} + \sup_{t \in \left[t_0,T\right]}\|\bar{\rho}(t)\|_{C_{\mathcal{X}}^{-(k-1+\alpha)}(\mathbb{T}_{\mathbb{G}})} \leq 0,
$$
hence $\bar{z}=0$ and $\bar{\rho}=0$. This concludes the uniqueness.

\emph{Step 4: Stability.} Similar as before, the couple $(\bar{z}^i,\bar{\rho}^i):=\left(z^{i}-z,\rho^{i}-\rho\right)$ satisfies the linear system as follows:
\begin{equation}\label{general linear MFG system_stable}
\begin{cases}
-\partial_t \bar{z}^i-\Delta_{\mathcal{X}}\bar{z}^i+b^i\cdot D_{\mathcal{X}}\bar{z}^i=\frac{\delta F}{\delta m}(x,m(t))(\bar{\rho}^i(t))+\bar{f}^i-\bar{b}^i\cdot D_{\mathcal{X}}z, & \text{in } [t_0,T]\times\mathbb{T}_{\mathbb{G}},\\
\partial_t\bar{\rho}^i-\Delta_{\mathcal{X}}\bar{\rho}^i- \operatorname{div}_{\mathcal{X}}\left(\bar{\rho}^ib^i\right)=\operatorname{div}_{\mathcal{X}}\left(mP D_{\mathcal{X}}\bar{z}^i+\bar{\Upsilon}^i+\rho\bar{b}^i\right), & \text{in } [t_0,T]\times\mathbb{T}_{\mathbb{G}}, \\
\bar{z}^i(T,x)=\frac{\delta G}{\delta m}(x,m(T))(\bar{\rho}^i(T))+\bar{g}^i,\quad \bar{\rho}^i\left(t_{0}\right)=0, & \text{in } \mathbb{T}_{\mathbb{G}},
\end{cases}
\end{equation}
where $\bar{b}^i:=b^i-b$, $\bar{f}^i:=f^i-f$, $\bar{\Upsilon}^i:=\Upsilon^i-\Upsilon$ and $\bar{g}^i:=g^i-g$. Following the same way as \eqref{linear system_z regularity}, we can obtain
\begin{align*}
\sup_{t \in \left[t_0,T\right]}\|\bar{z}^i(t,\cdot)\|_{C_{\mathcal{X}}^{k+\alpha}\left(\mathbb{T}_{\mathbb{G}}\right)} \leq & \bigg(\sup_{t \in \left(t_{0},T\right)}\|\left(\bar{f}^i-\bar{b}^i\cdot D_{\mathcal{X}}z\right)(t,\cdot)\|_{C_{\mathcal{X}}^{k-1+\alpha}\left(\mathbb{T}_{\mathbb{G}}\right)}+\|\bar{\Upsilon}^i\|_{L^1\left([t_0,T]\times\mathbb{T}_{\mathbb{G}};\mathbb{R}^{n_1}\right)} \\
& +\left\|\bar{g}^i\right\|_{C_{\mathcal{X}}^{k+\alpha}\left(\mathbb{T}_{\mathbb{G}}\right)}+\left\|\operatorname{div}_{\mathcal{X}}\left(\rho\bar{b}^i\right)\right\|_{L^1\left([t_0,T];C_{\mathcal{X}}^{-(k-1+\alpha)}(\mathbb{T}_{\mathbb{G}})\right)}\bigg) \to0
\end{align*}
as $i\to+\infty$, where we note that
\begin{equation*}
\left\|\operatorname{div}_{\mathcal{X}}(\rho\bar{b}^i)\right\|_{L^1\left([t_0,T];C_{\mathcal{X}}^{-(k-1+\alpha)}(\mathbb{T}_{\mathbb{G}})\right)}
\leq C\left\|\bar{b}^i\right\|_{C_{\mathcal{X}}^{0,k-2}\left([t_0,T]\times\mathbb{T}_{\mathbb{G}}\right)}
\end{equation*}
by referring to \cite[(5.5)]{JWY}. Further, based on the stability of equation \eqref{general FPK} (see Lemma \ref{Lem_general FPK wellposed regularity}), we have $\bar{\rho}^i\to 0$ in $C\left([t_0,T];C_{\mathcal{X}}^{-(k-1+\alpha)}(\mathbb{T}_{\mathbb{G}})\right)$ as $i\to +\infty$. Letting $i\to+\infty$ in \eqref{general linear MFG system_stable}, the stability can eventually be achieved.
\end{proof}
\begin{Rem}
Based on the linearity of \eqref{general linear MFG system}, the regularity results \eqref{linear system_z regularity} and \eqref{linear system_rho regularity}, and the uniqueness of the solution, we can conclude that the map $\rho_{0} \mapsto (z,\rho)$ is linear and continuous from $C_{\mathcal{X}}^{-k}\left([0,1)^n\right)\cap C_{\mathcal{X}}^{-(k-1+\alpha)}(\mathbb{T}_{\mathbb{G}})$ into $C\left(\left[t_{0},T\right];C_{\mathcal{X}}^{k+\alpha}(\mathbb{T}_{\mathbb{G}}) \times C_{\mathcal{X}}^{-(k-1+\alpha)}(\mathbb{T}_{\mathbb{G}})\right)$.
\end{Rem}
Applying Lemma \ref{Lem_GL system regularity} to the linearized MFG system \eqref{linear MFG system} gives the following important conclusion.
\begin{Cor}\label{Lem_LMFG system regularity}
Let assumptions \ref{assum1}-\ref{assum3} and the conclusions of Proposition \ref{Prop_MFG system wellposed regularity} hold, and $(t_0,m_{0})\in[0,T]\times\mathcal{P}\left(\mathbb{T}_{\mathbb{G}}\right)$. If $\rho_{0} \in C_{\mathcal{X}}^{-(k-1)}\left([0,1)^n\right)\cap C_{\mathcal{X}}^{-(k-1+\alpha)}(\mathbb{T}_{\mathbb{G}})$, then there exists a unique solution $(z,\rho)$ to the system \eqref{linear MFG system} in the sense of Definition \ref{Def_linear system sol.}, and satisfies
\begin{equation}\label{linear MFG system_z regularity}
\begin{gathered}
\sup_{t \in[t_0, T]}\|z(t, \cdot)\|_{C_{\mathcal{X}}^{k+\alpha}\left(\mathbb{T}_{\mathbb{G}}\right)} \leq C\left\|\rho_{0}\right\|_{C_{\mathcal{X}}^{-(k-1+\alpha)}(\mathbb{T}_{\mathbb{G}})}, \\
\sup _{\substack{t \neq t'\\t,t' \in [t_0,T-\epsilon]}} \frac{\left\|z\left(t^{\prime}, \cdot\right)-z(t, \cdot)\right\|_{C_{\mathcal{X}}^{k+\alpha}\left(\mathbb{T}_{\mathbb{G}}\right)}}{\left|t^{\prime}-t\right|^{\frac{1}{2}}} \leq \epsilon^{-\frac{1}{2}}C\left\|\rho_{0}\right\|_{C_{\mathcal{X}}^{-(k-1+\alpha)}(\mathbb{T}_{\mathbb{G}})}
\end{gathered}
\end{equation}
for any $\epsilon\in(0,T-t_0)$, and
\begin{equation}\label{linear MFG system_rho regularity}
\sup_{t \in \left[t_{0},T\right]}\|\rho(t)\|_{C_\mathcal{X}^{-(k-1+\alpha)}(\mathbb{T}_{\mathbb{G}})} + \sup _{\substack{t \neq t'\\t,t' \in [t_0,T]}} \frac{\left\|\rho\left(t^{\prime}\right)-\rho(t)\right\|_{C_\mathcal{X}^{-(k+\alpha)}(\mathbb{T}_{\mathbb{G}})}}{\left|t^{\prime}-t\right|^{\frac{1}{2}}} \leq C\left\|\rho_{0}\right\|_{C_{\mathcal{X}}^{-(k-1+\alpha)}(\mathbb{T}_{\mathbb{G}})},
\end{equation}
where the constants $C>0$ depend on $\mathbb{G}$, $\alpha$, $k$, $T$, $\sup_{t\in(t_0,T)}\|u(t,\cdot)\|_{C_{\mathcal{X}}^{k+\alpha}\left(\mathbb{T}_{\mathbb{G}}\right)}$, $c_F$, $c_G$ and $H$ only.
\end{Cor}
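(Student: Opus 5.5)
Corollary \ref{Lem_LMFG system regularity} will follow by specializing Lemma \ref{Lem_GL system regularity}. We take $b:=D_pH(x,D_{\mathcal{X}}u)$, $P:=D_{pp}^2H(x,D_{\mathcal{X}}u)$, $f:=0$, $\Upsilon:=0$, $g:=0$. Here $m$ is the measure component of the solution $(u,m)$ of the MFG system \eqref{MFG system} with initial datum $(t_0,m_0)$. With these choices, system \eqref{general linear MFG system} becomes exactly \eqref{linear MFG system}. Since $f,\Upsilon,g$ vanish, the quantity $M$ in Lemma \ref{Lem_GL system regularity} reduces to $\|\rho_0\|_{C_{\mathcal{X}}^{-(k-1+\alpha)}(\mathbb{T}_{\mathbb{G}})}$. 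Then \eqref{linear system_z regularity}--\eqref{linear system_rho regularity} become \eqref{linear MFG system_z regularity}--\eqref{linear MFG system_rho regularity}, and existence and uniqueness carry over directly. So the work is to check the hypotheses of Lemma \ref{Lem_GL system regularity}, with constants controlled only by $\sup_t\|u(t,\cdot)\|_{C^{k+\alpha}_{\mathcal{X}}}$, $c_F$, $c_G$ and $H$.

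\textbf{Step 1: the measure $m$.} Proposition \ref{Prop_MFG system wellposed regularity} gives $m\in C^{\frac12}([t_0,T];\mathcal{P}(\mathbb{T}_{\mathbb{G}}))$, with Hölder constant bounded by the constant $C$ in \eqref{MFG_regularity}. This yields the first condition in \eqref{Assump_linear system}.

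\textbf{Step 2: the matrix $P$.} Assumption \ref{assum3} gives $c_H^{-1}I\le P\le c_H I$. Since $H$ is smooth, $P$ is real symmetric. It is continuous because $D_{\mathcal{X}}u$ is continuous, as $u\in C^{1,2}_{\mathcal{X}}$. So we may take $\bar c\ge\max\{c_H,C\}$.

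\textbf{Step 3: regularity of $b$ (the main point).} We need
\[
b\in C_{\mathcal{X}}^{\frac{\alpha}{2},k-2+\alpha}\cap B\bigl([t_0,T];C_{\mathcal{X}}^{k-1+\alpha}\bigr).
\]
By \eqref{MFG_regularity}, $u\in C^{\frac{\alpha}{2},k+\alpha}_{\mathcal{X}}$ and $u\in C^{\frac{2+\alpha}{2},2+\alpha}_{\mathcal{X}}$. Hence $D_{\mathcal{X}}u(t,\cdot)\in C^{k-1+\alpha}_{\mathcal{X}}$ uniformly in $t$, and $D_{\mathcal{X}}u\in C^{\frac{\alpha}{2},k-1+\alpha}_{\mathcal{X}}\subset C^{\frac{\alpha}{2},k-2+\alpha}_{\mathcal{X}}$.

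It remains to show that composing with the smooth map $D_pH$ preserves these non-isotropic Hölder classes. For this we expand $X_I\bigl(D_pH(x,D_{\mathcal{X}}u)\bigr)$ by the chain and Leibniz rules along the vector fields $X_i$; no commutation of the $X_i$ is needed, because each $X_i$ is a first-order derivation. Each term is then a product of derivatives of $H$, evaluated at $(x,D_{\mathcal{X}}u)$, with factors $X_J D_{\mathcal{X}}u$, $|J|\le|I|$. The $x$-derivatives of $H$ enter through $X_i$ acting on the first argument, which is fine since $H$ is smooth on $\mathbb{T}_{\mathbb{G}}\times\mathbb{R}^{n_1}$. The Hölder seminorms are controlled because products and smooth compositions of bounded $C^{\alpha}_{\mathcal{X}}$ functions remain $C^{\alpha}_{\mathcal{X}}$, with respect to $d_{cc}$ and to the parabolic distance. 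All bounds are polynomial in $\sup_t\|u(t,\cdot)\|_{C^{k+\alpha}_{\mathcal{X}}}$ and depend on $H$ through finitely many derivatives on the compact set $\{|p|\le\|D_{\mathcal{X}}u\|_\infty\}$. In the case $H=\frac12|p|^2$ this step is immediate: $b=D_{\mathcal{X}}u$ and $P=I$.

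\textbf{Step 4: the data.} We have $\rho_0\in C^{-(k-1)}_{\mathcal{X}}([0,1)^n)\cap C^{-(k-1+\alpha)}_{\mathcal{X}}(\mathbb{T}_{\mathbb{G}})$ by hypothesis, and assumptions \ref{assum1}--\ref{assum2} are assumed. With Steps 1--3 in place, Lemma \ref{Lem_GL system regularity} applies. Its constants depend on $\mathbb{G},\alpha,k,T,c_F,c_G,\bar c$ and $\sup_t\|b\|_{C^{k-1+\alpha}_{\mathcal{X}}}$, and by Steps 2--3 these depend only on the listed quantities, which concludes the proof.
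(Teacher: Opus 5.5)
Your proposal is correct and is the paper's proof: a direct specialization of Lemma \ref{Lem_GL system regularity} with $b=D_pH(x,D_{\mathcal{X}}u)$, $P=D_{pp}^2H(x,D_{\mathcal{X}}u)$ and $f=\Upsilon=g=0$, using Proposition \ref{Prop_MFG system wellposed regularity} for the regularity of $b$ and of $m$. You additionally write out checks the paper leaves implicit: the bounds on $P$ from \ref{assum3}, the chain-rule and Hölder estimate for $D_pH(x,D_{\mathcal{X}}u)$, and the fact that all constants (including $\bar c$) depend only on the listed quantities.
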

\begin{proof}
This is a direct application of Lemma \ref{Lem_GL system regularity} with $b(t,x)=D_pH(x,D_{\mathcal{X}}u(t,x))$, $P(t,x)=D_{pp}^2 H(x,D_{\mathcal{X}}u(t,x))$ and $f=\Upsilon=g=0$ in the system \eqref{general linear MFG system}. Here, we note that according to Proposition \ref{Prop_MFG system wellposed regularity}, $b$ belongs to $C_{\mathcal{X}}^{\frac{\alpha}{2},k-1+\alpha}\left([t_0,T] \times \mathbb{T}_{\mathbb{G}}\right)$.
\end{proof}
Using the above lemma we can then prove Lemma \ref{Lem_relation of z and rho_0}.
\begin{proof}[\bf{Proof of Lemma \ref{Lem_relation of z and rho_0}}]
Denote $z(t,x;\rho_0)$ as the solution to the first component of the system \eqref{linear MFG system} with $m(t_0)=m_0$ and $\rho(t_0)=\rho_0$. Setting $\rho_0=\delta_y$, which is the Dirac measure mass at $y\in\mathbb{T}_{\mathbb{G}}$, we can define
$$
K(t_0,x,m_0,y):=z(t_0,x;\delta_y),\,x,y\in\mathbb{T}_{\mathbb{G}}.
$$
Thanks to the linearity of the system and the density of simple measures, we easily get that the representation formula \eqref{linear MFG system_rep.formula} holds true. Moreover, for any $y \in \mathbb{R}^n$, let $\gamma(\tau)$ be the absolutely continuous integral curve of the vector field $X_i,i\in\{1,\ldots,n_1\}$ such that
$$
\begin{cases}
\gamma^{\prime}(\tau)=X_i(\gamma(\tau)), \\
\gamma(0)=y.
\end{cases}
$$
Then
\begin{equation*}
\frac{K(t_0,x,m_0,\gamma(\tau))-K(t_0,x,m_0,y)}{\tau}=z(t_0,x;\frac{\delta_{\gamma(\tau)}-\delta_y}{\tau}).
\end{equation*}
Denoting $\Delta_{\tau}^y f:=\frac{f(\gamma(\tau))-f(y)}{\tau}$ for any map $f$, we need to prove the limit of the above equality exists as $\tau \to 0$. From \eqref{linear MFG system_z regularity} and the Lagrange mean value theorem, we have that for any $\tau_1,\tau_2 >0$,
\begin{align*}
& \left\|\Delta_{\tau_1}^y K(t_0,\cdot,m_0,y)-\Delta_{\tau_2}^y K(t_0,\cdot,m_0,y)\right\|_{C_{\mathcal{X}}^{k+\alpha}\left(\mathbb{T}_{\mathbb{G}}\right)}\\
\leq & C\left\|\Delta_{\tau_1}^y \delta_{(\cdot)}-\Delta_{\tau_2}^y \delta_{(\cdot)}\right\|_{C_{\mathcal{X}}^{-(k-1+\alpha)}(\mathbb{T}_{\mathbb{G}})} \\
= & C\sup_{\|\xi\|_{C_{\mathcal{X}}^{k-1+\alpha}(\mathbb{T}_{\mathbb{G}})}\leq 1}\left(\Delta_{\tau_1}^y \xi-\Delta_{\tau_2}^y \xi\right) \\
= & C\sup_{\|\xi\|_{C_{\mathcal{X}}^{k-1+\alpha}(\mathbb{T}_{\mathbb{G}})}\leq 1}\left(\frac{d\xi\left(\gamma(\tau)\right)}{d\tau}\bigg|_{\tau=\lambda_1 \tau_1}-\frac{d\xi\left(\gamma(\tau)\right)}{d\tau}\bigg|_{\tau=\lambda_2 \tau_2}\right) \\
= & C\sup_{\|\xi\|_{C_{\mathcal{X}}^{k-1+\alpha}(\mathbb{T}_{\mathbb{G}})}\leq 1}\left(X_i\xi\left(\gamma(\lambda_1\tau_1)\right)-X_i\xi\left(\gamma(\lambda_2\tau_2)\right)\right)\\
\leq & Cd_{cc}\left(\gamma(\lambda_1\tau_1),\gamma(\lambda_2\tau_2)\right)^{\alpha} \leq C\left|\lambda_1\tau_1-\lambda_2\tau_2\right|^{\alpha},
\end{align*}
for some $\lambda_j \in (0,1), j=1,2$. Letting $\tau_1,\tau_2 \to 0$, we get that $\left\{\Delta_{\tau}^y K\right\}_{\tau>0}$ is Cauchy and the limit exists, that is
$$
X_i^y K(t_0,x,m_0,y)=z(t_0,x;X_i^* \delta_y).
$$

Replacing $K$ with $X_i K$ and repeating the above steps, we then obtain that for any multi-index $I=(i_1,\ldots,i_{|I|})$ with $|I|\leq k-1$,
$$
X_I^y K(t_0,x,m_0,y)=z(t_0,x;X_{i_1}^*\cdots X_{i_{|I|}}^* \delta_y).
$$
Moreover, fix any $y,y' \in \mathbb{T}_{\mathbb{G}}$ with $y\neq y'$, using \eqref{linear MFG system_z regularity} again, we get
\begin{align*}
\left\|X_I^y K(t_0,\cdot,m_0,y)-X_I^y K(t_0,\cdot,m_0,y')\right\|_{C_{\mathcal{X}}^{k+\alpha}\left(\mathbb{T}_{\mathbb{G}}\right)} \leq
& C\left\|\left(X_I\right)^*\delta_y-\left(X_I\right)^*\delta_{y'}\right\|_{C_{\mathcal{X}}^{-(k-1+\alpha)}(\mathbb{T}_{\mathbb{G}})} \\
\leq & C\left\|\delta_y-\delta_{y'}\right\|_{C_{\mathcal{X}}^{-\alpha}(\mathbb{T}_{\mathbb{G}})} \\
\leq & Cd_{cc}^{\mathbb{T}_{\mathbb{G}}}(y,y')^{\alpha},
\end{align*}
where $C>0$ depends on $\mathbb{G}$, $\alpha$, $k$, $T$, $\sup_{t\in(t_0,T)}\|u(t,\cdot)\|_{C_{\mathcal{X}}^{k+\alpha}\left(\mathbb{T}_{\mathbb{G}}\right)}$, $c_F$, $c_G$ and $H$ only. Consequently,
\begin{equation*}
\left\|K\left(t_{0}, \cdot, m_{0}, \cdot\right)\right\|_{C_{\mathcal{X},y}^{k-1+\alpha}\left(\mathbb{T}_{\mathbb{G}};C_{\mathcal{X},x}^{k+\alpha}\left(\mathbb{T}_{\mathbb{G}}\right)\right)} \leq C,
\end{equation*}
where $C>0$ does not depend on $(t_0,m_0)$. In addition, it follows from the stability of the solutions to the MFG system \eqref{MFG system} and the linearized system \eqref{general linear MFG system}, and the Lipschitz continuity of $\frac{\delta F}{\delta m}$ and $\frac{\delta G}{\delta m}$ with respect to the measure that, for any multi-index $J$ and $J'$ with $|J|\leq k$ and $|J'|\leq k-1$, $X_{J}^x X_{J'}^y K\left(t_{0},x,m_{0},y\right)$ is continuous in $(t_0,m_0)$ uniformly with respect to $(x,y)$.

Here ends the proof of the lemma.
\end{proof}

\subsection{Differentiability of \texorpdfstring{$U$}{U} in \texorpdfstring{$m_0$}{m0}}
Now we prove that the function $K$ in \eqref{linear MFG system_rep.formula} is in fact the derivative of $U$ in $m_0$, which is concluded in Proposition \ref{Prop_U C^1 w.r.t. m}.

\begin{proof}[\bf{Proof of Proposition \ref{Prop_U C^1 w.r.t. m}}]
Set $v:=\hat{u}-u-z$ and $\mu:=\hat{m}-m-\rho$, then the couple $(v,\mu)$ satisfies the following linear system:
\begin{equation}\label{linear system_(v,mu)}
\begin{cases}
-\partial_t v-\Delta_{\mathcal{X}} v+D_pH(x,D_{\mathcal{X}}u) \cdot D_{\mathcal{X}} v=\frac{\delta F}{\delta m}(x,m(t))(\mu(t))+f(t,x),& \text {in }[t_0, T] \times \mathbb{T}_{\mathbb{G}}, \\
\partial_t\mu-\Delta_\mathcal{X}\mu-\operatorname{div}_{\mathcal{X}}\left(\mu D_pH(x,D_{\mathcal{X}}u)\right) \\
=\operatorname{div}_{\mathcal{X}}\left(m D_{pp}^2H(x,D_{\mathcal{X}}u) D_{\mathcal{X}}v+\Upsilon\right),& \text {in }[t_0, T] \times \mathbb{T}_{\mathbb{G}}, \\
v(T, x)=\frac{\delta G}{\delta m}(x,m(T))(\mu(T))+g(x),\quad \mu(t_0)=0,& \text {in }\mathbb{T}_{\mathbb{G}},
\end{cases}
\end{equation}
where for any $t\in[t_0,T]$ and $x\in\mathbb{T}_{\mathbb{G}}$,
\begin{align*}
f(t, x):= & -\int_{0}^{1}\left(D_{p} H\left(x, s D_{\mathcal{X}} \hat{u}+(1-s) D_{\mathcal{X}} u\right)-D_{p} H\left(x, D_{\mathcal{X}} u\right)\right) \cdot D_{\mathcal{X}}(\hat{u}-u) d s \\
& + \int_{0}^{1} \int_{\mathbb{T}_{\mathbb{G}}}\left(\frac{\delta F}{\delta m}(x, s \hat{m}(t)+(1-s) m(t), y)-\frac{\delta F}{\delta m}(x, m(t), y)\right) d(\hat{m}(t)-m(t))(y) ds,
\end{align*}
\begin{align*}
\Upsilon(t,x):= & (\hat{m}-m)(t) D_{pp}^{2} H\left(x, D_{\mathcal{X}} u(t, x)\right)\left(D_{\mathcal{X}}\hat{u}-D_{\mathcal{X}} u\right)(t, x) \\
& + \hat{m}(t) \int_{0}^{1}\left(D_{pp}^{2} H\left(x, s D_{\mathcal{X}} \hat{u}(t, x)+(1-s) D_{\mathcal{X}} u(t, x)\right)\right. \\
& \qquad\qquad\left.-D_{pp}^{2} H\left(x, D_{\mathcal{X}} u(t, x)\right)\right)\left(D_{\mathcal{X}} \hat{u}-D_{\mathcal{X}} u\right)(t, x) d s
\end{align*}
and
\begin{equation*}
g(x):= \int_{0}^{1} \int_{\mathbb{T}_{\mathbb{G}}}\left(\frac{\delta G}{\delta m}(x, s \hat{m}(T)+(1-s) m(T), y) -\frac{\delta G}{\delta m}(x, m(T), y)\right) d(\hat{m}(T)-m(T))(y) ds.
\end{equation*}
From assumptions \ref{assum1}-\ref{assum3} and Proposition \ref{Prop_MFG system wellposed regularity}, it is easy to verify that the coefficients of system \eqref{linear system_(v,mu)} satisfy the conditions in Lemma \ref{Lem_GL system regularity}. In addition, using Proposition \ref{Prop_Lip. ctn. of U}, we have
\begin{align*}
& \sup_{t \in \left(t_{0},T\right)}\|f(t,\cdot)\|_{C_{\mathcal{X}}^{k-1+\alpha}\left(\mathbb{T}_{\mathbb{G}}\right)} \\
\leq & C\sup_{t\in\left(t_{0},T\right)}\left(\left\|\hat{u}(t,\cdot)-u(t,\cdot)\right\|_{C_{\mathcal{X}}^{k+\alpha}(\mathbb{T}_{\mathbb{G}})}^2 +\operatorname{Lip}\left(\frac{\delta F}{\delta m}\right)d_1\left(\hat{m}(t),m(t)\right)^2\right) \leq C d_1^2\left(\hat{m}_0,m_0\right),
\end{align*}
\begin{align*}
& \|\Upsilon\|_{L^1\left([t_0,T]\times\mathbb{T}_{\mathbb{G}};\mathbb{R}^{n_1}\right)} \\
\leq & C\sup_{t\in\left(t_{0},T\right)}\left(\left\|\left(\hat{u}-u\right)(t,\cdot)\right\|_{C_{\mathcal{X}}^{1+1}\left(\mathbb{T}_{\mathbb{G}}\right)} d_1\left(\hat{m}(t),m(t)\right) +\left\|\left(\hat{u}-u\right)(t,\cdot)\right\|_{C_{\mathcal{X}}^{1}\left(\mathbb{T}_{\mathbb{G}}\right)}^2\right) \\
\leq & C d_1^2\left(\hat{m}_0,m_0\right),
\end{align*}
and
\begin{align*}
\left\|g\right\|_{C_{\mathcal{X}}^{k+\alpha}\left(\mathbb{T}_{\mathbb{G}}\right)} \leq C\operatorname{Lip}\left(\frac{\delta G}{\delta m}\right)d_1\left(\hat{m}(T),m(T)\right)^2 \leq C d_1^2\left(\hat{m}_0,m_0\right),
\end{align*}
where $C>0$ depends on $\mathbb{G}$, $\alpha$, $k$, $T$, $c_{F}$, $c_{G}$ and $H$ only. Then, we apply Lemma \ref{Lem_GL system regularity} to obtain the estimate of the solution $(v,\mu)$, i.e. \eqref{(hat u-u-z,hat m-m-rho)_regularity} holds true.

Finally, for any $m_0, m_0^{\prime} \in \mathcal{P}\left(\mathbb{T}_{\mathbb{G}}\right)$, set $\hat{m}_0:=(1-s)m_0+sm_0^{\prime}$. Using Lemma \ref{Lem_relation of z and rho_0} and \eqref{(hat u-u-z,hat m-m-rho)_regularity}, we can get that there exists a $C_{\mathcal{X}}^{k+\alpha}(\mathbb{T}_{\mathbb{G}}) \times C_{\mathcal{X}}^{k-1+\alpha}(\mathbb{T}_{\mathbb{G}})$ map $\frac{\delta U}{\delta m}(t_0,\cdot,m_0,\cdot): \mathbb{T}_{\mathbb{G}} \times \mathbb{T}_{\mathbb{G}} \to \mathbb{R}$ such that
\begin{align*}
& \left\|\frac{U\left(t_0,\cdot,(1-s)m_0+sm_0^{\prime}\right)-U(t_0,\cdot,m_0)}{s} -\int_{\mathbb{T}_{\mathbb{G}}} \frac{\delta U}{\delta m}(t_0,\cdot,m_0,y) d\left(m_0^{\prime}-m_0\right)(y)\right\|_{C_{\mathcal{X}}^{k+\alpha}\left(\mathbb{T}_{\mathbb{G}}\right)} \\
= & \frac{1}{s}\left\|\hat{u}(t_0,\cdot)-u(t_0,\cdot)-z(t_0,\cdot)\right\|_{C_{\mathcal{X}}^{k+\alpha}\left(\mathbb{T}_{\mathbb{G}}\right)} \leq C s d_{1}^{2}\left(m_0^{\prime},m_0\right)\to0\text{ as }s\to0^+.
\end{align*}
Thus $U(t_0,x,m_0,y)$ is $C^1$ in $m_0$, and the regularity of the derivative $\frac{\delta U}{\delta m}(t_0,x,m_0,y)$ can be directly obtained from Lemma \ref{Lem_relation of z and rho_0} and \eqref{(hat u-u-z,hat m-m-rho)_regularity}.

Therefore, we have obtained that this proposition holds true.
\end{proof}

\section{Solvability of the master equation}\label{Sec_5}

We have by now obtained the desired properties of the solution to the degenerate MFG system \eqref{MFG system} and the $C^1$ differentiability of $U$ with respect to the measure. Accordingly, we can proceed to prove the main theorem.
\begin{proof}[\bf{Proof of Theorem \ref{Thm_ME wellposed regularity}}]

\emph{Step 1: Existence.}
Recall Proposition \ref{Prop_MFG system wellposed regularity}, let $(u,m)$ be the unique solution to the degenerate MFG system \eqref{MFG system} with the initial condition $(t_0,m_0) \in [0,T]\times\mathcal{P}\left(\mathbb{T}_{\mathbb{G}}\right)$, and set $U(t_0,x,m_0):=u(t_0,x)$ for any $x\in\mathbb{T}_{\mathbb{G}}$. We need to prove that $U$ is a solution to the master equation \eqref{ME} in the sense of Definition \ref{Def_ME sol.}. By referring to \cite[Corollary 3.1]{JWY}, we can take a smooth sequence $\{m_0^{i}\}_{i=1}^{+\infty}$ to approximate $m_0$. Denoting the corresponding solutions to the MFG system \eqref{MFG system} as $\left(u^{i},m^{i}\right)$, we then compute
\begin{align*}
\partial_{t_0} U\left(t_0,x,m_0^{i}\right) = & \lim_{\Delta t \to 0}\frac{U\left(t_0+\Delta t,x,m_0^{i}\right)-U\left(t_0,x,m_0^{i}\right)}{\Delta t} \\
= & \lim_{\Delta t \to 0}\frac{U\left(t_0+\Delta t,x,m_0^{i}\right)-U\left(t_0+\Delta t,x,m^{i}\left(t_0+\Delta t\right)\right)}{\Delta t} \\
& + \lim_{\Delta t \to 0}\frac{U\left(t_0+\Delta t,x,m^{i}\left(t_0+\Delta t\right)\right)-U\left(t_0,x,m_0^{i}\right)}{\Delta t} =: I+II.
\end{align*}
Set $m^{i,s}:=(1-s)m^{i}\left(t_0\right)+sm^{i}\left(t_0+\Delta t\right)$ for any $s\in(0,1)$. Since $U$ is $C^1$ differentiable with respect to the measure by Proposition \ref{Prop_U C^1 w.r.t. m} and $m^{i}\in C_{\mathcal{X}}^{\frac{2+\alpha}{2},2+\alpha}\left([t_0,T]\times\mathbb{T}_{\mathbb{G}}\right)$ by Proposition \ref{Prop_MFG system wellposed regularity}, using \eqref{delta U/delta m_regularity}, the continuity of $\frac{\delta U}{\delta m}$ and integration by parts, we have
\begin{align*}
I = & \lim_{\Delta t \to 0} \int_0^1 \int_{\mathbb{T}_{\mathbb{G}}} \frac{\delta U}{\delta m}\left(t_0+\Delta t, x, m^{i,s}, y\right)\left(\frac{m^{i}\left(t_0, y\right)-m^{i}\left(t_0+\Delta t, y\right)}{\Delta t}\right) d y d s \\
= & \int_{\mathbb{T}_{\mathbb{G}}} \bigg[\frac{\delta U}{\delta m}\left(t_0, x, m_0^{i}, y\right)\left(-\Delta_{\mathcal{X}} m^{i}(t_0,y)-\operatorname{div}_{\mathcal{X}}\left(m^{i}(t_0,y)D_pH(y, D_{\mathcal{X}}u^{i}(t_0,y))\right)\right)\bigg]d y \\
= & \int_{\mathbb{T}_{\mathbb{G}}} -\Delta_{\mathcal{X}}^y \frac{\delta U}{\delta m}\left(t_0, x, m_0^{i}, y\right) dm_0^{i}(y) \\
& +\int_{\mathbb{T}_{\mathbb{G}}} D_{\mathcal{X}}^y \frac{\delta U}{\delta m}\left(t_0, x, m_0^{i}, y\right) \cdot D_p H(y, D_{\mathcal{X}}U(t_0,y,m_0^{i})) dm_0^{i}(y).
\end{align*}
On the other hand,
\begin{align*}
II=& \lim_{\Delta t \to 0}\frac{u^{i}\left(t_0+\Delta t,x\right)-u^{i}\left(t_0,x\right)}{\Delta t}=\partial_{t_0} u^{i}(t_0,x) \\
= & -\Delta_{\mathcal{X}}u^{i}(t_0,x)+H\left(x, D_{\mathcal{X}} u^{i}(t_0,x)\right)-F(x, m_0^{i})\\
= & -\Delta_{\mathcal{X}}U(t_0,x,m_0^{i})+H\left(x, D_{\mathcal{X}} U(t_0,x,m_0^{i})\right)-F(x, m_0^{i}).
\end{align*}
Combining the above two equalities, we can then let $i \to +\infty$ thanks to the continuity of both sides of the equality, thus obtaining that $\partial_{t_0} U\left(t_0,x,m_0\right)$ exists, namely for any $(t_0,x,m_0) \in [0,T] \times \mathbb{T}_{\mathbb{G}} \times \mathcal{P}(\mathbb{T}_{\mathbb{G}})$,
\begin{align*}
\partial_{t_0} U\left(t_0,x,m_0\right)= & -\Delta_{\mathcal{X}}U(t_0,x,m_0)+H\left(x, D_{\mathcal{X}} U(t_0,x,m_0)\right)-F(x, m_0) \\
& -\int_{\mathbb{T}_{\mathbb{G}}} \Delta_{\mathcal{X}}^y \frac{\delta U}{\delta m}\left(t_0, x, m_0, y\right) dm_0(y) \\
& +\int_{\mathbb{T}_{\mathbb{G}}} D_{\mathcal{X}}^y \frac{\delta U}{\delta m}\left(t_0, x, m_0, y\right) \cdot D_p H(y, D_{\mathcal{X}}U(t_0,y,m_0)) dm_0(y).
\end{align*}
This concludes the existence part.

\emph{Step 2: Uniqueness.} Let $V$ be another solution to the master equation \eqref{ME} in the sense of Definition \ref{Def_ME sol.}. Fix any $(t_0,m_0)\in[0,T]\times\mathcal{P}\left(\mathbb{T}_{\mathbb{G}}\right)$, we employ the Schauder fixed point theorem to solve the FPK equation
\begin{equation}\label{ME_uniqueness_FPK 1}
\begin{cases}
\partial_t \tilde{m}-\Delta_{\mathcal{X}} \tilde{m}-\operatorname{div}_{\mathcal{X}}\left(\tilde{m}D_pH\left(x,
D_{\mathcal{X}}V(t,x,\tilde{m}(t))\right)\right)=0, & \text{in } [t_0,T] \times \mathbb{T}_{\mathbb{G}}, \\
\tilde{m}(t_0)=m_0, & \text{in } \mathbb{T}_{\mathbb{G}}.
\end{cases}
\end{equation}
Recall the space $E$ in \eqref{SFPT_E Def.}, fix any $\mu \in E$ and consider the following equation:
\begin{equation}\label{ME_uniqueness_FPK 2}
\begin{cases}
\partial_t m-\Delta_{\mathcal{X}} m-\operatorname{div}_{\mathcal{X}}\left(mD_pH\left(x,
D_{\mathcal{X}}V(t,x,\mu(t))\right)\right)=0, & \text{in } [t_0,T] \times \mathbb{T}_{\mathbb{G}}, \\
m(t_0)=m_0, & \text{in } \mathbb{T}_{\mathbb{G}}.
\end{cases}
\end{equation}
Since $\sup_{(t,m) \in [0,T] \times \mathcal{P}(\mathbb{T}_{\mathbb{G}})}\left\|\frac{\delta V}{\delta m}\left(t, \cdot, m, \cdot\right)\right\|_{C_{\mathcal{X},y}^{1}\left(\mathbb{T}_{\mathbb{G}};C_{\mathcal{X},x}^{1}\left(\mathbb{T}_{\mathbb{G}}\right)\right)}<+\infty$, it can be verified that for any $t\in[0,T]$, $x\in\mathbb{T}_{\mathbb{G}}$ and $m_1, m_2 \in \mathcal{P}\left(\mathbb{T}_{\mathbb{G}}\right)$,
\begin{align}\label{D_mathcal X V_Lip. in m}
& \left|D_{\mathcal{X}}\left(V\left(t,x,m_1\right)-V(t,x,m_2)\right)\right| \notag\\
= & \left|\int_0^1 \int_{\mathbb{T}_{\mathbb{G}}} D_{\mathcal{X}}^x\frac{\delta V}{\delta m}\left(t,x,(1-s) m_2+s m_1, y\right) d\left(m_1-m_2\right)(y)ds\right| \notag\\
\leq & \sup_{(t,m) \in [0,T] \times \mathcal{P}(\mathbb{T}_{\mathbb{G}})}\left\|\frac{\delta V}{\delta m}\left(t, \cdot, m, \cdot\right)\right\|_{C_{\mathcal{X},y}^{1}\left(\mathbb{T}_{\mathbb{G}};C_{\mathcal{X},x}^{1}\left(\mathbb{T}_{\mathbb{G}}\right)\right)}d_1(m_1,m_2),
\end{align}
hence $D_pH\left(x,
D_{\mathcal{X}}V(t,x,\mu(t))\right)\in C_{\mathcal{X}}^{\frac{1}{2},1+\alpha}\left([0,T]\times\mathbb{T}_{\mathbb{G}};\mathbb{R}^{n_1}\right)$. By Lemma \ref{Lem_general FPK wellposed regularity}, there exists a unique distributional solution $m(t) \in C\left([t_0,T];C_{\mathcal{X}}^{-(2+\alpha)}(\mathbb{T}_{\mathbb{G}})\right)$ to the equation \eqref{ME_uniqueness_FPK 2}. Define the map $\Phi(\mu):=m$, in the same way as the proof of Proposition \ref{Prop_MFG system wellposed regularity}, we can obtain that $m \in E$.

As for the continuity of $\Phi$, for any $\mu_i\to \mu$ in $E$ as $i\to+\infty$, let $m^{i}$ be the corresponding solution to the equation \eqref{ME_uniqueness_FPK 2}. Choose any $\xi\in C_\mathcal{X}^{0+1}(\mathbb{T}_{\mathbb{G}})$ with $\xi(x_0)=0$ for some $x_0\in\mathbb{T}_{\mathbb{G}}$, and let $\xi^{\varepsilon}$ be the mollified versions of $\xi$, satisfying
\begin{equation*}
\left[\xi^{\varepsilon}\right] _{C_{\mathcal{X}}^{0+1}\left(\mathbb{T}_{\mathbb{G}}\right)} \leq C\left[\xi\right] _{C_{\mathcal{X}}^{0+1}\left(\mathbb{T}_{\mathbb{G}}\right)},\,\varepsilon>0
\end{equation*}
and $\left\|\xi^{\varepsilon}-\xi\right\|_{L^{\infty}\left(\mathbb{T}_{\mathbb{G}}\right)}\to 0$ as $\varepsilon \to 0$ (see \cite[Proposition 1.2]{JWY}). It follows from Lemma \ref{Lem_LHJB well-posedness} that there exist the unique solutions $z^{\varepsilon,i},z^{\varepsilon}\in C_{{\mathcal{X}}}^{1,2}\left([t_0,T]\times\mathbb{T}_{\mathbb{G}}\right)$ to the equation
\begin{equation*}
\begin{cases}
-\partial_tz-\Delta_\mathcal{X}z+b\cdot D_{\mathcal{X}}z=0, & \text{ in } [t_0,T] \times \mathbb{T}_{\mathbb{G}}, \\
z(T)=\xi^{\varepsilon}, & \text{ in } \mathbb{T}_{\mathbb{G}}
\end{cases}
\end{equation*}
with $b=B^{i}:=D_pH\left(x,
D_{\mathcal{X}}V(t,x,\mu^{i}(t))\right)$ and $b=B:=D_pH\left(x,
D_{\mathcal{X}}V(t,x,\mu(t))\right)$ respectively. Set $\bar{m}^{i}:=m^{i}-m$ and $\bar{z}^{\varepsilon,i}:=z^{\varepsilon,i}-z^{\varepsilon}$. Same as \eqref{d_1(m(t_1),m(t_2))}, for any $t\in[t_0,T]$,
\begin{equation*}
\int_{\mathbb{T}_{\mathbb{G}}}\xi(x) d\bar{m}^{i}(t)(x) =\lim_{\varepsilon\to0}\int_{\mathbb{T}_{\mathbb{G}}}\xi^{\varepsilon}(x) d\bar{m}^{i}(t)(x) =\lim_{\varepsilon\to0}\int_{\mathbb{T}_{\mathbb{G}}}\bar{z}^{\varepsilon,i}(T-t+t_0,x)dm_0(x).
\end{equation*}
We also know that $\bar{z}^{\varepsilon,i}\in C_{{\mathcal{X}}}^{1,2} \left([t_0,T]\times\mathbb{T}_{\mathbb{G}}\right)$ is the unique solution to the equation
\begin{equation*}
\begin{cases}
-\partial_t\bar{z}^{\varepsilon,i}-\Delta_\mathcal{X}\bar{z}^{\varepsilon,i}+B^{i}\cdot D_{\mathcal{X}}\bar{z}^{\varepsilon,i} =\left(B-B^{i}\right)\cdot D_{\mathcal{X}}z^{\varepsilon}, & \text{ in } [t_0,T] \times \mathbb{T}_{\mathbb{G}}, \\
\bar{z}^{\varepsilon,i}(T)=0, & \text{ in } \mathbb{T}_{\mathbb{G}}.
\end{cases}
\end{equation*}
We use Lemma \ref{Lem_LHJB Lipschitz}, \cite[Theorem 1.3(1)]{25JWY} and \eqref{D_mathcal X V_Lip. in m} to get
\begin{align*}
\int_{\mathbb{T}_{\mathbb{G}}}\xi(x) d\bar{m}^{i}(t)(x) \leq \lim_{\varepsilon\to0}\left\|\bar{z}^{\varepsilon,i}\right\|_{L^{\infty}\left([t_0,T]\times \mathbb{T}_{\mathbb{G}}\right)} \leq & \lim_{\varepsilon\to0}C\left\|\left(B-B^{i}\right)\cdot D_{\mathcal{X}}z^{\varepsilon}\right\|_{L^{\infty}\left([t_0,T]\times \mathbb{T}_{\mathbb{G}}\right)} \\
\leq & C\sup_{t \in [t_0,T]}d_1(\mu(t),\mu^{i}(t))\left[\xi\right]_{C_\mathcal{X}^{0+1}(\mathbb{T}_{\mathbb{G}})},
\end{align*}
and for any $t_1,t_2\in[t_0,T]$ with $t_1\neq t_2$,
\begin{align*}
\int_{\mathbb{T}_{\mathbb{G}}}\xi(x) d\left(\bar{m}^{i}(t_1)-\bar{m}^{i}(t_2)\right)(x) = & \lim_{\varepsilon\to0}\int_{\mathbb{T}_{\mathbb{G}}}\left(\bar{z}^{\varepsilon,i}(T-t_1+t_0,x)-\bar{z}^{\varepsilon,i}(T-t_2+t_0,x)\right)dm_0(x) \\
\leq & \lim_{\varepsilon\to0}C\left\|\left(B-B^{i}\right)\cdot D_{\mathcal{X}}z^{\varepsilon}\right\|_{L^{\infty}\left([t_0,T]\times \mathbb{T}_{\mathbb{G}}\right)}|t_1-t_2|^{\frac{1}{2}} \\
\leq & C\sup_{t \in [t_0,T]}d_1(\mu(t),\mu^{i}(t))\left[\xi\right]_{C_\mathcal{X}^{0+1}(\mathbb{T}_{\mathbb{G}})}|t_1-t_2|^{\frac{1}{2}},
\end{align*}
where the constants $C>0$ depend on $\mathbb{G}$, $T$, $H$, $\sup_{m \in \mathcal{P}(\mathbb{T}_{\mathbb{G}})}\left\|D_{\mathcal{X}}V(\cdot,\cdot,m)\right\|_{L^{\infty}\left((0,T)\times\mathbb{T}_{\mathbb{G}}\right)}$ and $\sup_{(t,m) \in [0,T] \times \mathcal{P}(\mathbb{T}_{\mathbb{G}})}\left\|\frac{\delta V}{\delta m}\left(t, \cdot, m, \cdot\right)\right\|_{C_{\mathcal{X},y}^{1}\left(\mathbb{T}_{\mathbb{G}};C_{\mathcal{X},x}^{1}\left(\mathbb{T}_{\mathbb{G}}\right)\right)}$ only. Therefore, we obtain $\|m^{i}-m\|_{E}\to0$ as $i\to+\infty$, which concludes the proof of continuity.

According to the Schauder fixed point theorem, we obtain that there exists a solution $\tilde{m}\in C^{\frac{1}{2}}\left([t_0,T];\mathcal{P}(\mathbb{T}_{\mathbb{G}})\right)$ to the equation \eqref{ME_uniqueness_FPK 1}. Moreover, if $m_0$ has a smooth positive density, by Lemma \ref{Lem_LHJB well-posedness}, we find that $\tilde{m}\in C_{\mathcal{X}}^{1,2} \left([t_0, T] \times \mathbb{T}_{\mathbb{G}} \right)$.

Take a smooth sequence $\{m_0^{i}\}_{i=1}^{+\infty}$ to approximate $m_0$ in $\mathcal{P}\left(\mathbb{T}_{\mathbb{G}}\right)$ (see \cite[Corollary 3.1]{JWY}), and let $\tilde{m}^i$ be the corresponding solution to the equation \eqref{ME_uniqueness_FPK 1}. Now we define $\tilde{u}^i(t,x):=V(t,x,\tilde{m}^i(t))$ for any $(t,x)\in[0,T]\times\mathbb{T}_{\mathbb{G}}$. Due to the regularity of $V$, $\tilde{u}^i$ is at least of class $C_{\mathcal{X}}^{1,2}$ with
\begin{align*}
\partial_t \tilde{u}^i(t, x) = & \partial_t V(t, x, \tilde{m}^i(t))+\int_{\mathbb{T}_{\mathbb{G}}}\frac{\delta V}{\delta m}(t, x, \tilde{m}^i(t),y) \partial_t\tilde{m}^i(t,y)dy \\
= & \partial_t V(t, x, \tilde{m}^i(t))+\int_{\mathbb{T}_{\mathbb{G}}}\frac{\delta V}{\delta m}(t, x, \tilde{m}^i(t), y)\left(\Delta_{\mathcal{X}}\tilde{m}^i\right. \\
& \left.+\operatorname{div}_{\mathcal{X}}\left(\tilde{m}^i D_pH\left(y,D_{\mathcal{X}}V(t, y, \tilde{m}^i(t))\right)\right)\right)dy \\
= & \partial_t V(t, x, \tilde{m}^i(t))+\int_{\mathbb{T}_{\mathbb{G}}} \Delta_{\mathcal{X}}^y\frac{\delta V}{\delta m}(t, x, \tilde{m}^i(t), y) d \tilde{m}^i(t)(y) \\
& -\int_{\mathbb{T}_{\mathbb{G}}}D_{\mathcal{X}}^y \frac{\delta V}{\delta m}(t, x, \tilde{m}^i(t), y) \cdot D_p H\left(y, D_{\mathcal{X}} V(t, y, \tilde{m}^i(t))\right) d \tilde{m}^i(t)(y) \\
= & -\Delta_{\mathcal{X}}V(t, x, \tilde{m}^i(t))+H\left(x, D_{\mathcal{X}}V(t, x, \tilde{m}^i(t))\right)-F(x, \tilde{m}^i(t)),
\end{align*}
where the last equality follows from \eqref{ME}. Thus we obtain that $\tilde{u}^i$ satisfies the HJB equation
\begin{equation*}
-\partial_t \tilde{u}^i(t, x) -\Delta_{\mathcal{X}}\tilde{u}^i(t, x)+H\left(x, D_{\mathcal{X}}\tilde{u}^i(t, x)\right)=F(x, \tilde{m}^i(t))
\end{equation*}
with terminal condition $\tilde{u}^i(T,x)=V(T,x,\tilde{m}^i(T))=G(x,\tilde{m}^i(T))$. Therefore $(\tilde{u}^i,\tilde{m}^i)$ is a solution to the MFG system \eqref{MFG system} with initial condition $(t_0,m_0^i)$. According to the uniqueness of the solution to the MFG system, we have $(\tilde{u}^i,\tilde{m}^i)=(u^{i},m^{i})$. Therefore, $V(t_0,x,m_0)=\lim_{i\to+\infty}V(t_0,x,m_0^i)=\lim_{i\to+\infty}U(t_0,x,m_0^i)=U(t_0,x,m_0)$ for any $(t_0,x,m_0)\in[0,T]\times\mathbb{T}_{\mathbb{G}}\times \mathcal{P}(\mathbb{T}_{\mathbb{G}})$, namely the uniqueness holds.

The further regularity of the derivative $\frac{\delta U}{\delta m}$ is given by Proposition \ref{Prop_U C^1 w.r.t. m}.

We thereby finish the proof of Theorem \ref{Thm_ME wellposed regularity}.
\end{proof}

\section*{Acknowledgments}

The authors sincerely thank Professor Wilfrid Gangbo for his valuable suggestions, and are grateful to the referees for their careful reading and thoughtful comments.

Yiming Jiang is supported by National Natural Science Foundation of China (Grant No. 12471141). Yawei Wei is supported by National Natural Science Foundation of China (Grant No. 12271269) and Fundamental Research Funds for the Central Universities. All authors contributed equally to this work, and the author list is ordered alphabetically by surname.

\end{document}